\documentclass[10pt]{article}

\usepackage[a4paper, total={5.5in, 10in}]{geometry}
\usepackage{amsfonts,amsmath,amscd,amssymb,stmaryrd,amsthm}
\usepackage{centernot}
\usepackage{cmap}
\usepackage[T1]{fontenc}
\usepackage[utf8]{inputenc}
\usepackage{lmodern}
\usepackage{graphicx,caption}
\usepackage{multirow}
\usepackage[english]{babel}
\usepackage{natbib}
\usepackage{xcolor}
\definecolor{color1}{gray}{1}
\usepackage{todonotes}
\usepackage[colorlinks=false]{hyperref}
\usepackage[capitalize]{cleveref}
\newcommand{\myref}[1]{\hyperref[#1]{\namecref{#1} \ref{#1}}}
\usepackage[normalem]{ulem}

\usepackage{tikz}
\usepackage{tikz-cd}
\usetikzlibrary{calc,positioning,arrows.meta,shapes.geometric,math}
\tikzset{
    buffer/.style={
        draw,
        shape border rotate=-90,
        isosceles triangle,
        isosceles triangle apex angle=60,
        node distance=2cm,
        minimum height=4em
    }
}
\tikzset{>={Latex[width=2mm,length=2mm]}}
\usepackage{ifthen}

\numberwithin{equation}{subsection}
\newtheorem{theorem}{Theorem}
\newtheorem{lemma}[theorem]{Lemma}
\newtheorem{proposition}[theorem]{Proposition}
\newtheorem{corollary}[theorem]{Corollary}
  
\newtheorem{definition}{Definition}
\newtheorem{example}{Example}
\newtheorem*{theorem*}{Theorem}
\newtheorem*{proposition*}{Proposition}
\newtheorem*{corollary*}{Corollary}
\newtheorem*{lemma*}{Lemma}
\theoremstyle{remark}

\newtheorem{question}{Question}
\newtheorem*{question*}{Question}
\newtheorem*{rmk*}{Remark}

\DeclareMathOperator{\id}{id}
\newcommand{\catname}[1]{\mathbf{#1}}

\newcommand{\st}{\, \mid \,}

\newcommand{\set}[1]{\{#1\}}
\newcommand{\sequence}[1]{\langle #1 \rangle}

\usepackage{mathtools}
\newcommand{\defeq}{\vcentcolon=}

\newcommand{\precut}[1]{\mathfrak{c}(#1)}
\newcommand{\cutalgebrasymb}{\mathfrak{C}}
\newcommand{\cutalgebra}[1]{\cutalgebrasymb(#1)}
\newcommand{\presep}[1]{\mathfrak{t}(#1)}
\newcommand{\separationalgebra}[1]{\mathfrak{T}(#1)}
\newcommand{\sepalgebrasymb}{\mathfrak{T}}
\renewcommand{\doteq}{\defeq} 

\renewcommand{\textbf}[1]{\textit{#1}}

\author{Violeta Mar \and Gustavo Boska} 

\newcommand{\Addresses}{{
  \bigskip
  \footnotesize
  G. ~Boska, \textsc{Instituto de Ci\^encias Matem\'aticas e de Computa\c c\~ao, Universidade de S\~ao Paulo\\
	Avenida Trabalhador s\~ao-carlense, 400,  S\~ao Carlos, SP, 13566-590, Brazil}\par\nopagebreak
  \textit{E-mail address}, G.~Boska: \texttt{gustavo.boska@usp.br}

  \medskip

  \footnotesize 
  V. ~Mar, \textsc{Instituto de Ci\^encias Matem\'aticas e de Computa\c c\~ao, Universidade de S\~ao Paulo\\
	Avenida Trabalhador s\~ao-carlense, 400,  S\~ao Carlos, SP, 13566-590, Brazil}\par\nopagebreak
  \textit{E-mail address}, G.~Mar: \texttt{violetamar@usp.br}
}}

\title{Describing ends and tangles (and their edge variants) through Boolean algebras and functors}
\begin{document}
    \newpage
    \maketitle
    \begin{abstract}
The end space of an infinite graph arises naturally in many contexts as an important invariant and an interesting construction. It compactifies a locally finite graph, and in \cite{tangle}, Diestel shows how to extend the end space to a larger space, called the tangle space, which is able to compactify any infinite graph. In both ends and tangles, it is the vertex-connectivity structure of the graph that is being studied. If we switch our attention to edge-connectivity, we can analogously define edge-ends. There is a space known as the edge-direction space which turns out to play an analogous role as the tangle space in its relationship with the end space: the edge-direction space provides a larger compact space in which the not necessarily compact space of edge-ends lives in. In this paper, we make this analogy precise, providing a natural edge analogue definition of tangles and proving they result in exactly the edge-directions. We also describe a combinatorial construction of certain Boolean algebras which give rise, via Stone duality, to the tangle and the edge-direction spaces. Finally, we pursue functorial definitions of the combinatorial constructions used in the paper, inspired by the famously functorial nature of Stone duality and by previous work by one of the authors and colleagues on trying to functorialize the end space construction. We hope our work will provide foundation and inspiration for further work on infinite graph theory that makes ample use of category theory and powerful algebraic constructions such as Boolean algebras and Stone duality.
    \end{abstract}

    \begin{keywords}
        graph,
        end space,
        edge-end space,
        direction space,
        edge-direction space,
        tangle,
        tangle space,
        edge-tangle,
        ultrafilter,
        boolean algebra,
        stone space,
        stone-duality,
        category theory,
        functor 
    \end{keywords}
    
    
    \section*{Introduction}

\paragraph{Ends, tangles and Boolean algebras}The end space of an infinite graph is one of the most ubiquitous constructions in infinite graph theory. It provides not only an important invariant associated with a given graph, but also a method for building interesting topological spaces. It compactifies the (geometric realization of the) graph when it is locally finite. In \cite{tangle}, Diestel shows how to extend the end space to a larger one, called the tangle space, which now is able to compactify (the geometric realization of) any infinite graph. In both ends and tangles, it is the vertex-connectivity structure of the graph that is being studied. If we switch our attention to edge-connectivity, we can analogously define edge-ends, which were explored and studied in \cite{lav}, \cite{real}, \cite{aurichi2024topologicalremarksendedgeend} and \cite{meninos2025}. In the vertex case, the notion of a direction is an equivalent definition of an end. When we turn to its edge analogue, though, the edge-direction space turns out to play an analogous role as the tangle space did for the end space, providing a larger compact space in which the not necessarily compact spaces of edge-ends will be in. In this paper, we make this analogy precise, providing a natural edge analogue definition of tangles and proving they result in exactly the edge-directions. These two spaces, the tangle space and the edge-direction space, were both known to be Stone spaces. The category of Stone spaces is famously equivalent to the category of Boolean algebras - this is the phenomenon known as Stone duality. We describe here a combinatorial construction of the Boolean algebra associated to each of these Stone spaces. This, in essence, provides a new construction of all of these spaces, which was previously only known and used in certain special cases by researchers working in geometric group theory, such as \cite{Dicks1980}. 

In quick summary: removing a finite set of vertices from a graph can result in a disconnected graph. These are usually called (finite) separations. In the same way,  removing a finite set of edges may also disconnect the graph, which in this case is called a (finite) cut. There are natural ways of giving to these combinatorial connectivity constructions the structure of a Boolean algebra. These will give rise to ultrafilter spaces through Stone duality - and it is those spaces which will have canonical relationships with the spaces already explored in the literature of ends, tangles, edge-ends and edge-directions. \\

\paragraph{Morphisms in graph theory} There are many concepts in graph theory that can be formulated in the categorical language of morphisms. For instance, the study of minors is a major subfield (and it is the place where the tangles we study here originally appeared). A minor of a graph $G$ is a graph $M$ obtained by removing vertices, edges or by contracting an edge into a single vertex. The operation of repeated edge contractions can be more generally described by a quotient. A quotient of a graph $G$ induced by a partition $P$ of $V(G)$ into pairwise disjoint subsets is a graph $G / P$ whose vertices are the set of equivalence classes $V(G) / P$ and two of those are connected by an edge when in $G$ there is a pair of adjacent vertices, one from each equivalent class. This construction can be captured by the projection map $\pi:G \to G/P$, which is a function from the set of vertices of $G$ to the set of vertices of $G / P$ such that adjacent vertices get mapped to either adjacent vertices or to a single vertex. A map with that property is called an 1-complex morphism. $\pi$ is also surjective - the existence of a surjective 1-complex morphism from a graph $G$ to a graph $Q$ is, then, equivalent to $Q$ being a quotient graph of $G$. If we further ask that the fibers $\pi^{-1}(v)$ for each $v \in V(Q)$ are connected, then $Q$ is a minor of $G$ obtained purely by edge contractions. The other operations, of removing vertices and edges, can be captured by an injective 1-complex morphism: $M$ is obtained from $G$ by removal of vertices or edges if and only if there is an injective 1-complex morphism $i: M \rightarrow G$. In summary, then, a graph $M$ is a minor of $G$ if and only if there is a graph $M'$ and two 1-complex morphisms $\pi:M' \rightarrow M$ and $i: M' \rightarrow G$, where $\pi$ is surjective with connected fibers and $i$ is injective.\footnote{Surjectivity and injectivity can be described categorically through the notion of terminal objects and point-surjectivity/injectivity - but it's not clear if we can describe `connected fibers' through categorical language.}\\

\paragraph{The end-space construction can be functorialized} In \cite{funcPaper}, one of the authors and colleagues study what kinds of graph morphisms give rise to a continuous map between end spaces - in essence, asking how to turn the end space construction into a functor. One such class, the \textbf{strongly proper morphisms}, is considered in this text. A possible application of this work is to use topological obstructions as arguments against the existence of quotient and minor relationships between graphs.

\paragraph{Some topological obstructions} For any topological space $X$, it is well defined its \textbf{Lindelöff number} $l(X)$, the least cardinal $\kappa$ such that any open covering of $X$ admits a sub-cover of cardinality $<\kappa$. For example, $l(X) = \aleph_0$ is equivalent to $X$ being compact. If there exists a surjective continuous map $f:X \to Y$ between two topological spaces, then $l(Y) \leq l(X)$. This means that given any pair of graphs $G_1,G_2$ with $l(\Omega(G_2)) > l(\Omega(G_1))$\footnote{See \cite{pitz} for a characterization of graphs (special trees) with arbitrarily large Lindelöf number $\kappa$.} and a strongly proper morphism $\phi:G_1 \to G_2$, one must have a $G_2$-ray $r_2$ such that for any $G_1$-ray $r_1$ one can separate $r_2$ from $\phi(r)$ by removing finitely many vertices. This condition is the consequence of $\Omega \phi: \Omega(G_1) \to \Omega(G_2)$ not being surjective. The same argument could be applied, under certain conditions, to the density, (local) weight and cellularity cardinal invariants. This could lead to interesting questions and interplay between combinatorial relationships between graphs and the topological and set-theoretical characteristics of their associated end spaces.

\paragraph{Our functorial goals} Inspired by those ideas, and by the use of the famously functorial Stone duality in constructing tangles and edge-directions (described in the first part of our paper), we pursue further the possibility of functorial definitions of the combinatorial constructions used in this paper. In summary, we ask if we can transform graph morphisms into morphisms between their corresponding line graphs (can we functorialize $L$?) and into Boolean algebra morphisms between their corresponding cut or separation algebras (can we functorialize $\cutalgebrasymb$ and $\sepalgebrasymb$?). And, if we can, can we use those morphisms to build continuous functions between their tangle spaces or their edge-direction spaces (can we functorialize $\Theta$ and $\mathcal{D}_E$?)?

    \subsection*{Preliminaries} 

\newcommand{\seq}[1]{\langle\, #1\, \rangle}

We collect here basic definitions and results we will use throughout the paper, giving references that expand on our brief account which can hopefully aid the unfamiliar reader.

\paragraph{Graph-theoretic conventions} We follow \cite{diestelgraphtheory} for the usual notations and conventions of basic graph theory. All graphs in this text shall be considered simple and undirected, so our edges are sets of two distinct vertices $e = \{v_1,v_2\}$, in which case we say $v_1$ and $v_2$ are \emph{incident} in $e$.

\paragraph{End spaces}
Given a graph $G$, a \emph{$G$-ray} is an infinite path of neighboring vertices $\langle v_1, v_2, \dotsc \rangle$ such that $\{v_i,v_{i+1}\} \in \mathrm{E}(G)$ for $i \in \mathbb{N}$ (by definition, a path doesn't repeat vertices, so a ray does not repeat vertices). Given $n\in\omega$, we say that $\seq{v_{n}, v_{n+1}, \dotsc}$ is a \emph{tail} of the $G$-ray $\seq{v_1, v_2, \dotsc}$. Two rays are said to be \emph{equivalent} if there exists infinitely many disjoint paths connecting both rays. This equivalence $r \sim s $ defines classes $[r]$ called \emph{ends}. We usually refer to $r$ as \emph{inducing} or \emph{representing} $[r]$. The set of $G$-ends is denoted by $\Omega(G)$. 

\begin{figure*}[ht]
\centering
\begin{tikzpicture}
    \def\radius{0.15}
    \def\dy{0.7}
    \def\dx{1}
    
    \draw[->] (-0.5*\dx,\dy)-- (7.5*\dx,\dy) node[right] {$r_1$};
    \draw[->] (-0.5*\dx,-\dy)-- (7.5*\dx,-\dy) node[right] {$r_2$};
    \foreach \i in {0,...,7}
    {
        \draw ({\i*\dx},\dy) -- ({\i*\dx},-\dy) node[midway,right] {$p_\i$};
    }
\end{tikzpicture}
\caption{$r_1$ and $r_2$ represent rays, and the $p_i$ represent either edges or paths between vertices of the rays. A graph like in this image is called a \emph{ladder}, and the two rays $r_1,r_2$ are called the ladder's \textbf{spines}. Two rays in a given graph $G$ are equivalent $s_1 \sim s_2$ exactly when there is a ladder $L \subset G$ with $s_1$ and $s_2$ as spines. }
\label{ladder}
\end{figure*}

There is a natural topology in $\Omega(G)$: given an end $\varepsilon\in \Omega (G)$ and a finite set of vertices $F\subset V(G)$, there exists a unique connected component $C(F,\varepsilon)$ of $G\setminus F$ that contains the tails of all representatives $r \in \varepsilon$. In this case, if we define  \[\Omega(F,\varepsilon)=\lbrace \eta \in \Omega (G) \,:\, C(F, \eta) = C(F, \varepsilon) \rbrace\subset \Omega(G) ,\]
then the collection $\lbrace \Omega (F, \varepsilon) : F\in [\mathrm{V}(G)]^{<\aleph_0}, \varepsilon\in \Omega (G)\rbrace$, where $[\mathrm{V}(G)]^{<\aleph_0}$ denotes the family of finite subsets of $\mathrm{V}(G)$, can be regarded as a basis of the (now topological) \emph{end space}. We say a vertex $v$ \emph{dominates a ray} $r$ if there are infinitely many vertex-disjoint paths from $v$ to $r$, and it \emph{dominates an end} when it dominates one of its representatives. We denote $\mathrm{S}_{G,F}$ as the set (or discrete space) whose elements are  the sets $V(C)$ for each infinite connected component $C$ of $G \backslash F$. 

\begin{figure}[ht]
\centering
\begin{tikzpicture}
    \def\radius{0.15}
    \def\dy{1}
    \def\dx{0.6}
    
        \draw[->] (-0.5*\dx,-\dy)-- (7.5*\dx,-\dy) node[right] {$r$};

        \foreach \i in {0,...,7}
        {
            \draw (3.5*\dx,0) -- ({\i*\dx},-\dy);
        }
        \node[shift = {(\dx,0)}] at (3.5*\dx,0) {$v$};

        \def \x {3.5}
        \foreach \n in {0,...,6}{
            \pgfmathtruncatemacro{\y}{6-\n+1}
            \draw (\x*\dx,0) -- (3.5*\dx,0);
            \draw (\x*\dx,0) -- (3.5*\dx,0);
        }
        
        \draw[fill = white] (3.5*\dx,0) circle (3pt);
\end{tikzpicture}
\caption{A vertex dominating a ray.}
\label{simplecomb}
\end{figure}

\paragraph{Edge-end spaces} An analogue of the usual end space in which \emph{edge-connectivity} rather than vertex-connectivity plays the central role is the notion of \emph{edge-end space}. This construction was introduced in \cite{lav} and has received only limited attention in the literature. We now recall its definition.

Two $G$-rays are said to be \textbf{edge-equivalent} if they cannot be separated by a finite set of edges. The equivalence classes under this relation are called the \textbf{edge-ends} of $G$. We denote the set of all edge-ends by $\Omega_E(G)$. In other words, for rays $r$ and $s$, we write $r \sim_E s$ whenever there is no finite set of edges $F \subset E(G)$ such that in $G \setminus F$ there are two different connected components, one with a tail of $r$ and another with a tail of $s$. The set $\Omega_E(G)$ is the set of equivalence classes of rays under this relation.

In close analogy with the construction of the usual end space, we endow $\Omega_E(G)$ with a natural topology. Let $\varepsilon \in \Omega_E(G)$ and let $F \subseteq E(G)$ be finite. There exists a unique connected component $C(F,\varepsilon)$ of $G \setminus F$ that contains a tail of every ray representing $\varepsilon$. We then define
\[
  \Omega_E(F,\varepsilon)
  =
  \{\, \eta \in \Omega_E(G) \st C(F,\eta)=C(F,\varepsilon) \,\}
\]
The family 
\(
  \{\, \Omega_E(F,\varepsilon) 
  \st F \in [E(G)]^{<\aleph_0},\ \varepsilon \in \Omega_E(G) \,\}
\)
forms a basis for a topology on $\Omega_E(G)$, now called the \textbf{edge-end space} of $G$.


\paragraph{Inverse systems} Recall (following \cite{eng}) that an \emph{inverse system} is a triple \[F \doteq (\{X_p\}_{p \in \mathbb{P}},\{\phi_{p_1p_2}:X_{p_2}\to X_{p_1}\}_{p_1 \leq p_2}, \mathbb{P})\] where $\mathbb{P}$ is an order, $X_p$ is a topological space for each $p \in \mathbb{P}$ and $\phi_{p_1p_2}$ is a continuous map, which is called a \emph{transition} or \emph{attaching} map. We ask $\phi_{pp}= \id_{X_p}$, that $\phi_{p_1p_2} \circ \phi_{p_2p_3} = \phi_{p_1p_3}$ and that $\mathbb{P}$ is an \emph{upward directed order}, meaning that for any pair $p_1,p_2 \in \mathbb{P}$ there is $p_3 \in \mathbb{P}$ such that $p_3 \geq p_1,p_2$. Define the \emph{limit} of this system as
\[ \varprojlim F = 
    \left\{
    (x_p)_{p \in \mathbb{P}} \in \prod_{p \in \mathbb{P}}X_p \st 
    \forall p_1 \leq p_2 
        (\phi_{p_1p_2}(x_{p_2}) = x_{p_1}) 
    \right\}
\]
taken with the subspace topology from the product. Note that the canonical projections $\pi_p:\varprojlim F \to X_p$ provide us a natural sub-basis of the inverse limit, as in $\{\pi_p^{-1}(A)\}_{p \in \mathbb{P},\, A \subset X_p \text{ open }}$. 

\paragraph{Directions} In \cite{DIESTEL2003197}, the following bijection is shown

\[
\Omega(G) \approx 
    \left\{
        \rho \in \prod_{F \in [\mathrm{V}(G)]^{<\aleph_0}} \mathrm{S}_{G,F} 
        \st 
        F \subset F' \in [\mathrm{V}(G)]^{< \aleph_0}
            \implies 
            \rho(F') \subset \rho(F)
    \right\}
\]

The ends, when viewed this way, are called \textit{directions}. It is routine to check that if we exchange $\mathrm{S}_{G,F}$ with \[\tilde{\mathrm{S}}_{G,F} \doteq \{C \in \mathrm{S}_{G,F} \st C \text{ has at least one ray}\}\] we get the same space. 

\paragraph{Star-Comb lemma} A \textbf{comb} is a pair $(r,\{p_n\})$, where $r$ is a ray and $\{p_n\}$ is an infinite collection of pairwise vertex disjoint paths. The ends of these paths $p_n$ are called the comb's \textbf{teeth}. An \textbf{infinite star subdivision}, or simply infinite star, consists of a vertex, called the \emph{center}, and infinitely many pairwise vertex-disjoint paths coming out of this center. The ends of these paths are called the \textbf{tips} of the star. The star-comb lemma claims it's impossible to forbid both of these types of graphs appearing in any connected infinite graph.

\begin{lemma}[Star-comb lemma]
    \label{star-comb}
    Let $U \subset \mathrm{V}(G)$ be an infinite vertex subset of a connected graph $G$. At least one of the following conditions must hold:
    \begin{itemize}
        \item[(i)] There exists a comb with infinitely many teeth in $U$.
        \item[(ii)] There exists a star with infinitely many tips in $U$.
    \end{itemize}
\end{lemma}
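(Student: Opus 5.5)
The plan is the standard argument from \cite{diestelgraphtheory}: first pass to a locally finite or a not locally finite situation inside a spanning tree. Since $G$ is connected, fix a spanning tree $T$ of $G$ and a root $t_0$. Any comb or star found in $T$ is also one in $G$, so it suffices to prove the statement for $T$. For each vertex $t$, write $\lfloor t \rfloor$ for the set of vertices of the subtree above $t$ (the vertices whose path to the root passes through $t$). Call $t$ \emph{$U$-rich} if $\lfloor t\rfloor \cap U$ is infinite. The root is $U$-rich because $U$ is infinite and $\lfloor t_0 \rfloor = V(T)$.

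Next, split into cases according to branching among rich vertices. If $t$ is $U$-rich, then $\lfloor t\rfloor\cap U$ is the union of $\{t\}\cap U$ and the sets $\lfloor c\rfloor\cap U$ over the children $c$ of $t$. So either some child of $t$ is $U$-rich, or $t$ has infinitely many children $c$ with $\lfloor c\rfloor\cap U\neq\emptyset$.

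In the second case we obtain a star. Take as center a rich vertex $t$ with infinitely many children $c_i$ whose subtrees meet $U$, and pick $u_i\in\lfloor c_i\rfloor\cap U$. The tree paths $t c_i \dots u_i$ pairwise meet only in $t$, because the subtrees $\lfloor c_i\rfloor$ are disjoint. This gives an infinite star with infinitely many tips in $U$, which is (ii).

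If the second case never occurs, then every rich vertex has a rich child. Starting at $t_0$, we choose a rich child repeatedly and build a ray $r=t_0t_1t_2\dots$ of rich vertices. This is where the main work lies: extracting infinitely many disjoint teeth. Since each $t_i$ has only finitely many children whose subtrees meet $U$, each $\lfloor t_i\rfloor\cap U$ is infinite, and so a vertex of $U$ lies in $\lfloor t_i\rfloor\setminus \lfloor t_{i+1}\rfloor$ or further up for infinitely many $i$. The precise step is as follows.

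Inductively choose indices $i_1<i_2<\dots$ and vertices $u_n\in U\cap\lfloor t_{i_n}\rfloor$ with $u_n\notin\lfloor t_{i_n+1}\rfloor$. These exist unless all but finitely many $U$-vertices of $\lfloor t_i\rfloor$ lie on $r$ itself. In that case, infinitely many $t_i$ lie in $U$, and we take trivial teeth $u_n=t_{i_n}$. In either case, the tooth $p_n$ is the tree path from $t_{i_n}$ to $u_n$, which leaves $r$ immediately through a child other than $t_{i_n+1}$, or is trivial.

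These paths are pairwise disjoint, since they live in the disjoint sets $\lfloor t_{i_n}\rfloor\setminus\lfloor t_{i_n+1}\rfloor$. Moreover each $p_n$ meets $r$ only in $t_{i_n}$. Hence $(r,\{p_n\})$ is a comb with infinitely many teeth in $U$, which is (i).

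The one step to check carefully is that infinitely many such off-ray $U$-vertices (or on-ray ones) exist. This follows because $\lfloor t_0\rfloor\cap U$ is infinite, while the sets $\lfloor t_i\rfloor\setminus\lfloor t_{i+1}\rfloor$ partition $\lfloor t_0\rfloor\setminus\bigcap_i\lfloor t_i\rfloor$. The intersection $\bigcap_i\lfloor t_i\rfloor$ is empty, since each vertex has finite height.

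The remaining difficulty is that infinitely many of the parts $\lfloor t_i\rfloor\setminus\lfloor t_{i+1}\rfloor$ must meet $U$. This holds because each part meeting $U$ in infinitely many points is impossible: that would force some child $c\neq t_{i+1}$ to be rich, or infinitely many children of $t_i$ to meet $U$. If a child $c\ne t_{i+1}$ is rich, we may simply have chosen $t_{i+1}$ among the rich children arbitrarily, so a part can indeed be infinite. This does not hurt the argument, since one $U$-vertex per part suffices, but it means we need infinitely many parts to meet $U$ rather than a single infinite one.

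To guarantee this, I will choose the ray so that this holds. The fix is to define $r$ by always moving to a rich child, and in the non-star case to argue that $U\cap\lfloor t_i\rfloor$ being infinite for every $i$, together with the finiteness of each $U\cap(\lfloor t_i\rfloor\setminus\lfloor t_{i+1}\rfloor)$, gives infinitely many nonempty parts. If instead some part is infinite, we restart the whole argument inside the rich child $c\neq t_{i+1}$.
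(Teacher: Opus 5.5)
The paper gives no proof of this lemma; it only quotes it from \cite{diestelgraphtheory}. Your spanning-tree argument is the standard one. The reduction to $T$, the star case, the construction of the ray $r=t_0t_1t_2\dots$ of rich vertices, and the disjointness of the teeth are all fine. The teeth are disjoint because they lie in the pairwise disjoint sets $P_i:=\lfloor t_i\rfloor\setminus\lfloor t_{i+1}\rfloor$, each meeting $r$ only in $t_i$. You also correctly note that $\bigcap_i\lfloor t_i\rfloor=\emptyset$.

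The gap is in the step you single out yourself: showing that infinitely many $P_i$ meet $U$.
\begin{itemize}
\item Your first justification uses only that $\lfloor t_0\rfloor\cap U$ is infinite. That is not enough, because a single $P_i$ can swallow infinitely many vertices of $U$, as you observe when $t_i$ has a second rich child.
\item Your repair appeals to ``the finiteness of each $U\cap P_i$'', which you have just seen can fail.
\item It also proposes to ``restart the whole argument inside the rich child $c\neq t_{i+1}$''. This is not a well-defined procedure. Nothing prevents the restart from being triggered again and again, and you would then need to show that the successive partial rays assemble into one ray with infinitely many teeth.
\end{itemize}
As written, the key step of the comb case is therefore not proved.

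The fix is one line and uses only what you already have. Every $t_i$ is rich, so $\lfloor t_i\rfloor\cap U\neq\emptyset$. The sets $\lfloor t_j\rfloor$ decrease and have empty intersection, so $\lfloor t_i\rfloor=\bigcup_{j\ge i}P_j$. Hence some $P_j$ with $j\ge i$ meets $U$. Since $i$ was arbitrary, infinitely many $P_j$ meet $U$, and you can directly choose $i_1<i_2<\cdots$ and $u_n\in U\cap P_{i_n}$. No finiteness of the sets $U\cap P_i$ and no restart are needed.

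Your separate case about $U$-vertices lying on $r$ is also unnecessary. If $t_i\in U$, then $t_i\in U\cap P_i$ is already an admissible choice, giving a trivial tooth.
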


\paragraph{Boolean algebras} The archetypal example of a Boolean algebra is the lattice of subsets of a set $(2^X,\leq,\wedge,\vee,0,1,\neg)$ with order $\subset$, $\vee$-operation $\cup$, $\wedge$-operation $\cap$, minimum $\emptyset$, maximum $X$ and complement operation $\neg A \doteq X \backslash A$. For the general definition and standard results we will mention here, we refer the reader to \cite{boolean}. A \textbf{filter} of a Boolean algebra $\mathfrak{B}$ is an upward closed set $u \subset \mathfrak{B}$, such that $0_\mathfrak{B} \notin u$ and if $A,B \in u$ then $A \wedge B \in u$. An \textbf{ultrafilter} is a maximal filter. For each Boolean algebra $\mathfrak{B}$, $\mathrm{Ult}(\mathfrak{B}) $ is the set of \textbf{ultrafilters $u \subset \mathfrak{B}$}. We give this set a topology via the basic sets
\[
[B] \doteq \{u \in \mathrm{Ult}(\mathfrak{B}) \st B \in u\}
\]
for $B \in \mathfrak{B}$, giving rise to the \textbf{Stone space of this algebra}. A Stone space, in general, is a topological space that is compact, Hausdorff and zero-dimensional. The \textbf{clopen algebra} of a Stone space $X$ is the Boolean algebra of the clopen sets of this space, denoted by $\mathrm{Clopen}(X)$. Boolean morphisms $\phi:\mathfrak{B}_1 \to \mathfrak{B}_2$ give rise to a well defined function $\mathrm{Ult}(\phi):\mathrm{Ult}(\mathfrak{B}_2) \to \mathrm{Ult}(\mathfrak{B}_1)$, $\mathrm{Ult}(\phi)(u) \doteq \phi^{-1}[u]$.

We define $\catname{Bool}$ to be the category whose objects are Boolean algebras and morphisms are homomorphisms - functions from a Boolean algebra to another Boolean algebra that preserve union, intersection and complement. And $\catname{StoneSpace}$ is the category whose objects are Stone spaces and the morphisms are continuous functions.

\begin{theorem}[Stone Duality]
    There is a contra-variant categorical equivalence $\mathrm{Ult}:\catname{Boole} \to \catname{StoneSpace}$ whose inverse functor is $\mathrm{Clopen}:\catname{StoneSpace} \to \catname{Boole}$.
\end{theorem}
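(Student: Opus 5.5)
The plan is the classical proof of Stone duality, organised around the two functors in the statement and the two natural isomorphisms between the identity functors and the composites $\mathrm{Clopen}\circ\mathrm{Ult}$ and $\mathrm{Ult}\circ\mathrm{Clopen}$.

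\emph{Step 1: $\mathrm{Ult}$ is a functor into $\catname{StoneSpace}$.} For a Boolean algebra $\mathfrak{B}$ we show that $\mathrm{Ult}(\mathfrak{B})$, with the basic sets $[B]$, is a Stone space.
\begin{itemize}
\item The sets $[B]$ form a basis: $[A]\cap[B]=[A\wedge B]$.
\item They are clopen, since $\mathrm{Ult}(\mathfrak{B})\setminus[B]=[\neg B]$. This uses that an ultrafilter contains exactly one of $B$ and $\neg B$, which is where maximality enters. Hence the space is zero-dimensional.
\item It is Hausdorff: for $u\neq v$ pick $B\in u\setminus v$. Then $\neg B\in v$, and $[B]$, $[\neg B]$ separate $u$ and $v$.
\item It is compact. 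Suppose a cover by basic sets $[B_i]$ has no finite subcover. Then the elements $\neg B_i$ have the finite intersection property, since $\neg B_{i_1}\wedge\dots\wedge\neg B_{i_n}=0$ would give $[B_{i_1}]\cup\dots\cup[B_{i_n}]$ equal to everything. So they generate a proper filter, and Zorn's lemma extends it to an ultrafilter lying in no $[B_i]$, a contradiction.
\end{itemize}
On morphisms, $\mathrm{Ult}(\phi)^{-1}[B]=[\phi(B)]$, so $\mathrm{Ult}(\phi)$ is continuous. Preimages of ultrafilters under homomorphisms are ultrafilters, using $\phi(\neg B)=\neg\phi(B)$ and $\phi(0)=0$. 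Contravariant functoriality is immediate from how preimages compose.

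\emph{Step 2: $\mathrm{Clopen}$ is a functor into $\catname{Bool}$.} A continuous map $f:X\to Y$ is sent to $f^{-1}:\mathrm{Clopen}(Y)\to\mathrm{Clopen}(X)$. This is a Boolean homomorphism because preimage preserves unions, intersections and complements.

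\emph{Step 3: the algebra side.} The unit is $\eta_{\mathfrak{B}}:\mathfrak{B}\to\mathrm{Clopen}(\mathrm{Ult}(\mathfrak{B}))$, $B\mapsto[B]$.
\begin{itemize}
\item It is a homomorphism by the identities in Step 1.
\item It is injective: if $B\neq 0$, the principal filter of $B$ extends by Zorn to an ultrafilter, so $[B]\neq\emptyset$. This is the prime ideal theorem.
\item It is surjective: a clopen set $C$ is open, hence a union of basic sets $[B_i]$. It is also closed in a compact space, hence compact, so finitely many $[B_i]$ suffice and $C=[B_1\vee\dots\vee B_n]$.
\end{itemize}
Naturality reduces to $\mathrm{Ult}(\phi)^{-1}[B]=[\phi(B)]$.

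\emph{Step 4: the space side.} For a Stone space $X$, define $\varepsilon_X:X\to\mathrm{Ult}(\mathrm{Clopen}(X))$ by $x\mapsto\{C\st x\in C\}$.
\begin{itemize}
\item The image is an ultrafilter, since for each clopen $C$ either $x\in C$ or $x\in X\setminus C$.
\item It is continuous, because $\varepsilon_X^{-1}[C]=C$.
\item It is injective. Hausdorffness together with zero-dimensionality and compactness gives clopen sets separating points: in a compact Hausdorff zero-dimensional space, points are separated by clopen sets, directly from the clopen basis.
\item It is surjective. Given an ultrafilter $u$ of clopens, the sets in $u$ are closed and have the finite intersection property, so by compactness some $x$ lies in all of them. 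Then $\varepsilon_X(x)\supseteq u$, and maximality of $u$ gives equality.
\end{itemize}
A continuous bijection from a compact space to a Hausdorff space is a homeomorphism. Naturality is again a preimage computation.

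The main obstacle is the use of choice in Steps 1 and 3, namely extending filters to ultrafilters via Zorn's lemma. I would isolate this once as a lemma: every proper filter is contained in an ultrafilter, and an ultrafilter contains exactly one of $B$ and $\neg B$. All other steps are routine verifications.
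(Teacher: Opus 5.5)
Your proposal is correct. It is the classical proof: $\mathrm{Ult}$ and $\mathrm{Clopen}$ are shown to be contravariant functors, and $\eta_{\mathfrak{B}}\colon B\mapsto[B]$ and $\varepsilon_X\colon x\mapsto\{C \st x\in C\}$ are shown to be natural isomorphisms. That is exactly what ``inverse functor'' must mean in the statement: a quasi-inverse, since $\mathrm{Clopen}(\mathrm{Ult}(\mathfrak{B}))$ is only isomorphic to $\mathfrak{B}$, not equal to it.

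The paper gives no proof of this statement. It quotes Stone duality in the preliminaries as a standard result and refers the reader to its cited reference on Boolean algebras, so there is no competing route to compare. Citing keeps the preliminaries short. Your self-contained version isolates the one non-constructive ingredient, the Boolean prime ideal theorem. It also produces the concrete identities $[A]\cap[B]=[A\wedge B]$, $\mathrm{Ult}(\mathfrak{B})\setminus[B]=[\neg B]$ and $\mathrm{Ult}(\phi)^{-1}[B]=[\phi(B)]$. The paper later relies on these tacitly: when it works with basic open sets of $\mathrm{Ult}(\separationalgebra{G})$ and $\mathrm{Ult}(\cutalgebra{G})$, and when it regards $\mathrm{Ult}(\mathfrak{B}/\mathcal{I})$ as a subspace of $\mathrm{Ult}(\mathfrak{B})$.

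If you write it out in full, three small points deserve a line each.

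\emph{The lemma that an ultrafilter contains exactly one of $B,\neg B$.} The paper defines an ultrafilter as a maximal filter, so this needs its short proof. If $B\notin u$, maximality forces the filter generated by $u\cup\{B\}$ to be improper. So $C\wedge B=0$ for some $C\in u$, whence $C\le\neg B$ and $\neg B\in u$.

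\emph{Injectivity of $\eta_{\mathfrak{B}}$.} The fact that $B\neq 0$ implies $[B]\neq\emptyset$ gives injectivity only after you apply it to $A\,\Delta\,B$, using that $\eta_{\mathfrak{B}}$ is a homomorphism.

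\emph{Separation of points in Step 4.} Separating points by clopen sets needs only $T_1$ together with a clopen basis. Compactness plays no role at that step.
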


An ideal $\mathcal{I} \subset \mathfrak{B}$ is a `dual' notion of filter. Specifically, it is a subset of $\mathfrak{B}$ that is downward closed, $1_\mathfrak{B} \notin \mathcal{I}$ and $A\vee B \in \mathcal{I}$ whenever $A,B \in \mathcal{I}$. Now if we define $A \sim B$ when their symmetric difference \[A \Delta B \doteq (A \wedge \neg B) \vee (\neg A \wedge B) \in \mathcal{I},\] then $\sim$ is an equivalence relation and $\mathfrak{B}/\mathcal{I} \doteq \mathfrak{B}/\sim$ inherits the structure of a Boolean algebra. 

\begin{proposition}
    \label{quotient}
    The quotient $\pi_\mathcal{I}:\mathfrak{B} \to \mathfrak{B}/\mathcal{I}$ is a Boolean algebra homomorphism and is taken, through the ultrafilter functor, into a subspace inclusion \[\mathrm{Ult}(\pi_\mathcal{I}) = \iota_{\mathrm{Ult}(\mathfrak{B}/\mathcal{I})} \text{ where } \mathrm{Ult}(\mathfrak{B}/\mathcal{I}) \subset \mathrm{Ult}(\mathfrak{B})\] is the subspace of ultrafilters that do not contain any $I \in \mathcal{I}$.
\end{proposition}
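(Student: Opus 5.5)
The proof has two parts. First, $\pi_\mathcal{I}$ is a Boolean homomorphism. Second, $\mathrm{Ult}(\pi_\mathcal{I})$ is a homeomorphism of $\mathrm{Ult}(\mathfrak{B}/\mathcal{I})$ onto the subspace $U_\mathcal{I} \doteq \{u \in \mathrm{Ult}(\mathfrak{B}) \st u \cap \mathcal{I} = \emptyset\}$.

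For the first part, I would check that $\sim$ is a congruence. Reflexivity uses $0 \in \mathcal{I}$, which holds because $\mathcal{I}$ is downward closed and, we may assume, nonempty. Symmetry is immediate. Transitivity follows from $A \Delta C \leq (A\Delta B)\vee(B \Delta C)$ together with downward closure and closure under $\vee$. Compatibility with the operations follows from the standard inequalities
\[
(A\vee C)\Delta(B\vee D) \leq (A\Delta B)\vee(C\Delta D), \qquad \neg A \Delta \neg B = A\Delta B,
\]
and $\wedge$ is handled by De Morgan. The operations on $\mathfrak{B}/\mathcal{I}$ are then defined on representatives, so $\pi_\mathcal{I}$ preserves them by construction. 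Note also that $[A]=0$ iff $A\in\mathcal{I}$, and $1\notin\mathcal{I}$ guarantees the quotient is nontrivial.

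For the second part, recall that $\mathrm{Ult}(\pi_\mathcal{I})(v) = \pi_\mathcal{I}^{-1}[v]$; the preimage of an ultrafilter under a homomorphism is an ultrafilter. I would proceed as follows.
\begin{enumerate}
\item \emph{The image lies in $U_\mathcal{I}$.} If $I \in \pi_\mathcal{I}^{-1}[v]$ with $I\in\mathcal{I}$, then $0 = \pi_\mathcal{I}(I) \in v$, which is impossible.
\item \emph{Injectivity.} Since $\pi_\mathcal{I}$ is surjective, $v = \pi_\mathcal{I}[\pi_\mathcal{I}^{-1}[v]]$, so $v$ is recovered from its preimage.
\item \emph{Every $u\in U_\mathcal{I}$ is hit.} Given such $u$, set $v = \pi_\mathcal{I}[u]$. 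The key point is that $u$ is saturated: if $A\in u$ and $A\sim B$, then $A\wedge\neg B\in\mathcal{I}$, so $A\wedge\neg B \notin u$, hence $\neg B \vee \neg A \notin u$ as well (otherwise intersecting with $A$ gives $A \wedge \neg B \in u$). Thus $A\wedge B\in u$ and so $B\in u$. Saturation gives $\pi_\mathcal{I}^{-1}[v] = u$, and it also shows $v$ is an ultrafilter: $0\notin v$ because $u$ misses $\mathcal{I}$, closure under meets and upward closure transfer along $\pi_\mathcal{I}$, and for each $A$, exactly one of $[A], \neg[A]$ lies in $v$.
\item \emph{Topology.} The map $\mathrm{Ult}(\pi_\mathcal{I})$ is continuous by Stone duality, or directly because $\mathrm{Ult}(\pi_\mathcal{I})^{-1}[A] = [\pi_\mathcal{I}(A)]$. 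It is injective from a compact space into a Hausdorff space, hence an embedding onto its image $U_\mathcal{I}$. Alternatively, one can check directly that basic opens correspond: $[\pi_\mathcal{I}(A)]$ maps onto $[A]\cap U_\mathcal{I}$.
\end{enumerate}

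The main obstacle is the saturation argument in step 3, which is what makes $\pi_\mathcal{I}[u]$ an ultrafilter whose preimage is exactly $u$. Everything else is routine verification.
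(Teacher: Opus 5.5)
The paper gives no proof of this proposition. It is listed in the preliminaries as standard background on Boolean algebras, with a pointer to the literature, so there is no argument of the paper's to compare yours against.

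Your proof is correct and complete, and it is the standard one:
\begin{itemize}
\item you check that $\sim$ is a congruence;
\item you then identify $\mathrm{Ult}(\mathfrak{B}/\mathcal{I})$ with the ultrafilters of $\mathfrak{B}$ that miss $\mathcal{I}$, via the mutually inverse assignments $v \mapsto \pi_\mathcal{I}^{-1}[v]$ and $u \mapsto \pi_\mathcal{I}[u]$;
\item the only substantive point is the saturation of such $u$, and you correctly single it out.
\end{itemize}

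Two minor remarks.
\begin{itemize}
\item The saturation step can be shortened. From $A \in u$ and $A \wedge \neg B \notin u$, maximality of $u$ forces $B \in u$ directly, since $\neg B \in u$ would put $A \wedge \neg B$ in $u$.
\item In step 4, the direct verification $\mathrm{Ult}(\pi_\mathcal{I})\bigl[[\pi_\mathcal{I}(A)]\bigr] = [A] \cap U_\mathcal{I}$ is the cleaner of your two options. It avoids appealing to compactness of $\mathrm{Ult}(\mathfrak{B}/\mathcal{I})$. It also makes visible that $U_\mathcal{I} = \bigcap_{I \in \mathcal{I}} [\neg I]$ is a closed subspace, which is how the paper later uses the result, for example when regarding $\mathcal{T}(G) \subset \widetilde{\mathcal{T}}(G)$.
\end{itemize}
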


To illustrate the above proposition,  consider the following example.

\begin{example}
    \label{quotientisremoval}
    Let $\mathcal{F} \subset 2^{X}$ be the ideal of finite sets of $X$. Then $\mathrm{Ult}(\mathcal{P}(X)/\mathcal{F})$ is the subspace of ultrafilters that do not select any finite set - therefore, these are the non-principal ultrafilters of $X$.
\end{example}




    \section{Tangles and the separation algebra}


\newcommand{\tangle}[1]{\Theta(#1)}
\newcommand{\almostequal}{\stackrel{*}{=}}

\subsection{The tangle space}

Tangles are, similarly to directions, an abstraction that tries to capture the concept of pointing towards `well connected' regions of a graph (i.e.: connected, $k$-connected, $k$-blocks and brambles). This will not be done by defining a specific region (a subset of vertices supposed to be the well connected region, for instance), but by a method of pointing towards a direction in which most of the well connected region is supposed to be in whenever the graph is disconnected by removing a certain number of vertices. If this way of pointing is `consistent' (in a sense to be defined), we call it a tangle.  Let's present the precise definition.

\paragraph{Tangles}

A \textit{separation} $\{A,B\}$ of a graph $G$ is composed of two vertex-sets $A,B \subset \mathrm{V}(G)$ with $\mathrm{V}(G) = A \cup B$ such that every edge is either in $G[A]$ or in $G[B]$. The \textit{separator} of a separation $\{A,B\}$ is $S \doteq A \cap B$, and the \textit{order} of a separation $\{A,B\}$ is the cardinal $\kappa = |S|$.

\begin{figure}[ht]
\centering
\begin{tikzpicture}[scale = 0.5]
    \draw (0,0) circle (2) node {$A$};
    \draw (3,0) circle (2) node {$B$};

    \draw[dashed] (0,1.7) -- (3,1.7) node[midway,above] {Forbidden edge};

    \node at (1.5,0) {$S$};
\end{tikzpicture}
\caption{Separation $(A,B)$ with separator $S$.}
\end{figure} 

An \textit{orientation of the (finite) separation} $\{A,B\}$ is an order choice \[\sigma \in \{(A,B),(B,A)\}\] for the separation. A $\kappa$\textit{-orientation} $\tau$ is a set that has exactly one orientation for each separation of order $<\kappa$. We intuitively interpret $(A,B)$ as an arrow pointing from $A$ to $B$, and we say $\tau$ picks $B$ when $(A,B) \in \tau$.

\begin{figure}[ht]
\centering
\begin{tikzpicture}[scale = 0.5]
    \draw[fill,teal] (3,0) circle (2.4);
    \node[teal,right] at (6,0) {$B$};
    \draw[fill,white] (0,0) circle (2.4); 
    \draw[teal] (3,0) circle (2.4);
    \node[magenta,left] at (0,0) {$A$};
    \draw[magenta] (0,0) circle (2.4);

    \draw[->] (0,2) -- (3,2) node[midway,above] {$\tau$};

    \node at (1.5,0) {$<\kappa$};
\end{tikzpicture}
\caption{An oriented $\kappa$-separation of a $\kappa$-orientation $(A,B) \in \tau$.}
\end{figure}

We say $n$ oriented separations $(A_1,B_1), \dots, (A_n, B_n)$ form a \textit{contradiction} if $G[A_1] \cup \dots \cup G[A_n] = G$. We say $n$ is the \textit{order} of this contradiction. A $k$-\textit{tangle} $\tau$ is a $k$-orientation with no contradictions of order $1$, $2$ or $3$. We define $\Theta_\kappa(G)$ as the set of $\kappa$-tangles of $G$ and if no $\kappa$ is specified, we will consider it $\Theta(G) \doteq \Theta_{\aleph_0}(G)$ as the \textbf{tangles of $G$}.

\begin{example}
   Let $G$ be a connected graph.
    \begin{itemize}
        \item[(i)] Let $v \in \mathrm{V}(G)$ and $\{ \{v\},\mathrm{V}(G)\}$ be a separation of order $1$, then $(\mathrm{V}(G),\{v\})\notin \tau$ for any 2-tangle.
        \item[(ii)] Let $S$ be an an infinite star with center $c$ and tips $n \in \mathbb{N}$. Take a separation of finite order $\{A,B\}$ where neither of the sets are the whole of $V(S)$.  It is routine to see that we must have $\{c \} \subset A \cap B$. Let $\mathcal{U}$ be an ultrafilter over $\mathbb{N}$, and define $\tau_{\mathcal{U}} \doteq \{(A,B) \st B \setminus \{c\} \in \mathcal{U}\}$. If we were to have a contradiction $(A_1,B_1), \dots,(A_n,B_n) \in \tau$ then $B_1 \cap B_2 \cap \cdots \cap B_n = \emptyset \in \mathcal{U}$, which is not possible. So, any ultrafilter over $\mathbb{N}$ gives rise to a unique tangle, and we can, reciprocally, show that any tangle arises uniquely from a ultrafilter in this way.
    \end{itemize}
\end{example}

Let's collect some lemmas from \cite{tangle} we will use.

\begin{lemma}[1.2 in \cite{tangle}]
\label{intersectionclosure}
    \sloppy Let $\tau$ be a $k$-tangle and $(A_1,B_1), (A_2,B_2) \in \tau$. If $(A_1\cup A_2) \cap B_1 \cap B_2$ has order $< k$, then $(A_1\cup A_2, B_1 \cap B_2) \in \tau$.
\end{lemma}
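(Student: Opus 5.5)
Since $\tau$ is a $k$-orientation and $\{A_1\cup A_2, B_1\cap B_2\}$ has order $<k$, $\tau$ contains exactly one of $(A_1\cup A_2, B_1\cap B_2)$ or $(B_1\cap B_2, A_1\cup A_2)$. I will assume the second and derive a contradiction of order $3$ from $(A_1,B_1)$, $(A_2,B_2)$ and $(B_1\cap B_2, A_1\cup A_2)$. Before that, I check that $\{A_1\cup A_2, B_1\cap B_2\}$ really is a separation, so that $\tau$ must orient it.

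\emph{Step 1: it is a separation.} The two sets cover $\mathrm{V}(G)$: a vertex outside $A_1\cup A_2$ lies in $B_1$ and in $B_2$, because $A_i\cup B_i=\mathrm{V}(G)$. For the edges, take $e$ not contained in $G[A_1\cup A_2]$. Then $e$ is not in $G[A_1]$, so $e\in G[B_1]$. Likewise $e$ is not in $G[A_2]$, so $e\in G[B_2]$. Hence both endpoints of $e$ lie in $B_1\cap B_2$. The separator is $(A_1\cup A_2)\cap B_1\cap B_2$, whose size is $<k$ by hypothesis.

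\emph{Step 2: the contradiction.} Suppose $(B_1\cap B_2, A_1\cup A_2)\in\tau$. I claim
\[
G[A_1]\cup G[A_2]\cup G[B_1\cap B_2]=G .
\]
On vertices this holds by the covering argument of Step 1. For an edge $e$, either $e\in G[A_1]$, or $e\in G[A_2]$, or, as shown above, $e\in G[B_1]\cap G[B_2]=G[B_1\cap B_2]$. So the three oriented separations $(A_1,B_1)$, $(A_2,B_2)$, $(B_1\cap B_2, A_1\cup A_2)$ form a contradiction of order $3$. This is forbidden in a $k$-tangle. Hence $(A_1\cup A_2, B_1\cap B_2)\in\tau$.

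The only delicate point is the edge bookkeeping: a contradiction is defined as a union of induced subgraphs, so I must verify that edges, not just vertices, are covered. That is exactly the case analysis reused from Step 1. If two of the three separations coincide, for example when $(A_1,B_1)=(A_2,B_2)$, the argument still works, since the condition then reduces to a contradiction of order $2$ or $1$, which is also excluded.
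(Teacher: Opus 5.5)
Your proof is correct and is essentially the paper's argument. Both show that anything outside $G[A_1]\cup G[A_2]$ lies in $G[B_1\cap B_2]$, which makes $\{A_1\cup A_2, B_1\cap B_2\}$ a separation of order $<k$ and makes $(A_1,B_1)$, $(A_2,B_2)$, $(B_1\cap B_2, A_1\cup A_2)$ a contradiction of order $3$, so $(A_1\cup A_2, B_1\cap B_2)\in\tau$ is forced; you just spell out the edge bookkeeping and the degenerate case that the paper leaves implicit.
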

\begin{proof}
    Every vertex or edge that is not in the union of the induced subgraphs $G[A_1 ] \cup G[A_2]$ must be in $G[B_1 \cap B_2]$, which shows that $(A_1\cup A_2, B_1 \cap B_2)$ really does define a separation (of order $< k$ by hypothesis). It also shows that $(B_1 \cap B_2, A_1 \cup A_2), (A_1,B_1),(A_2,B_2)$ form a contradiction, and therefore $(B_1 \cap B_2, A_1 \cup A_2) \notin \tau$, which is the same as saying that $(A_1\cup A_2, B_1 \cap B_2) \in \tau$.
\end{proof}

\begin{lemma}[Lemma 1.10 of \cite{tangle}]
    \label{almostequalseparations}
    Let $\tau$ be a $\aleph_0$-tangle. If $(A,B) \in \tau$ and $A \sim A'$ and $B \sim B'$ (in other words, the symmetric difference of $A$ and $A'$, and the symmetric difference of $B$ and $B'$ are both finite), then $(A',B') \in \tau$.
\end{lemma}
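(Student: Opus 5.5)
The plan is to reduce the statement to a finite number of applications of \myref{intersectionclosure} together with the absence of contradictions of order at most $3$. The approach is to modify $(A,B)$ one vertex at a time, so I will first handle the cases $A' = A \cup \{v\}$ or $A' = A \setminus \{v\}$ (and symmetrically for $B$), and then iterate. Since $A \Delta A'$ and $B \Delta B'$ are finite, only finitely many such steps are needed. Throughout I assume $\{A',B'\}$ is itself a separation of finite order, since otherwise $(A',B')$ is not a candidate element of $\tau$. Note that $|A'\cap B'|$ is then automatically finite: $A'\cap B' \subset (A\cap B) \cup (A\Delta A') \cup (B \Delta B')$.

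Rather than going vertex by vertex, I would argue directly with a contradiction, which avoids tracking whether the intermediate pairs are separations. Let $D \doteq (A \Delta A') \cup (B \Delta B') \cup (A \cap B)$; this is a finite set. Suppose toward a contradiction that $(B',A') \in \tau$. Consider the separation $\{V(G), D'\}$ with $D' \doteq (A\cap B') \cup (B \cap A') \cup D$, or more simply the small separation $(\mathrm{V}(G), D)$. Its order is $|D| < \aleph_0$, and by the first example a tangle can never contain $(\mathrm{V}(G), X)$ for $X$ finite (that would be a contradiction of order $1$), so $(D, \mathrm{V}(G)) \in \tau$. We would now like $(A,B)$, $(B',A')$ and $(D,\mathrm{V}(G))$ to form a contradiction of order $3$, i.e.\ $G[A] \cup G[B'] \cup G[D] = G$. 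I would check this as follows. A vertex $x$ is not in $A \cup B'$ only if $x \in B \setminus A$ and $x \notin B'$, which forces $x \in B \Delta B' \subset D$. For an edge $xy$, if it lies in neither $G[A]$ nor $G[B']$, then since $\{A',B'\}$ is a separation the edge lies in $G[A']$, and since $\{A,B\}$ is a separation it lies in $G[B]$. If both ends are in $D$ we are done. Otherwise some end, say $x$, is outside $D$, so $x$ is on the same side for both separations and $x \notin A\cap B$. If $x\in A$, then $x \notin B$, which contradicts $xy \in G[B]$. If $x \notin A$, then $x \notin A'$, which contradicts $xy\in G[A']$.

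The step where this can fail is an edge with one end $x \notin D$ and the other end $y \in D$. Here I must check the case analysis more carefully, since the argument above only used the end $x$ and never needed $y$ to be outside $D$. It only needs that $x$ itself lies in $A \cap B$, $A\setminus B$, or $B \setminus A$ consistently with $A'$ and $B'$. This is where the main care is needed: $x\notin D$ gives $x\in A \iff x \in A'$ and $x \in B \iff x\in B'$, and $x\notin A\cap B$. With this, the contradiction is established, so $(B',A')\notin\tau$. Because $\tau$ orients every finite-order separation, it follows that $(A',B')\in\tau$.
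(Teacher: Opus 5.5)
Your argument is correct and complete. With $D=(A\Delta A')\cup(B\Delta B')\cup(A\cap B)$ finite, $(D,\mathrm{V}(G))\in\tau$ because $(\mathrm{V}(G),D)$ alone is a contradiction of order $1$. Your case check does give $G[A]\cup G[B']\cup G[D]=G$:
\begin{itemize}
\item a vertex outside $A\cup B'$ lies in $B\setminus B'\subseteq D$;
\item an edge lying in neither $G[A]$ nor $G[B']$ is in $G[B]\cap G[A']$, and an end $x\notin D$ of such an edge would be forced into $A\cap B\subseteq D$.
\end{itemize}
So $(A,B),(B',A'),(D,\mathrm{V}(G))$ is a forbidden contradiction of order $3$, and $(A',B')\in\tau$ follows.

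The paper gives no proof of this lemma; it is simply imported from \cite{tangle}. There is therefore no in-text argument to compare with, and yours supplies a self-contained one. You are right to state explicitly that $\{A',B'\}$ must be assumed to be a separation, which the statement leaves implicit; finite order then follows as you note. Your contradiction in fact only uses finiteness of $A'\setminus A$, $B\setminus B'$ and $A\cap B$.

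If you want to tie it to the paper's tools, the same argument factors through \myref{intersectionclosure}. From $(A,B)$ and $(D,\mathrm{V}(G))$ one gets $(A\cup D,B)\in\tau$, since its separator lies in $D$. Then $(A\cup D,B),(B',A')$ is already a contradiction of order $2$. Your single order-$3$ contradiction just inlines that step.

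For the write-up, delete the abandoned vertex-by-vertex plan and the superfluous set $D'$. Also drop the third paragraph, which only re-verifies the case analysis you had already completed.
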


\paragraph{Tangle spaces of graphs as Stone spaces} We follow \cite{tangle} to give the tangles of a graph a topological structure, which will turn out to be a Stone space. Consider the partial order $[\mathrm{V}(G)]^{< \aleph_0}$ of the finite vertex-sets, ordered by inclusion. For each set $F$, we assign the space $F \mapsto \Phi(F) \defeq \mathrm{Ult}(2^{\mathrm{S}_{G,F}})$, the space of ultrafilters of the set of infinite connected components of $G \backslash F$. Now, we specify the bonding maps of the inverse system of Stone spaces we want to construct. When $F \subset F'$ we define $f_{F,F'}:\Phi(F') \to \Phi(F)$ as
\[
u \in \mathrm{Ult}(2^{\mathrm{S}_{G,F'}}) \mapsto f_{F,F'}(u) \doteq \{C \in \mathrm{S}_{G,F} \st \exists C' \in u \, (C' \subset C)\}
\]
\begin{figure}[ht]
\centering
\begin{tikzpicture}[scale = 0.5,xscale = 1.6]
  \def\BigHalf{4.5}      
  \def\OrangeHalf{1.7}   
  \def\GreenHalf{0.75}   

  \def\CircleR{1.6}     
  \def\CircleCx{2.9}     
  \def\CircleCy{0.3}     

  \draw[line width=3pt, draw=black]
    (-0.8*\BigHalf,-0.8*\BigHalf) rectangle (1.2*\BigHalf,1.2*\BigHalf);

  \draw[line width=2pt, draw=orange!90!black]
    (-\OrangeHalf,-\OrangeHalf) rectangle (\OrangeHalf,\OrangeHalf) node[above,orange!90!black] {$F'$};

  \draw[line width=2pt, draw=green!45!black]
    (-\GreenHalf,-\GreenHalf) rectangle (\GreenHalf,\GreenHalf) node[above,green!45!black] {$F$};

  \draw[line width=2pt, draw=green!45!black]
    (\CircleCx,\CircleCy) circle (\CircleR);

\node[above,green!45!black] at (3.4,2) {$C \in f_{F,F'}(u)$};

  \foreach \p in {(3.35,1.20),(3.75,0.25),(3.30,-0.95)}{
    \draw[line width=1.8pt, draw=orange!90!black]
      \p circle (0.28);
  }

  \node[left,orange!90!black] at (3.5,0.25) {$C' \in u$};

\end{tikzpicture}
\caption{A schematic depiction of the attachment map.}
\end{figure}

Now, \[\Phi_G \doteq \left( \{F \mapsto \mathrm{Ult}(2^{\mathrm{S}_{G,F}})\}_{F \in [\mathrm{V}(G)]^{< \aleph_0}},\{f_{F,F'}\}_{F \subset F' \in [\mathrm{V}(G)]^{< \aleph_0}}\right)\]

defines an inverse system. Its inverse limit has a natural bijection to the set of tangles of a graph.

\begin{theorem}[\cite{tangle}'s theorem 2]
    \label{inverselimitcharact}
    For any graph $G$, we have a canonical correspondence \[\Theta(G) \leftrightarrow \varprojlim \Phi_G.\]
\end{theorem}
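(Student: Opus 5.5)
We construct explicit maps in both directions and check that they are mutually inverse. Given a tangle $\tau \in \Theta(G)$ and a finite $F \subset \mathrm{V}(G)$, we define
\[
u_\tau(F) \doteq \left\{ \mathcal{C} \subset \mathrm{S}_{G,F} \st \left(\mathrm{V}(G)\setminus \textstyle\bigcup\mathcal{C},\ F \cup \textstyle\bigcup \mathcal{C}\right) \in \tau \right\}.
\]
This is well defined because for any family $\mathcal{C}$ of infinite components, the pair $\{F\cup\bigcup\mathcal{C},\ \mathrm{V}(G)\setminus\bigcup\mathcal{C}\}$ is a separation with separator $F$: there are no edges between distinct components of $G\setminus F$. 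Conversely, given a thread $(u_F)_F \in \varprojlim \Phi_G$, we define
\[
\tau_u \doteq \left\{ (A,B) \st \{C \in \mathrm{S}_{G,A\cap B} \st C \subset B\} \in u_{A\cap B} \right\}.
\]
Here every component of $G\setminus(A\cap B)$ lies entirely in $A\setminus B$ or in $B\setminus A$.

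The first step is to check that $u_\tau(F)$ is an ultrafilter on $2^{\mathrm{S}_{G,F}}$.
\begin{itemize}
\item Exactly one of $\mathcal{C}$ and $\mathrm{S}_{G,F}\setminus\mathcal{C}$ is selected, since $\tau$ orients each separation once. The only discrepancy is the finite components of $G\setminus F$, and Lemma \ref{almostequalseparations} lets us ignore them.
\item $\emptyset \notin u_\tau(F)$. Up to finitely many vertices, the empty family corresponds to $(\mathrm{V}(G),F)$, and this lies outside $\tau$ because it is a contradiction of order $1$ (again using Lemma \ref{almostequalseparations}).
\item Closure under intersection follows from Lemma \ref{intersectionclosure}. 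The separation obtained for $\mathcal{C}_1\cap\mathcal{C}_2$ still has separator contained in $F$, so its order is finite.
\item Upward closure follows from the fact that a tangle contains no pair forming an order-$2$ contradiction.
\end{itemize}

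The second step is compatibility with the bonding maps, namely $f_{F,F'}(u_\tau(F')) = u_\tau(F)$ for $F\subset F'$. Each component $C'$ of $G\setminus F'$ sits inside a unique component of $G\setminus F$, apart from finitely many vertices of $F'\setminus F$. So if $\mathcal{C}\in u_\tau(F)$, the family of components $C'\in\mathrm{S}_{G,F'}$ contained in $\bigcup\mathcal{C}$ gives a separation almost equal to the one given by $\mathcal{C}$. By Lemma \ref{almostequalseparations} that family lies in $u_\tau(F')$, and therefore $\mathcal{C}\in f_{F,F'}(u_\tau(F'))$. Both sides are ultrafilters, so the inclusion $u_\tau(F)\subset f_{F,F'}(u_\tau(F'))$ forces equality.

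The third step, which I expect to be the main obstacle, is showing that $\tau_u$ is a tangle. It is clearly an $\aleph_0$-orientation, since $u_{A\cap B}$ is an ultrafilter and selects exactly one side of the split of $\mathrm{S}_{G,A\cap B}$. To rule out contradictions of order at most $3$, take $(A_1,B_1),(A_2,B_2),(A_3,B_3)\in\tau_u$ and set $F = \bigcup_i (A_i\cap B_i)$. By compatibility, each $u_{A_i\cap B_i}$ is the image of $u_F$. Hence for each $i$ the family of components of $G\setminus F$ contained in $B_i\setminus A_i$ belongs to $u_F$, so the three families have a common member $C$. This $C$ is an infinite component lying outside every $A_i$, so $G[A_1]\cup G[A_2]\cup G[A_3]\neq G$ and there is no contradiction. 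The delicate points are the finite components and the case where a component of $G\setminus F$ meets the separator of a coarser set. I would handle both by refining to $F$ and applying Lemma \ref{almostequalseparations} as before.

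Finally, we check that the two constructions are mutually inverse. We have $u_{\tau_u}(F)=u_F$ by unwinding the definitions, since the separation attached to $\mathcal{C}$ has separator $F$. We also have $\tau_{u_\tau}=\tau$, because every $(A,B)$ is almost equal to the separation determined by the components of $G\setminus(A\cap B)$ lying in $B$, and Lemma \ref{almostequalseparations} again absorbs the finite difference.
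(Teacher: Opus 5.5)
\paragraph{There is a genuine gap at the finite components.} The paper does not prove this statement; it only cites Diestel. Your argument therefore has to stand on its own, and it breaks exactly where you set the finite components aside. Lemma \ref{almostequalseparations} absorbs only a \emph{finite} symmetric difference. But $G\setminus F$ can have infinitely many finite components, for instance when a vertex of $F$ carries infinitely many pendant leaves, and then their union is infinite. A tangle can then point away from $\bigcup\mathcal{C}$ and away from $\bigcup(\mathrm{S}_{G,F}\setminus\mathcal{C})$ at the same time, that is, into the finite components. In that case neither family lies in $u_\tau(F)$.

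The infinite star with $F=\{c\}$ is the extreme case. There are no infinite components, so $\Phi(\{c\})=\mathrm{Ult}(2^{\emptyset})=\emptyset$ and hence $\varprojlim\Phi_G=\emptyset$. Yet the paper's own example gives $\Theta(G)\cong\beta\mathbb{N}\setminus\mathbb{N}\neq\emptyset$. The same unjustified ``almost equal'' claim reappears in two further places:
\begin{itemize}
\item in your Step 2, where deleting $F'\setminus F$ can shatter a component of $G\setminus F$ into infinitely many finite pieces;
\item in your final check $\tau_{u_\tau}=\tau$.
\end{itemize}
So if $\mathrm{S}_{G,F}$ is read literally as the set of infinite components, as in the preliminaries, the correspondence is false. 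Only the thread-to-tangle half of your argument survives.

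\paragraph{The repair.} Use Diestel's convention: let $\Phi(F)$ be the space of ultrafilters on the set $\mathcal{K}_F$ of \emph{all} components of $G\setminus F$. This creates no spurious threads. A thread cannot be principal at a finite component $C$, because no component of $G\setminus(F\cup C)$ lies inside $C$. With this convention your two maps are exactly the right ones, and the proof becomes simpler:
\begin{itemize}
\item The families $\mathcal{C}$ and $\mathcal{K}_F\setminus\mathcal{C}$ give precisely the two orientations of a single separation.
\item $\emptyset\notin u_\tau(F)$ because $(\mathrm{V}(G),F)$ is itself a contradiction of order $1$.
\item Closure under intersection follows from Lemma \ref{intersectionclosure} with no error term.
\item Lemma \ref{almostequalseparations} is needed only in Step 2. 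Write $g$ for the map sending a component of $G\setminus F'$ to the component of $G\setminus F$ containing it. Then $\bigcup g^{-1}(\mathcal{C})=\bigcup\mathcal{C}\setminus F'$, so the two separations differ only inside the finite set $F'$.
\item $\tau_{u_\tau}=\tau$ holds exactly, because $B=(A\cap B)\cup\bigcup\{C\in\mathcal{K}_{A\cap B} \mid C\subseteq B\}$.
\end{itemize}
Your Step 3 is correct as written, and the points you flag there as delicate are not. A component of $G\setminus F$ cannot meet $A_i\cap B_i\subseteq F$. You also only need the common component to be nonempty, not infinite.
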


Observe that this automatically gives $\Theta(G)$ the structure of a Stone space: by Tychonoff, the (non-empty) product $\prod_{F \in [\mathrm{V}(G)]^{<\aleph_0}}\mathrm{Ult}(2^{S_{G,F}})$ of compact Hausdorff spaces is a (non-empty) compact Hausdorff space. The inverse limit is a closed subset of the (compact Hausdorff) product, therefore is itself a compact Hausdorff space. Zero dimensionality is also inherited via the subspace relationship. 

The tangle space is a natural extension of the end space, which is only different from it in the non locally finite case. In summary

\begin{proposition}[\cite{tangle}]
    $\Theta(G)$ is a compact, zero dimensional Hausdorff space, i.e., a Stone space. There is a topological inclusion $\Omega(G) \rightarrow \Theta(G)$. This is a homeomorphism when $G$ is locally finite.
\end{proposition}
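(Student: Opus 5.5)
The plan is to deduce everything from \cref{inverselimitcharact}, transferring to $\Theta(G)$ the topology of $\varprojlim \Phi_G$.

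\emph{Stone space.} Each $\Phi(F) = \mathrm{Ult}(2^{\mathrm{S}_{G,F}})$ is a Stone space by Stone duality.

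\begin{itemize}
\item The maps $f_{F,F'}$ are continuous, since they are the images under $\mathrm{Ult}$ of the Boolean morphisms $2^{\mathrm{S}_{G,F}} \to 2^{\mathrm{S}_{G,F'}}$ that send $\mathcal{A}$ to $\{C' \st C' \subset C \text{ for some } C\in\mathcal{A}\}$. This is well defined because every infinite component of $G\setminus F'$ lies in a unique component of $G\setminus F$, and that component is infinite.
\item The inverse limit is then a closed subset of the product: it is an intersection of equalizers of continuous maps into Hausdorff spaces. Closed subspaces of compact Hausdorff zero-dimensional spaces keep all three properties.
\item Non-emptiness is not part of the claim. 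If needed, it follows from compactness, because every finite subsystem has a compatible thread.
\end{itemize}

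\emph{The inclusion $\Omega(G)\to\Theta(G)$.} Send an end $\varepsilon$ to the thread $(u_F)_F$, where $u_F$ is the principal ultrafilter generated by $C(F,\varepsilon)$. Compatibility holds because $C(F',\varepsilon)\subset C(F,\varepsilon)$. Injectivity holds because distinct ends are separated by some finite $F$.

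For the embedding, compare the basic open sets. The preimage of $\pi_F^{-1}[\{C\}]$ is exactly $\Omega(F,\varepsilon)$ when $C = C(F,\varepsilon)$, which gives continuity. Conversely, $\Omega(F,\varepsilon)$ equals the trace of $\pi_F^{-1}[\{C(F,\varepsilon)\}]$ on the image, so the map is open onto its image.

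\emph{Surjectivity when $G$ is locally finite.} This is the main step. First, in a locally finite graph $G\setminus F$ has only finitely many components, because finitely many edges leave $F$. Hence every ultrafilter on $\mathrm{S}_{G,F}$ is principal.

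A thread is therefore a choice of infinite component $C_F$ for each $F$, with $C_{F'}\subset C_F$ whenever $F\subset F'$. This is exactly a direction in the sense of \cite{DIESTEL2003197}, and by the bijection recalled in the preliminaries it comes from an end. If we prefer not to cite that bijection, we build the end directly.
\begin{itemize}
\item Take finite sets $F_n$ exhausting the vertices, which is possible since $G$ is countable on each component (assume $G$ connected).
\item Choose vertices $v_n \in C_{F_n}$ and apply \cref{star-comb} to $U=\{v_n\}$. Local finiteness rules out an infinite star, so there is a comb, whose spine $r$ has tails in every $C_{F_n}$.
\item Since the $F_n$ exhaust the vertices, $r$ has a tail in $C_F$ for every finite $F$.
\end{itemize}

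Finally, a continuous bijection from the compact space $\Theta(G)$ onto the Hausdorff space $\Omega(G)$ is a homeomorphism. Alternatively, combine surjectivity with the embedding shown above.
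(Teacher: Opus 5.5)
Your proof is correct. The Stone-space part is the same as the paper's own justification in the paragraph before the proposition: $\varprojlim\Phi_G$ is a closed subset of the Tychonoff product of the Stone spaces $\mathrm{Ult}(2^{\mathrm{S}_{G,F}})$. Your description of $f_{F,F'}$ as $\mathrm{Ult}$ of the preimage map of $g\colon\mathrm{S}_{G,F'}\to\mathrm{S}_{G,F}$ makes the paper's formula precise and supplies the continuity it takes for granted.

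For the embedding $\Omega(G)\to\Theta(G)$ and the locally finite case, the paper gives no argument and defers to \cite{tangle}. Your route is a valid self-contained replacement for that citation. An end $\varepsilon$ becomes the thread of principal ultrafilters at the $C(F,\varepsilon)$. For connected locally finite $G$ every $\mathrm{S}_{G,F}$ is finite, so every thread is principal, i.e.\ a direction, which you realise by the spine of a comb. This also shows exactly where connectedness and local finiteness enter.

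A few slips to fix:
\begin{itemize}
\item Your non-emptiness aside is false in general. The finite-subsystem argument needs every $\Phi(F)\neq\emptyset$. This fails for finite graphs, and, with the paper's convention that $\mathrm{S}_{G,F}$ contains only infinite components, already for the infinite star with centre $c$ and $F=\{c\}$. Since non-emptiness is not claimed, just drop it.
\item For continuity you must pull back every subbasic set $\pi_F^{-1}([\mathcal{A}])$ with $\mathcal{A}\subseteq\mathrm{S}_{G,F}$, not only those with $\mathcal{A}=\{C\}$. The preimage is $\bigcup_{C\in\mathcal{A}}\{\varepsilon\mid C(F,\varepsilon)=C\}$, so this is immediate.
\item The compact-to-Hausdorff closing argument points the wrong way. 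The map you proved continuous starts at $\Omega(G)$, not at the compact $\Theta(G)$, and continuity of its inverse is exactly the openness you already proved. Use your second option instead: a surjective embedding is a homeomorphism.
\item Connectedness is not a harmless simplification; it is necessary. For a disjoint union of infinitely many rays, $\Omega(G)$ is countable discrete while $\varprojlim\Phi_G\cong\beta\mathbb{N}$.
\end{itemize}
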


    \subsection{The separation algebra} As we just argued, $\Theta(G)$ is an inverse limit of Stone spaces, a Stone space itself. In the following, we find this Stone space's associated Boolean algebra in terms of the original graph and its connectivity properties.

\begin{definition}[Separation algebra]
    The set $\presep{G}  \defeq \{ E \subset \mathrm{E}(G) \mid \mathrm{V}(E) \cap \mathrm{V}(E(G) \setminus E) \text{ is finite} \}$ regarded as a Boolean algebra via the usual operations on $2^{E(G)}$ will be called $G$'s \textit{finite pre-separation algebra}, or simply \textit{pre-separation algebra}. Let $F_G \subset \separationalgebra{G}$ be the ideal of finite sets of edges, and define $\separationalgebra{G}\doteq \presep{G}/F_G$ as the \textit{separation algebra} of $G$. 
\end{definition}

\begin{rmk*}
    We will not use different notations for $A \in \presep{G}$ and $[A] \in \separationalgebra{G}$ for simplicity.
\end{rmk*}

Of course, one must verify that $\presep{G}$ is indeed closed under union, intersection and complement, which is a routine set theoretic calculation.

An element $E \in \separationalgebra{G}$ defines a finite order separation $\{\mathrm{V}(E), \mathrm{V}(G) \backslash \mathrm{V}(E)\}$. Reciprocally, any finite order separation $\{A,B\}$ gives rise to the \textit{$A$- and $B$-flaps} $E_A \defeq \mathrm{E}(A) \cup \mathrm{E}(A,A\cap B)$ and $E_B \defeq \mathrm{E}(B) \cup \mathrm{E}(B,A\cap B) $, where $\mathrm{E}(X)$ are edges with both tips in the vertex-set $X$ and $\mathrm{E}(X,Y)$ is the set of edges with one tip in $X$ and the other in $Y$. Both $E_A$ and $E_B$ will be in $\presep{G}$. They are not exactly complements, but the symmetric difference between $E_B$ and $E(G) \setminus E_A$ is finite - so when we consider the quotient $\separationalgebra{G}$, they will be complement to each other, as in, $E_A+E_B = 1_{\mathfrak{T}(G)}\in \mathfrak{T}(G)$ and $E_A \cdot E_B = 0_\mathfrak{\mathfrak{T}(G)}$. 

This describes the elements of $\separationalgebra{G}$ as `oriented' separations, with the operation of complement resulting in the opposite orientation of the separation. It also shows the operations happen `modulo' finite sets.

\begin{example}[Algebra of separations of a ray and of an infinite star]
\begin{itemize}
\item[(i)] Take a graph $R$ composed of a single ray. If a set $E \subset E(R)$ contains infinitely many edges of the ray, for it to define a finite separation it must contain all but a finite number of edges of the ray. With that we can conclude $\presep{R}$ is isomorphic to the Boolean algebra $\{A \subset \mathbb{N} \st A \text{ is finite or } \mathbb{N} \setminus A \text{ is finite }\}$, and therefore the quotient $\separationalgebra{R}$ must be isomorphic to the trivial Boolean algebra $\{0,1\}$.
\item[(ii)] Take a graph $S$ which is an infinite star with center $c$ and tips $n \in \mathbb{N}$. Any set of edges $E \subset E(S)$ defines a finite separation, so $\presep{S} \cong 2^{\mathbb{N}}$. So the quotient $\separationalgebra{S}$ is isomorphic to the Boolean algebra of subsets of $\mathbb{N}$ under `almost equality' - where two subsets are considered equal if their symmetric difference is finite.
\end{itemize}
\end{example}


\paragraph{The ultrafilter space of the separation algebra}
Following the usual construction of an ultrafilter space of a Boolean algebra, we define

\begin{definition}
    Denote as $\widetilde{\mathcal{T}}(G)\doteq \mathrm{Ult}(\presep{G})$ and $\mathcal{T}(G) \doteq \mathrm{Ult}(\mathfrak{T}(G))$. This is naturally given a topology by the $\mathrm{Ult}$ functor. 
\end{definition}

Every finite set of edges lies in $\separationalgebra{G}$, so in particular singletons $\{e\} \in \separationalgebra{G}$. Therefore, we have a principal ultrafilter $u_e \in \widetilde{\mathcal{T}}(G)$ associated with each atom $\{e\}$. As was shown in \myref{quotient}, we can consider $\mathcal{T}(G) \subset \widetilde{\mathcal{T}}(G) $ - and like we exemplified in \myref{quotientisremoval}, the principal ultrafilters $u_e$ are exactly the ones removed in this inclusion.

Now, the tangles of a graph are given a topology with their inverse limit characterization. We shall make use of the explicit association \[\tau \in \Theta(G) \leftrightarrow x_\tau \in \varprojlim \Phi_G\] given in \myref{inverselimitcharact}. Given a tangle $\tau$, and a finite set of vertices $F$ and component subset $A \subset \mathrm{S}_{G,F}$, notice $\{(\bigcup A) \cup F, (\bigcup (\mathrm{S}_{G,F}\backslash A)\cup F)\}$ is a finite separation, with separator $F$, so \[x_\tau(F) \doteq \{A \subset \mathrm{S}_{G,F} \st ( (\bigcup (\mathrm{S}_{G,F}\backslash A)\cup F),(\bigcup A) \cup F) \in \tau\}\]

\begin{figure}[ht]
\centering
\begin{tikzpicture}[scale = 0.5]
    \begin{scope}{scale = 1.5}
    \def\a{2}
    
    \draw (0,0) circle ({1.3*\a});
    \node[shift = {({-1.1*\a},0)}] at (-\a,0) {$\bigcup(\mathrm{S}_{G,F}\backslash A)\cup F)$};
    \node[above,magenta] at (0,{1.2*\a}) {$\mathrm{S}_{G,F} \backslash A$};
    
    \draw (2*\a,0) circle ({1.3*\a});
    \node[shift = {({3.1*\a},0)}] at (-\a,0) {$(\bigcup A) \cup F)$};
    \node[above,teal] at (2*\a,{1.2*\a}) {$A$};

    \draw[->] (0,\a) -- ({2*\a},\a) node[midway,above] {$\tau$};

    \draw[magenta] (-0.2*\a,0.7*\a) circle ({0.2*\a});
    \draw[magenta] (-0.2*\a,0.1*\a) circle ({0.2*\a});
    \draw[magenta] (-0.2*\a,-0.5*\a) circle ({0.2*\a});
    \node at (-0.2*\a,-0.9*\a) {$\vdots$};

    \draw[teal,fill] (2.2*\a,0.7*\a) circle ({0.2*\a});
    \draw[teal,fill] (2.2*\a,0.1*\a) circle ({0.2*\a});
    \draw[teal,fill] (2.2*\a,-0.5*\a) circle ({0.2*\a});
    \node at (2.2*\a,-0.9*\a) {$\vdots$};

    \node at (\a,0) {$F$};
    \end{scope}
\end{tikzpicture}
\caption{Given a finite set $F$, a tangle $\tau$ give rise to a \textcolor{teal}{filter} in $\mathrm{S}_{G,F}$ by choosing the set of the filled connected components}
\end{figure}

Furthermore, a sub-basis of the topology of $\varprojlim \Phi_G$ around this point is the collection given by two parameters $F \in [\mathrm{V}(G)]^{<\aleph_0}$ and $A \in \mathrm{S}_{G,F}$ \[ x_\tau \in \pi_F^{-1}[A] = \left\{x \in \varprojlim \Phi_G \st x(F) \in [A] \right\} \]

With that in mind, let's show that the ultrafilter space of the separation algebra is exactly the tangle space.

\begin{theorem}[Tangle algebra]
\label{tanglealgebra}
   Let $G$ be a connected graph. There is a homeomorphism $\tau_\bullet: \mathcal{T}(G) = \mathrm{Ult}(\mathfrak{T}(G))\to \Theta(G)$.
\end{theorem}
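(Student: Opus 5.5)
We define $\tau_\bullet$ directly: to an ultrafilter $u \in \mathcal{T}(G)$ we associate
\[
\tau_u \doteq \{(A,B) \st \{A,B\} \text{ finite order separation},\ E_B \in u\}.
\]
The plan is to check that $\tau_u$ is a tangle, that $u \mapsto \tau_u$ is a bijection, and that it is continuous. Compactness of $\mathcal{T}(G)$ and Hausdorffness of $\Theta(G)$ then finish the proof.

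\emph{Step 1: $\tau_u$ is an $\aleph_0$-tangle.} Since $E_A + E_B = 1$ and $E_A \cdot E_B = 0$ in $\separationalgebra{G}$, exactly one of $E_A, E_B$ lies in $u$. So $\tau_u$ is an $\aleph_0$-orientation.

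Now suppose $(A_1,B_1),\dots,(A_n,B_n)\in\tau_u$ form a contradiction, with $n \le 3$. Every edge then lies in some $G[A_i]$. We claim $E_{B_1}\cap\dots\cap E_{B_n}$ is finite. An edge in every $E_{B_i}$ that lies in $G[A_j]$ must have both ends in $A_j\cap B_j$ or inside the finite separator, so it lies among the finitely many edges spanned by $\bigcup_i (A_i\cap B_i)$. Hence this intersection is $0$ in $\separationalgebra{G}$ and cannot belong to $u$, a contradiction.

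\emph{Step 2: $\tau_\bullet$ is a bijection.} For the inverse, take a tangle $\tau$ and set
\[
u_\tau \doteq \{E \in \separationalgebra{G} \st (V(E(G)\setminus E), V(E)) \in \tau\}.
\]
This is well defined on classes modulo finite sets by \myref{almostequalseparations}: changing $E$ by finitely many edges changes $V(E)$ and its complement side only finitely, after the finite separator is absorbed. We then verify that $u_\tau$ is an ultrafilter:
\begin{itemize}
\item $0\notin u_\tau$, since for finite $E$ the pair $(\cdot, V(E))$ would give a contradiction of order $1$ after adding the finite separator.
\item $u_\tau$ is upward closed and closed under meets, by \myref{intersectionclosure}; the order stays finite because all separators involved are finite.
\item Exactly one of $E$ and its complement lies in $u_\tau$, since $\tau$ orients every finite separation.
\end{itemize}
Checking that the two maps are mutually inverse uses the fact that, for a separation $\{A,B\}$, we have $V(E_B)\sim B$ and $V(E(G)\setminus E_B)\sim A$ up to finite sets, together with \myref{almostequalseparations} once more.

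The main obstacle is the degenerate situations: isolated or low-degree vertices in a separator, and separations whose flap has finitely many edges but possibly infinitely many vertices. Connectedness of $G$ is needed here. In a connected graph, a side $B$ with $B\setminus A$ nonempty contributes edges, so $V(E_B)$ recovers $B$ up to the separator. I would treat these cases carefully first.

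\emph{Step 3: continuity.} Use the sub-basis $\pi_F^{-1}[\mathcal{A}]$ of $\varprojlim\Phi_G$ from \myref{inverselimitcharact}, where $F$ is a finite vertex set and $\mathcal{A}\subset \mathrm{S}_{G,F}$. Let $E_{\mathcal{A}}$ be the set of edges of $G$ meeting $\bigcup\mathcal{A}$. Then $E_{\mathcal{A}}$ is the flap of the separation $\{(\bigcup(\mathrm{S}_{G,F}\setminus\mathcal{A}))\cup F,\ (\bigcup\mathcal{A})\cup F\}$, up to finitely many edges. Unwinding the definition of $x_\tau(F)$ gives
\[
\tau_\bullet^{-1}\bigl[\pi_F^{-1}[\mathcal{A}]\bigr]=[E_{\mathcal{A}}],
\]
which is open, so $\tau_\bullet$ is continuous. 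A continuous bijection from the compact space $\mathcal{T}(G)$ onto the Hausdorff space $\Theta(G)$ is a homeomorphism.
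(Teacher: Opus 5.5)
Your proposal is correct and follows essentially the same route as the paper. You use the same assignments $u \mapsto \tau_u$ (via the flaps $E_B$) and $\tau \mapsto u_\tau$, justified by Lemmas \ref{intersectionclosure} and \ref{almostequalseparations}, and you fill in details the paper leaves as routine: finiteness of $E_{B_1}\cap\dots\cap E_{B_n}$, and the fact that $V(E_B)$ agrees with $B$ up to the separator, which is where connectedness is used. The only divergence is the topological step: you prove continuity on the sub-basis $\pi_F^{-1}[\mathcal{A}]$ and then apply the compact-to-Hausdorff argument, whereas the paper matches basic open sets in both directions; both work.
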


\begin{proof}
    We construct a tangle from a non-principal ultrafilter of finite separations $u$. Note that no finite set of edges can be an element of this filter, as was argued in the paragraph preceding this proof. Given $\{A,B\}$ a finite separation, we know its flaps $E_A,E_B \in \mathfrak{T}(G)$ sum to $1_{\separationalgebra{G}}$ - so $u$ picks one and only one between these two. This permits us to define an orientation \[\tau_u \defeq \{(A,B) \mid E_B \in u\}.\] It cannot have contradictions: if the orientations $(A_1,B_1), \dots, (A_n,B_n) \in \tau_u$ were to form a contradiction, then $E_{B_1} \cdots E_{B_n} = 0_{\mathfrak{T(G)}}$ which would contradict $u$ being a filter.\\
    
    Given a $\aleph_0$-tangle $\tau$, we define $u_\tau \defeq \{A \in \mathfrak{T}(G) \st (V(E(G) \setminus A),V(A)) \in \tau \}$ which we claim is an ultrafilter in $\mathcal{T}(G)$. Notice that this is well defined by \ref{almostequalseparations}.  Since $(\mathrm{V}(G),\emptyset) \notin \tau$, we have $0_{\mathfrak{T}(G)} \notin u_\tau$. Take two $A_1,A_2 \in u_\tau$, then $(V(E(G) \setminus A_i),V(A_i)) \in \tau$ and by \myref{intersectionclosure} we have \[(V(E(G) \setminus A_1) \cup V(E(G) \setminus A_2) ,V(A_1) \cap V(A_2)) \in \tau\] 
    So, since $V(A_1) \cap V(A_2)\sim V(A_1 \cap A_2)$, we can see that $A_1 \cap A_2 \in u_\tau$. The fact that $\tau$ is an orientation guarantees that for any $A \in \separationalgebra{G}$ we have that either $A$ or $E(G) \setminus A$ is an element of $u_\tau$, which finalizes the proof that $u_\tau$ is an ultrafilter. It is routine to show that these constructions are inverse to each other: $u_{\tau_{u}} = u$ and $\tau_{u_{\tau}} = \tau$.\\

    The basic open sets of the inverse limit $\Theta(G)$ are $O(F,C) = \{\tau \st C \in \tau_F$, where $F$ is a finite set of vertices, $C$ is a subset of connected components of $G \setminus F$, $\tau_F$ is the ultrafilter corresponding to $F$ of the tangle $\tau$. The basic open sets of $\mathcal{T}(G)$ are $O(E) = \{u \st E \in u\}$ where $E \in \separationalgebra{G}$. It is routine to see that the bijection we constructed takes the basic open set $O(F,C)$ to the basic open set $O(E_C)$, where $E_C$ is the set of edges of the connected components of $C$. Similarly, the bijection we constructed takes the basic open set $O(E)$ to the basic open set $O(F_E, C_E)$ where $F_E = V(E)\cap V(E(G) \setminus E)$ and $C_E = \{C \in S_{G,F} \st C \subset V(E)\}$. This means that the bijection is indeed a homeomorphism.
    


\end{proof}

\begin{example}[Tangle space of a ray and of an infinite star]
\begin{itemize}
\item[(i)] Take a graph $R$ composed of a single ray. As we already argued $\presep{R}$ is isomorphic to the Boolean algebra $\{A \subset \mathbb{N} \st A \text{ is finite or } \mathbb{N} \setminus A \text{ is finite }\}$, and therefore the quotient $\separationalgebra{R}$ is isomorphic to the trivial Boolean algebra $\{0,1\}$. So $\mathrm{Ult}(\presep{R})$ is the 1-point compactification of $\mathbb{N}$, and $\mathrm{Ult}(\separationalgebra{R}) \cong \Theta(R)$ is a space of a single point.
\item[(ii)] Take a graph $S$ which is an infinite star with center $c$ and tips $n \in \mathbb{N}$. As we already argued, $\presep{S}$ is isomorphic to $2^{\mathbb{N}}$. So $\mathrm{Ult}(\presep{S})$ is the space of ultrafilters of natural numbers, that is, $\beta \mathbb{N}$. When we take the quotient $\separationalgebra{S}$, its ultrafilter space will contain all the ultrafilters except the principal ones, so  $\mathrm{Ult}(\separationalgebra{S}) \cong \Theta(S) $ is homeomorphic to $\beta \mathbb{N} \setminus \mathbb{N}$.
\end{itemize}
\end{example}

\section{Edge-tangles, edge-directions and the cut algebra}

\subsection{Edge-tangles and edge-directions}
Consider the tangle characterization given in  \myref{inverselimitcharact}, which is a way to define a topological structure on the set of tangles. If we change \emph{finite vertex-sets} to \emph{finite edge-sets}, we can define, for a connected graphs $G$:

\begin{definition}[Edge-tangles]
    Define the inverse system $\Phi_G^E$ in the same way as $\Phi_G$, but using $[\mathrm{E}(G)]^{< \aleph_0}$ instead of $[\mathrm{V}(G)]^{< \aleph_0}$. Call the inverse limit of the system the \textbf{edge-tangle space} $\Theta_E(G)$ and its elements edge-tangles.
\end{definition}

Now, for any finite edge-set $F \subset \mathrm{E}(G)$ and $G$ connected, we have that $S_{G,F}$ is always finite, so all the filters over $2^{\mathrm{\mathrm{S}_{G,F}}}$ are principal. This means that an edge-tangle is the same thing as an edge-direction, as defined in \cite{meninos2025}:

\begin{definition}[Edge-directions]
    Let $G$ be a graph. Define $\mathrm{S}^E_{G,F}$ to be the discrete space of the connected components of $G\backslash F$ for $F \in [\mathrm{E}(G)]^{<\aleph_0}$; take as transition maps $\varepsilon^G_{F_1,F_2}: \mathrm{S}^E_{G,F_2} \rightarrow \mathrm{S}^E_{G,F_1} $, for $F_1 \subset F_2$, the map that takes a connected component from $\mathrm{S}^E_{G,F_2}$ to the connected component from $\mathrm{S}^E_{G,F_1}$ in which it is contained in. The inverse limit of the system $(\mathrm{S}^E_{G,F}, \varepsilon^G_{F_1,F_2})$ will be the topological space of \emph{edge-directions} of $G$, denoted by $\mathcal{D}_E(G)$. We call it the \emph{edge-direction space}. It is routine to see that $\set{\mathcal{D}_E(F,\rho):F\in [\mathrm{E}(G)]^{\aleph_0}, \, \rho\in \mathcal{D}_E(G)}$ is a basis for $\mathcal{D}_E(G)$, with
\[\mathcal{D}_E(F,\rho) \doteq \set{\rho'\in \mathcal{D}_E(G)\st \rho'(F) = \rho(F)}.\]
\end{definition}

\begin{proposition}[Edge-tangles are edge-directions]
    There is a homeomorphism $\Theta_E(G) \cong \mathcal{D}_E(G)$.
\end{proposition}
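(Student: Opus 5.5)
The plan is to show that the two inverse systems $\Phi^E_G$ and $(\mathrm{S}^E_{G,F},\varepsilon^G_{F_1,F_2})$ are isomorphic, level by level and compatibly with the bonding maps. An isomorphism of inverse systems over the same directed set $[\mathrm{E}(G)]^{<\aleph_0}$ induces a homeomorphism of the limits, because the limit is the subspace of the product cut out by the compatibility equations. Throughout, $G$ is connected, as in the definition of edge-tangles.

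\emph{Step 1: each component set is finite.} For $F\in[\mathrm{E}(G)]^{<\aleph_0}$, the graph $G\setminus F$ has at most $|F|+1$ components. This follows by induction on $|F|$: deleting a single edge from a graph raises the number of components by at most one. Hence $\mathrm{S}_{G,F}$ is finite, the power-set algebra $2^{\mathrm{S}_{G,F}}$ is finite, and every ultrafilter on it is principal. The map $C\mapsto u_C\doteq\{A\subset \mathrm{S}_{G,F}\st C\in A\}$ is therefore a bijection $\mathrm{S}_{G,F}\to\mathrm{Ult}(2^{\mathrm{S}_{G,F}})$. Each $[\{C\}]=\{u_C\}$ is open, so both sides are finite discrete spaces and this bijection is a homeomorphism $h_F$.

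\emph{Step 2: the $h_F$ commute with the bonding maps.} For $F\subset F'$ and $C'\in \mathrm{S}_{G,F'}$, unwinding the definition gives $f_{F,F'}(u_{C'})=\{C\in\mathrm{S}_{G,F}\st C'\subset C\}$. Since $G\setminus F'\subset G\setminus F$ and $C'$ is connected, $C'$ lies in exactly one component $C$ of $G\setminus F$, namely $C=\varepsilon^G_{F,F'}(C')$. This $f_{F,F'}(u_{C'})$ is a family of components, hence of elements of $\mathrm{S}_{G,F}$, and it should be read as the ultrafilter of families of components containing one from this set. With that reading it is exactly $u_{\varepsilon^G_{F,F'}(C')}$. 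So $f_{F,F'}\circ h_{F'}=h_F\circ\varepsilon^G_{F,F'}$, and $(h_F)_F$ is an isomorphism of inverse systems. The induced map
\[(\rho(F))_F\mapsto (u_{\rho(F)})_F\]
is then a bijection $\mathcal{D}_E(G)\to\Theta_E(G)$, being the restriction of the product homeomorphism $\prod_F h_F$. It is a homeomorphism onto the limit, and it sends the basic set $\mathcal{D}_E(F,\rho)$ to $\pi_F^{-1}[\{\rho(F)\}]$.

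\emph{Expected obstacle: matching the index sets $\mathrm{S}_{G,F}$ and $\mathrm{S}^E_{G,F}$.} In the vertex setting, $\mathrm{S}_{G,F}$ consists only of the \emph{infinite} components, while $\mathrm{S}^E_{G,F}$ as defined contains all components. A compatible choice $\rho$ could consistently pick finite components, for instance the singleton $\{v\}$ for all large $F$, and such a choice has no counterpart among infinite components. I would first check which convention makes the statement true. The natural fix is to read $\Phi^E_G$ with all components of $G\setminus F$, mirroring the definition of $\mathcal{D}_E$. Alternatively, one restricts both systems to infinite components, analogously to the remark that $\tilde{\mathrm{S}}_{G,F}$ gives the same direction space. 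In either version the two systems use the same component sets, and Steps 1 and 2 go through verbatim. I would state explicitly which convention is adopted.
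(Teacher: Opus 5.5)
Your proposal is correct and is the paper's own argument made explicit. The paper justifies the statement only by noting that $\mathrm{S}_{G,F}$ is finite for connected $G$ and finite $F \subset \mathrm{E}(G)$, so every ultrafilter on $2^{\mathrm{S}_{G,F}}$ is principal and the two inverse systems coincide; your Steps 1--2 are exactly this, with the level-wise homeomorphisms and their compatibility with the bonding maps written out.

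Your convention remark is also well taken, since the paper silently identifies the two component sets. The statement holds whenever both systems use the same convention, but the infinite-components convention is the one the rest of the paper relies on (cf.\ Corollary~\ref{directionrepresentation} and Theorem~\ref{directionhomeo}): with all components, every finite-degree vertex would contribute an extra isolated direction.
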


Similarly to the vertex case, the edge-tangle/edge-direction space provides a compact space in which the edge-end space lies in.

\begin{proposition}[\cite{meninos2025}]
    $\mathcal{D}_E(G)$ is a compact, zero dimensional Hausdorff space, i.e., a Stone space. There is a topological inclusion $\Omega_E(G) \rightarrow \mathcal{D}_E(G)$. This is a homeomorphism when $G$ is locally finite.
\end{proposition}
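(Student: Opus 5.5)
The proposition has three parts: that $\mathcal{D}_E(G)$ is a Stone space, that there is an embedding $\Omega_E(G)\to\mathcal{D}_E(G)$, and that this embedding is onto when $G$ is locally finite. I take $G$ connected, as in the definition of edge-tangles. For the third part I read $\mathrm{S}^E_{G,F}$ as the set of \emph{infinite} components of $G\setminus F$, matching $\mathrm{S}_{G,F}$ and the edge-tangle system $\Phi^E_G$. If all components were allowed, then in a locally finite graph each vertex $v$ would give an extra direction "$F\mapsto$ the component containing $v$". Indeed, once all edges at $v$ are removed, $\{v\}$ is a component from then on, and such a direction comes from no edge-end.

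\emph{Stone space.} For finite $F\subset \mathrm{E}(G)$, removing $F$ from a connected graph leaves at most $|F|+1$ components, so each $\mathrm{S}^E_{G,F}$ is a finite discrete space. The product $\prod_F \mathrm{S}^E_{G,F}$ is compact by Tychonoff, and it is Hausdorff and zero-dimensional because each factor is. The inverse limit is the intersection of the sets $\{\rho : \varepsilon^G_{F_1,F_2}(\rho(F_2))=\rho(F_1)\}$. Each of these is closed (indeed clopen), since it depends on only two coordinates with finite discrete values. A closed subspace inherits all three properties, so $\mathcal{D}_E(G)$ is a Stone space.

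\emph{Embedding.} Define $\iota(\varepsilon)\defeq \rho_\varepsilon$ with $\rho_\varepsilon(F)\defeq C(F,\varepsilon)$.
\begin{itemize}
\item \emph{Well defined.} $C(F,\varepsilon)$ contains a ray, so it is infinite. For $F_1\subset F_2$, the component $C(F_2,\varepsilon)$ lies inside $C(F_1,\varepsilon)$, since both contain tails of the same rays. So $\rho_\varepsilon$ is compatible with the transition maps.
\item \emph{Injective.} If $\varepsilon\neq\eta$, the definition of edge-equivalence gives a finite $F$ with $C(F,\varepsilon)\neq C(F,\eta)$.
\item \emph{Embedding.} Directly from the definitions, $\iota^{-1}[\mathcal{D}_E(F,\rho_\varepsilon)]=\Omega_E(F,\varepsilon)$ and $\iota[\Omega_E(F,\varepsilon)]=\mathcal{D}_E(F,\rho_\varepsilon)\cap \iota[\Omega_E(G)]$. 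Hence basic open sets correspond and $\iota$ is a topological embedding.
\end{itemize}

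\emph{Surjectivity for locally finite $G$.} This is the main step. Since $G$ is locally finite and connected, it is countable.
\begin{enumerate}
\item Enumerate $\mathrm{E}(G)=\{e_1,e_2,\dots\}$ and put $F_n=\{e_1,\dots,e_n\}$. Given $\rho\in\mathcal{D}_E(G)$, the sets $\rho(F_n)$ form a decreasing sequence of infinite components. Choose distinct vertices $v_n\in\rho(F_n)$ and set $U=\{v_n\}$.
\item Apply the star-comb lemma (\myref{star-comb}) to $U$. An infinite star needs a centre of infinite degree, which is impossible here, so there is a comb with spine $r$ and infinitely many teeth in $U$.
\item Fix $n$. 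All but finitely many teeth lie in $\rho(F_n)$. The comb's paths are disjoint, so only finitely many of them use edges of $F_n$, and only finitely many edges of $r$ lie in $F_n$. Hence some tail of $r$ is joined by an $F_n$-avoiding path to a tooth in $\rho(F_n)$. This puts that tail in $\rho(F_n)$, i.e. $C(F_n,[r]_E)=\rho(F_n)$.
\item Every finite $F$ lies in some $F_n$, and $\rho(F)$ is then determined by $\rho(F_n)$ via the transition maps. Therefore $\rho_{[r]_E}=\rho$.
\end{enumerate}
A surjective embedding is a homeomorphism. I expect step 3, keeping control of where the spine's tail lies relative to the edges of $F_n$, to be the delicate point of the whole argument.
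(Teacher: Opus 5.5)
Your proof is correct. Your reading of $\mathrm{S}^E_{G,F}$ as the set of \emph{infinite} components is the right one, and it is the reading the paper implicitly uses:
\begin{itemize}
\item when it identifies edge-directions with edge-tangles, which are built from $\mathrm{S}_{G,F}$;
\item in Corollary~\ref{directionrepresentation};
\item in the proof of Theorem~\ref{directionhomeo} (``$\rho_u$ only picks infinite components'').
\end{itemize}
Under the literal definition, $\mathcal{D}_E$ of a single ray would be $\omega+1$ rather than a point, so the proposition would fail.

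The paper does not prove this proposition; it cites \cite{meninos2025}. The only internal analogue is the Tychonoff/closed-subspace remark made for $\Theta(G)$, and your Stone-space paragraph reproduces it in the edge setting.

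For surjectivity in the locally finite case, the paper's own toolkit gives a one-line argument via Corollary~\ref{directionrepresentation}. Alternative (ii) there needs a vertex of infinite degree, so in a locally finite graph every direction is induced by a ray and lies in the image of $\Omega_E(G)$. That route, however, rests on the line-graph theorem $\mathcal{D}_E(G)\approx\Omega(L(G))$ (Theorem~\ref{linetheorem}). Your star-comb argument along a cofinal sequence $F_n$ is instead elementary and self-contained; in effect it reproves the locally finite case of that corollary directly. The corollary route has one advantage: for arbitrary $G$ it also identifies the complement of $\Omega_E(G)$ in $\mathcal{D}_E(G)$, namely the directions given by vertices of infinite degree that dominate no ray.

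A cosmetic point on continuity of $\iota$: you should also note that a basic set $\mathcal{D}_E(F,\rho)$ whose $\rho(F)$ is not of the form $C(F,\eta)$ has empty preimage.
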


\paragraph{Line graphs and edge-directions} Given any graph $G$, we can construct the \emph{line graph of $G$}, denoted by $L(G)$, to be the graph whose vertices are in correspondence with $G$-edges (that is, $\mathrm{V}(L(G))\doteq \{v_e\}_{e \in \mathrm{E}(G)}$) such that $\{v_{e_1},v_{e_2}\}$ is an edge of $G'$ exactly when $e_1$ shares a vertex with $e_2$. Using the notation $X \approx Y$ for the homeomorphism relation between spaces, we prove in \cite{meninos2025} the following.

\begin{theorem}[\cite{meninos2025}]
\label{linetheorem}
    For any graph $G$, $\mathcal{D}_E(G) \approx \Omega(L(G))$. 
\end{theorem}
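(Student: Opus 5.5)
The plan is to identify the two inverse systems directly. Finite edge sets of $G$ and finite vertex sets of $L(G)$ are literally the same objects: $F \in [\mathrm{E}(G)]^{<\aleph_0}$ corresponds to $F_L \doteq \{v_e \st e \in F\} \in [\mathrm{V}(L(G))]^{<\aleph_0}$. This correspondence is an order isomorphism with respect to inclusion. So it suffices to produce, for each $F$, a bijection $\mathrm{S}^E_{G,F} \leftrightarrow \mathrm{S}_{L(G),F_L}$ that commutes with the transition maps. Once this is done, the limit $\mathcal{D}_E(G)$ becomes the set of directions of $L(G)$, and the direction characterization of \cite{DIESTEL2003197} recalled in the preliminaries identifies these with $\Omega(L(G))$.

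Throughout I take $G$ connected and read $\mathrm{S}^E_{G,F}$ as the set of \emph{infinite} components of $G\setminus F$, in analogy with $\mathrm{S}_{G,F}$. With all components instead, isolated vertices and finite graphs would give spurious points; for instance, a finite $G$ has empty $\Omega(L(G))$ but a nonempty edge-direction space.

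\textbf{Component correspondence.} Take an infinite component $C$ of $G\setminus F$. Being connected and infinite, it has infinitely many edges. Any two edges of $C$ are joined by a walk in $C$, and consecutive edges of that walk share a vertex, so $\{v_e \st e \in \mathrm{E}(C)\}$ is connected in $L(G)\setminus F_L$. It is also a full component: an edge of $G\setminus F$ sharing a vertex with an edge of $C$ lies in $C$. Conversely, a component $D$ of $L(G)\setminus F_L$ is a set of edges of $G\setminus F$ that is closed under sharing endpoints. Its edges therefore span a single component of $G\setminus F$, and $D$ is infinite exactly when that component is. This gives the bijection $C \mapsto \mathrm{E}(C)$. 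Compatibility with the transition maps is immediate: if $C' \subset C$ then $\mathrm{E}(C') \subset \mathrm{E}(C)$. Hence we obtain a bijection of inverse limits $\mathcal{D}_E(G) \to \varprojlim \mathrm{S}_{L(G),F_L}$. Moreover, it sends the basic set $\mathcal{D}_E(F,\rho)$ to the basic set of the direction space given by $F_L$ and the corresponding component, so it is a homeomorphism of the limits.

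\textbf{Passing to ends.} It remains to invoke the direction theorem, in its version with $\tilde{\mathrm{S}}$. This requires that every infinite component of $L(G)\setminus F_L$ contains a ray, and this is the step I expect to need care. I would apply the star-comb lemma (\cref{star-comb}) to an infinite connected $H = L(G)\setminus F_L$. A comb already yields a ray. If instead there is an infinite star, its center $v_e$ has infinite degree, so one endpoint of $e$ meets infinitely many edges of $G\setminus F$. Those edges form an infinite clique in $L(G)$, which contains a ray.

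Finally, I must check that the bijection of \cite{DIESTEL2003197} is a homeomorphism onto the direction topology. This holds because the sets $\Omega(F_L,\varepsilon)$ are exactly the preimages of single components under the projections. Composing the two homeomorphisms gives $\mathcal{D}_E(G) \approx \Omega(L(G))$.
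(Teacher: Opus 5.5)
Your proof is correct. The paper does not prove this statement: it imports it from \cite{meninos2025} and only uses it, in \myref{directionrepresentation} and \myref{object-commute}, so there is no in-paper argument to compare against. Your route is the natural one. You build an isomorphism of inverse systems indexed by $[\mathrm{E}(G)]^{<\aleph_0} = [\mathrm{V}(L(G))]^{<\aleph_0}$, given levelwise by $C \mapsto \mathrm{E}(C)$, and then apply the direction theorem of \cite{DIESTEL2003197} to $L(G)$.

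Two small comments.

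First, the ray-existence step can be dropped. The direction theorem, as recalled in the preliminaries, is already stated with $\mathrm{S}_{L(G),F_L}$, the infinite components. That is exactly the system you obtained, so neither the $\tilde{\mathrm{S}}$ version nor the star-comb argument is needed. The argument is nevertheless correct as written: $L(G)\setminus F_L = L(G-F)$, and a vertex of infinite degree in a line graph lies in an infinite clique. Likewise, connectedness of $G$ is never used.

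Second, reading $\mathrm{S}^E_{G,F}$ as the set of infinite components is not merely convenient; it is necessary. It also matches how the paper actually uses this space. The edge-tangle system $\Phi^E_G$ is built from $\mathrm{S}_{G,F}$, and the proof of \myref{directionhomeo} notes that $\rho_u$ only picks infinite components. If all components were allowed, the statement would fail even for a single ray $R$. Each vertex would give an isolated thread, so $\mathcal{D}_E(R) \cong \omega+1$, while $\Omega(L(R))$ is a single point.
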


Although useful in the topological context, edge-directions are difficult to manipulate in a `combinatorial way' because they lack the `ray representative' that is frequently used in the study of the usual vertex version of directions. The line graph end characterization should be useful in these situations. We present one way in which it is useful, where we characterize edge-directions as either arising from rays or from non-dominating vertices of infinite degree - a sort of edge analogue of \cite{DIESTEL2003197}'s bijection result.


\begin{corollary}[{of \myref{linetheorem}}, \cite{meninos2025}]
\label{directionrepresentation}
    Let $\rho \in \mathcal{D}_E(G)$ be any direction, then either one of the following is true, but not both:
    \begin{itemize}
        \item[(i)] There is a ray $r$ such that, for any finite edge-set $F$, $\rho(F)$ is the component of $G \backslash F$ that has an $r$-tail.
        \item[(ii)] There is a vertex $v$ of infinite degree that does not dominate any ray and such that, for any finite edge-set $F$, $\rho(F)$ is the component $G \backslash F$ containing $v$.
    \end{itemize}
\end{corollary}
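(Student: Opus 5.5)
The plan is to transport $\rho$ through the homeomorphism of \myref{linetheorem} to an end of the line graph, and then read off a representative in $G$. Let $\Psi:\mathcal{D}_E(G)\to\Omega(L(G))$ be that homeomorphism and $\varepsilon=\Psi(\rho)$. I will use the correspondence implicit in \myref{linetheorem}: for a finite edge-set $F$, the components of $G\setminus F$ correspond to the components of $L(G)\setminus\{v_e\}_{e\in F}$ (up to isolated vertices of $G$ without edges, harmless since $G$ is connected). Under this correspondence, $\rho(F)$ is the component whose edge set contains the vertices of $C(\{v_e\}_{e\in F},\varepsilon)$. Pick a ray $R=\langle v_{e_1},v_{e_2},\dots\rangle$ of $L(G)$ representing $\varepsilon$. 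Consecutive edges $e_i,e_{i+1}$ share a vertex $w_i$ of $G$. This gives a sequence $(w_i)$ such that $w_{i-1}$ and $w_i$ are the two ends of $e_i$ (or coincide if $e_{i-1},e_i,e_{i+1}$ share a vertex), so $(w_i)$ traces a walk in $G$ using distinct edges.

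Case split. First, if some vertex $v$ equals $w_i$ for infinitely many $i$, then $v$ is incident to infinitely many $e_i$ and so has infinite degree. For every finite $F$, a tail of $R$ avoids $\{v_e\}_{e\in F}$, so the corresponding tail edges lie in one component of $G\setminus F$, and infinitely many of them touch $v$. Hence $\rho(F)$ is the component containing $v$. If $v$ dominated some ray $r$, then no finite edge-set separates $v$ from $r$, since infinitely many edge-disjoint paths exist, so the ray-direction of $r$ would coincide with $\rho$ as well. In that situation we fall into (i), and it is absorbed there. So (ii) applies precisely when such a $v$ exists and dominates no ray.

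Second, if every vertex appears only finitely often among the $w_i$, then the walk $(w_i)$ visits infinitely many vertices, each finitely often. Deleting loops yields a ray $r$ in $G$ whose vertex set is cofinal in the walk. For finite $F$, a tail of the walk avoids the edges of $F$, so a tail of $r$ lies in the component of $G\setminus F$ containing the tail edges of $R$, namely $\rho(F)$. This gives (i). For the subcase of a dominating vertex $v$ from the first case, I will take any ray $r$ dominated by $v$. Edge-disjoint paths from $v$ to $r$ give the same component for every finite $F$, which yields (i) again.

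Exclusivity. Suppose both (i) and (ii) held, with ray $r$ and vertex $v$. Then $v$ and a tail of $r$ lie in the same component of $G\setminus F$ for all finite $F$. In the line graph, this says that the edges at $v$ cannot be separated from $r$ by finitely many $L(G)$-vertices. I would then apply \myref{star-comb} in $L(G)$ to the infinite set of edges at $v$, or argue directly via Menger in $G$, to produce infinitely many edge-disjoint $v$–$r$ paths. Since these paths can be chosen to be vertex-disjoint apart from $v$, this would mean $v$ dominates $r$, a contradiction. The main obstacle is exactly this step, passing from ``not finitely edge-separable'' to genuine vertex domination. I would use that $v$ has infinite degree and that separating the infinitely many edges at $v$ from $r$ by finitely many vertices distinct from $v$ corresponds to a finite edge cut. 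Equivalently, a finite vertex separator $S\not\ni v$ between $v$ and $r$ can be converted into a finite edge separator by taking all edges from $S$ toward the $r$-side. This needs care when vertices of $S$ have infinite degree, and I would handle that by an inductive choice of disjoint paths avoiding previously used vertices.
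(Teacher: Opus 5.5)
Your existence argument is sound: pulling a ray $R$ of $L(G)$ back to a walk $(w_i)$ in $G$ and splitting on whether some vertex recurs infinitely often yields one of two outcomes. Either you get an infinite-degree vertex $v$ with $v\in\rho(F)$ for every finite $F$, or, after loop-erasure, you get a ray $r$ with a tail in every $\rho(F)$. This is the natural way to read the statement as a corollary of Theorem~\ref{linetheorem}. The paper itself gives no proof here and only cites \cite{meninos2025}.

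The gap is in the exclusivity part, at the step ``these paths can be chosen to be vertex-disjoint apart from $v$''. Infinitely many edge-disjoint $v$--$r$ paths do not yield infinitely many paths that are disjoint except at $v$. Your proposed conversion of a finite vertex separator $S\not\ni v$ into a finite edge separator fails exactly when $S$ contains a vertex of infinite degree towards the $r$-side. No inductive choice of paths repairs this.

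In fact the step is false, and so is the ``not both'' clause as worded, with the paper's vertex notion of domination. Let $G$ have vertices $v,w,x_i,u_i$ ($i\in\mathbb{N}$) and edges $vx_i$, $x_iw$, $wu_i$, $u_iu_{i+1}$, and let $r=u_1u_2u_3\dots$.
\begin{itemize}
\item Every ray of $G$ has a tail in $r$: it meets $v$ and $w$ at most once, and each $x_i$ has only these two neighbours.
\item Every path from $v$ to $r$ passes through $w$. So $v$ has infinite degree but dominates no ray, since all but finitely many paths in a dominating family would have to pass through $w$.
\item For any finite $F\subset E(G)$, pick $i$ with $vx_i,x_iw\notin F$. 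Pick $j$ large, with $wu_j\notin F$ and no edge of $F$ on $u_ju_{j+1}\dots$. Then $v$ is joined to a tail of $r$ in $G\setminus F$, so $v\in\rho_r(F)$.
\end{itemize}
Hence the direction $\rho_r$ satisfies both (i) with $r$ and (ii) with $v$.

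The dichotomy is exclusive if (ii) is read with edge-domination: $v$ can be separated from every ray by finitely many edges, equivalently $\rho$ is not the direction of any ray. Under that reading exclusivity is immediate from (i). Your existence proof also goes through, provided you split your first case on whether $v$ is finitely edge-separable from every ray, rather than on whether it dominates one.
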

    \subsection{Cut algebras}

In what follows, we give an edge analogue of \myref{tanglealgebra}. Letting $A \subset V(G)$ be a set of vertices, we denote \[E(A,G \backslash A) \doteq \{e \in \mathrm{E}(G) \st e \text{ has one vertex in } A \text{ and the other in } G \backslash A\}\] Note that this set of edges, when removed from the graph, disconnects it. This is usually called a cut.

For any (connected) graph $G$, let $I_G \subset 2^{\mathrm{V}(G)}$ be the ideal of all the finite sets of vertices of finite degree. 

\begin{definition}[Pre-cut algebra and cut algebra]
    For a (connected) graph $G$, the \textbf{finite pre-cut algebra of $G$} is \[\precut{G} \doteq \{A \subset \mathrm{V}(G) \st E(A,G \backslash A) \text{ is finite }\}\] considered with the usual operations of union, intersection and complement (in finite graph theory, this is known as \textbf{the bond space} of $G$, a vector space over $\mathbb{F}_2$). The finite cut algebra of $G$ is $\cutalgebra{G} \doteq \precut{G}/I_G$.
\end{definition}

\begin{rmk*} 
Although we took a quotient in the definition, we shall denote elements $A \in \cutalgebra{G}$ by any of its representatives. 
\end{rmk*}

Like in \myref{quotientisremoval}, we will regard  $\mathrm{Ult}(\cutalgebra{G})$ as the subset of $\mathrm{Ult}(\precut{G}) $ that contains the ultrafilters that are not principal ultrafilters of vertices of finite degree.

\begin{theorem}[Edge-tangle algebra]
\label{directionhomeo}
For a connected graph $G$, there is a homeomorphism $\mathrm{Ult}(\cutalgebra{G}) \approx \mathcal{D}_E(G)$.
\end{theorem}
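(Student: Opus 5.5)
We will build an explicit bijection $\rho \mapsto u_\rho$ from $\mathcal{D}_E(G)$ to $\mathrm{Ult}(\cutalgebra{G})$, check that it and its inverse are well defined, and then compare basic open sets, mirroring the proof of \myref{tanglealgebra}. The key observation is that for a finite edge-set $F$, $G\setminus F$ has only finitely many components (as $G$ is connected). Each union of such components $A$ lies in $\precut{G}$, since $E(A, G\setminus A)\subset F$. Conversely, every $A\in\precut{G}$ is a union of components of $G\setminus F_A$, where $F_A \doteq E(A,G\setminus A)$.

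Given an edge-direction $\rho$, we set
\[u_\rho \doteq \{A \in \precut{G} \st \rho(E(A,G\setminus A)) \subset A\}.\]
This is well defined because $\rho(F_A)$ is a component of $G\setminus F_A$, hence lies entirely in $A$ or entirely in $G\setminus A$. The two choices are exchanged under complement, so exactly one of $A$ and $G\setminus A$ lies in $u_\rho$. For $A,B\in u_\rho$, put $F=F_A\cup F_B$. Compatibility gives $\rho(F)\subset\rho(F_A)\cap\rho(F_B)\subset A\cap B$, and then $\rho(F_{A\cap B})\supset \rho(F)$ forces $\rho(F_{A\cap B})\subset A\cap B$. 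So $u_\rho$ is closed under meets and is an ultrafilter of $\precut{G}$.

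It remains to see that $u_\rho$ contains no finite set of finite-degree vertices, so that it lies in $\mathrm{Ult}(\cutalgebra{G})$. If $A$ were such a set in $u_\rho$, take $F$ to be all edges incident to $A$. Then every vertex of $A$ is an isolated component of $G\setminus F$, so $\rho(F)=\{v\}$ for some $v$ of finite degree. This contradicts the fact that $\rho$ is an edge-direction: enlarging $F$ never separates $v$ further, so $\rho$ would then be ``principal at $v$''. To rule this out we use \myref{directionrepresentation}: in case (i) $\rho(F)$ contains a ray tail, and in case (ii) it contains a vertex of infinite degree, so it is never a singleton of finite degree. Hence $u_\rho \in \mathrm{Ult}(\cutalgebra{G})$.

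Conversely, given $u\in\mathrm{Ult}(\cutalgebra{G})$ and a finite $F$, the components of $G\setminus F$ are finitely many and partition $\mathrm{V}(G)$. Since $u$ is an ultrafilter, exactly one component $C$ lies in $u$, and we define $\rho_u(F)\doteq C$. Compatibility for $F\subset F'$ follows from upward closure, because the chosen $F'$-component is contained in some $F$-component which must then be in $u$.

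We then check that the two maps are mutually inverse. One direction is immediate: $\rho_{u_\rho}(F)$ is the component in $u_\rho$, namely $\rho(F)$. For the other, $A\in u$ iff $\rho_u(F_A)\subset A$, since $A$ is a finite union of $F_A$-components. The main obstacle I expect is this bookkeeping modulo $I_G$, together with the subtlety above that an edge-direction never ``ends'' at a finite-degree vertex. If I want to avoid \myref{directionrepresentation}, I would instead argue directly: when $\rho(F)=\{v\}$ with $v$ of finite degree, the compatible choices force the principal ultrafilter at $v$, which is exactly the kind of ultrafilter excluded by the quotient.

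Finally, for the topology, the basic open set $\mathcal{D}_E(F,\rho)$ corresponds to $[\rho(F)]$. Conversely, $[A]$ is a finite union of sets of the form $\mathcal{D}_E(F_A,\cdot)$, one for each component of $G\setminus F_A$ contained in $A$. So the bijection maps a basis onto a basis and is therefore a homeomorphism. Alternatively, since both spaces are compact Hausdorff, continuity in one direction already suffices.
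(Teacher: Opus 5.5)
Your proof is correct, but it is organized differently from the paper's. The paper builds $u_\bullet$ through representatives. It first invokes \myref{directionrepresentation} to realize each edge-direction by a ray $r$ or by a non-dominating vertex $v$ of infinite degree. It then sets $u_r=\{B \st B \text{ contains a tail of } r\}$, or takes $u_v$ to be the principal ultrafilter at $v$, and must argue that equivalent representatives give the same ultrafilter. Its inverse $\rho_\bullet$ and its comparison of bases are essentially yours, and it leaves the mutual-inverse check as routine.

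You instead define $u_\rho$ intrinsically by $A\in u_\rho \iff \rho(E(A,G\setminus A))\subset A$. This turns the ultrafilter axioms and both identities $\rho_{u_\rho}=\rho$ and $u_{\rho_u}=u$ into short explicit computations. You use the representation corollary only to exclude $\rho(F)=\{v\}$ with $v$ of finite degree. The paper's route buys a concrete description of the ultrafilters as $u_r$ and $u_v$, which it reuses later when computing $\mathrm{Ult}\circ\cutalgebrasymb(\phi)$. Yours is self-contained and actually carries out what the paper calls routine.

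Two small repairs are needed.

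First, you never check that $\rho_u(F)$ is an infinite component. This is needed for $\rho_u$ to lie in $\mathcal{D}_E(G)$ under the convention the theorem requires, namely the one matching the edge-tangle description via $\mathrm{S}_{G,F}$ and \myref{directionrepresentation}. If all components were allowed, every vertex of a ray would give its own direction, while $\mathrm{Ult}(\cutalgebra{R})$ is a point. The check is immediate: a finite component of $G\setminus F$ consists only of vertices of finite degree. So it lies in $I_G$ and cannot belong to $u$.

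Second, your proposed way of avoiding the corollary is circular. Saying that the principal ultrafilter at a finite-degree vertex is excluded by the quotient is exactly what you are trying to prove about $u_\rho$. The non-circular direct reason is the same convention: $\rho(F)$ is an infinite component by definition, so $\rho(F)=\{v\}$ cannot occur at all.
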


\begin{proof}

Let us define a map $u_{\bullet}:\mathcal{D}_E(G) \to \mathrm{Ult}(\cutalgebra{G})$. Using \myref{directionrepresentation}, we know a $\rho \in \mathcal{D}_E(G)$ either comes from a ray $r$ or a non-dominating vertex $v$ of infinite degree. Given a ray $r$, define $u_r$ as $\{B \in \cutalgebra{G}  \st B \text{ contains a tail of } r $\}. This is well defined due to representatives of the same quotient class only differing by a finite number of elements - so one contains a tail if and only if the other one does. The empty set cannot be in $u_r$, an intersection of two sets in $u_r$ must contain the smallest between the two tails of each set, and between $B$ and $G \setminus B$, a ray must have a tail entirely in one and only one of them, otherwise $E(B, G\setminus B)$ would be infinite - so $u_r$ is an ultrafilter. If we had picked a different ray $r'$ that gives rise to the same direction $\rho$, then the rays would be infinitely edge-connected (as is argued in \cite{meninos2025}), so they give rise to the same ultrafilter. Now, given a non dominating vertex $v$ of infinite degree, we define $u_v$ as the principal ultrafilter associated with $v$. Similarly to the case of rays, any two vertices that define the same direction cannot be separated by finitely many edges, so the ultrafilters they define will have to be the same. Neither $u_r$ or $u_v$ can end up being a principal ultrafilter of a vertex of finite degree, since they only pick infinite sets. With this, we have constructed a well defined map $u_{\bullet}:\mathcal{D}_E(G) \to \mathrm{Ult}(\cutalgebra{G})$. \\

Let us define a map $\rho_{\bullet}:\mathrm{Ult}(\cutalgebra{G}) \to \mathcal{D}_E(G)$. Take $u \in \mathrm{Ult}(\cutalgebra{G})$ and $F$ a finite set of edges. The removal of $F$ separates $G$ into a finite set of components $\mathrm{S}^\mathrm{E}_{G,F}$. Each $C \in \mathrm{S}^\mathrm{E}_{G,F} $ is only finitely connected to its complement, so $C \in \cutalgebra{G}$. Since the union of all these $C$ result in the entirety of $G$, $u$ must pick at least one of them, and cannot pick more than one since they are all disjoint. If $C$ is that chosen component, define $\rho_u(F) = C$. This will be a direction: if we were to remove more edges $F \subset F'$, the new components will be either entirely contained in $C$ or disjoint from it. Since the ultrafilter cannot pick one disjoint from $C$, then $\rho_u(F') \subset \rho_u(F)$. Also, since $u$ only picks infinite sets, $\rho_u$ only picks infinite components.\\

It is a routine calculation to convince oneself that these constructions $u_\bullet$ and $\rho_\bullet$ are inverse to each other.\\

Take $u \in \mathrm{Ult}(\cutalgebra{G})$ and a basic neighborhood $[B]$ containing $u$. Letting $F = E(B, G \setminus B)$, we already argued that $u$ picks only one of the connected components of $G \setminus F$, let's call it $C$. Then $u \in [C] \subset [B]$. This shows that the open sets $[C]$ arising from connected components $C$ of $G \setminus F$ for all the finite sets of edges $F$ form a basis for the space $\mathrm{Ult}(\cutalgebra{G})$. Notice now that the map $\rho_\bullet$ will take $[C]$ to the basic open set $\mathcal{D}_E(\rho_u,F)$, and the map $u_\bullet$ takes $\mathcal{D}_E(\rho_u,F)$ back towards $[C]$.
\[
u \in [C] 
\xrightarrow[\rho_{\bullet}]{\xleftarrow{u_{\bullet}}}
\rho_\mathcal{U} \in \mathcal{D}_E(\rho_\mathcal{U}, F),
\]

This shows that these bijections take basis sets to basis sets, thus concluding the proof that they are homeomorphisms.
\end{proof}

\begin{corollary}
\label{object-commute}
    For a connected graph $G$, the spaces $\mathcal{D}_E(G) \approx \Omega(L(G)) \approx \mathrm{Ult}(\mathfrak{D}(G))$ are all homeomorphic. In a categorical phrasing, this means that the diagram
    \[
    \begin{CD}
        \catname{Graph}  		@>L>> 	\catname{Graph} 		\\
        @V \cutalgebrasymb VV 			@VV \Omega V  	\\
        \catname{Bool}			@>\mathrm{Ult} >> 	\catname{Top}
    \end{CD}
    \]
    commutes, considering these only as functions from the objects of each of these categories (connected graphs for $\catname{Graph}$, Boolean algebras for $\catname{Bool}$ and topological spaces for $\catname{Top}$).
\end{corollary}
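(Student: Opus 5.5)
The corollary follows by chaining two homeomorphisms already established, both for an arbitrary connected graph $G$. I read $\mathfrak{D}(G)$ in the statement as the cut algebra $\cutalgebra{G}$; the commutative square confirms this, since its left vertical arrow is $\cutalgebrasymb$.

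The first link is $\mathcal{D}_E(G) \approx \Omega(L(G))$, which is exactly \myref{linetheorem}. That result holds for any graph, so it applies in particular to connected ones.

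The second link is $\mathcal{D}_E(G) \approx \mathrm{Ult}(\cutalgebra{G})$, which is \myref{directionhomeo}, proved for connected $G$. Composing one homeomorphism with the inverse of the other gives $\Omega(L(G)) \approx \mathrm{Ult}(\cutalgebra{G})$. So all three spaces are pairwise homeomorphic, and this is the first assertion.

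For the categorical phrasing, take a connected graph $G$ and follow the square both ways. Along the top and right edges we get $\Omega(L(G))$. Along the left and bottom edges we get $\mathrm{Ult}(\cutalgebra{G})$. These two spaces are homeomorphic, so they are isomorphic in $\catname{Top}$. Hence the diagram commutes on objects, up to isomorphism. The statement explicitly restricts to objects, so nothing about morphisms needs checking.

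There is no real obstacle here. The points to be careful about are bookkeeping:
\begin{itemize}
\item Commutation means equality only up to isomorphism in $\catname{Top}$, not literal equality of objects.
\item \myref{directionhomeo} needs connectedness, which is why the statement restricts to connected graphs.
\item If one also wants $L(G)$ connected, so that $\Omega$ is applied under the same hypotheses, note that the line graph of a connected graph is connected. This holds provided $G$ has at least one edge, and the trivial one-vertex case can be handled separately.
\end{itemize}
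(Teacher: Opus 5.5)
Your proposal is correct and matches the paper, which gives no separate proof and obtains the corollary by combining Theorem~\ref{linetheorem} with Theorem~\ref{directionhomeo}; it also reads $\mathfrak{D}(G)$ as $\cutalgebra{G}$, as you do. Your remarks that the square commutes on objects only up to homeomorphism, and about connectedness and the trivial case, are appropriate bookkeeping that the paper leaves implicit.
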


    \section{The edge-direction functor }
\label{edgedirecfuncsection}

Functorial associations between objects of a combinatorial, algebraic or topological nature are ubiquitous in literature: the Stone space of a Boolean algebra, the fundamental group of a topological space, the geometric realization of an abstract simplicial complex... These associations offer tools and properties of these objects. From \myref{object-commute}, we have the commutativity of the diagram

    \[
    \begin{CD}
        \catname{Graph}  		@>L>> 	\catname{Graph} 		\\
        @V \mathbb{\cutalgebrasymb} VV 			@VV \Omega V  	\\
        \catname{Boole}			@>\mathrm{Ult} >> 	\catname{Top}
    \end{CD}
    \]
    
but only for the objects of these categories. In what follows, we ask if we can make the commutativity of the above diagram a functorial one. The functorial nature of $\mathrm{Ult}$ is well known - it is the one used in the categorical equivalence between Boolean algebras and Stone spaces. The other maps, though, are not widely known to be functorial. What we will do is choose a specific class of morphisms in $\catname{Graph}$ where $\mathbb{\cutalgebrasymb}, L$ and $\Omega$ will have naturally defined images for these morphisms. Thus, when restricted to this class, the above diagram will be a true categorical commuting diagram - and we will have built, through each of the two possible pathways, a functor $\mathcal{D}_E: \catname{Graph} \rightarrow \catname{Top}$ mapping a graph into its edge-direction space.

\begin{rmk*}
    Some of the results do not require the graph to be connected and some do - for simplicity, we will define our categories to always contain only connected graphs.
\end{rmk*}

\subsection{Making \texorpdfstring{$\Omega$}{Lg} into a functor}

In what follows, we show how to functorialize $\Omega$. This was explored by one of the authors and colleagues in \cite{funcPaper}. We will present here only the necessary for what we will use, and the interested reader can find a more in-depth discussion in \cite{funcPaper}. \\

$\catname{Graph}$ is usually regarded as a category with morphisms $\phi:G_1 \rightarrow G_2$ being given by graph homomorphisms or 1-complex morphisms. The first class is defined by demanding $\phi$ takes adjacent vertices (vertices joined together by an edge) to adjacent vertices, while the second one relaxes this demand by allowing adjacent vertices to be sent either to adjacent vertices or to a single vertex (one way to think of this is to extend the notion of `adjacency' by allowing a vertex to be called adjacent to itself). Not all of these morphisms are well-behaved enough to induce a well-defined continuous map between end spaces. In \cite{funcPaper}, it is described two classes of morphisms where $\Omega$ will be a functor. We will consider here only one of them: the strongly proper morphisms.

\begin{definition}[Dispersed sets and strongly proper morphisms, \cite{funcPaper}]
    A set of vertices $U \subset V(G)$ is called \textbf{dispersed} when, for any ray $r$ of $G$, there exists a finite set of vertices $F \subset V(G)$ such that the connected component $C$ of $G \setminus F$ which contains a tail of $r$ is disjoint from $U$. In other words, $U$ is dispersed when it can be separated from any ray using finitely many vertices.\\
    Let $\phi:G_1 \to G_2$ be a 1-complex morphism. We say $\phi$ is \textbf{strongly proper} when $\phi^{-1}(v)$ is dispersed for every vertex $v \in V(G_2)$.
\end{definition}

\begin{example}[Strongly proper morphisms, \cite{funcPaper}]
\label{strproperexamples}
    The following morphisms are all strongly proper.
    \begin{itemize}
        \item[(i)] The identity map of any graph.
        \item[(ii)] Injective graph homomorphisms (which can be seen as subgraph inclusions).
        \item[(iii)] An 1-complex morphism $\phi$ such that $\phi^{-1}(v)$ is always finite, for any $v \in V(G_2)$
        \item[(iv)] A graph homomorphism $\phi$ between pruned trees of height $\omega$ such that $\phi^{-1}(v)$ is rayless for any $v \in V(G_2)$.
    \end{itemize}
\end{example}  

Strongly proper morphisms have nice `end preserving properties' in such a way that it allows us to induce from them a continuous map between the ends. Here's one somewhat immediate such property. A graph morphism $\phi$ takes a ray $r \subset G_1$ to a path $\phi(r)$ that is not necessarily a ray, since $\phi$ can be non-injective. However, if $\phi$ is strongly proper, then there is a ray $r' \subset \phi(r)$. This ray $r'$ induces an unique end $\varepsilon$, as the following proposition describes.

\begin{proposition}[\cite{funcPaper}]
    Let $\phi:G_1 \to G_2$ be a strongly proper morphism. For every $G_1$-ray $r$, there is a ray $s_r \subset \phi(r)$ with the following properties.
    \begin{itemize}
        \item[(i)] \textbf{In $G_2$:} Any ray $s' \subset G_2$ that adheres to $\phi(r)$ is equivalent to $s_r$.
        \item[(ii)] \textbf{In $G_1$:} Any ray $r' \subset r$ such that $r' \sim r$ defines a ray $s'_{r'}\subset \phi(r')$ such that $s_r \sim s'_{r'}$.
    \end{itemize}
    Therefore, there is a well defined function from $\Omega(G_1)$ taking an end induced by a ray $r$ to the end in $\Omega(G_2)$ induced by the ray $s_r$, which is the only end that adheres to $\phi(r)$.
    \label{properraymap}
\end{proposition}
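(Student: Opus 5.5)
The plan is to show that the fibers of $\phi$ meet $r$ only finitely often. From this, $\phi(r)$ is an infinite, connected, locally finite subgraph of $G_2$ that ``has only one end''. All three claims then follow from the direction characterization of ends of \cite{DIESTEL2003197}: two rays are equivalent exactly when, for every finite $F\subset V(G_2)$, their tails lie in the same component of $G_2\setminus F$. Throughout I read ``$s'$ adheres to $\phi(r)$'' as: for every finite $F\subset V(G_2)$, the component $C(F,s')$ meets $\phi(r)$ in infinitely many vertices. If \cite{funcPaper} uses a slightly different notion, the argument below should adapt.

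\emph{Step 1 (finite fibers along $r$).} Fix $v\in V(G_2)$. Since $\phi^{-1}(v)$ is dispersed, there is a finite $F\subset V(G_1)$ such that the component of $G_1\setminus F$ containing a tail of $r$ is disjoint from $\phi^{-1}(v)$. So some tail of $r$ avoids $\phi^{-1}(v)$, and hence $r$ meets $\phi^{-1}(v)$ in only finitely many vertices. Two consequences follow.
\begin{itemize}
\item[(a)] $\phi(r)$ is infinite.
\item[(b)] $\phi(r)$ is locally finite. Indeed, $\phi(r)$ is the image of a walk whose consecutive vertices are adjacent or equal, so a neighbour of $w$ in $\phi(r)$ is the image of an $r$-neighbour of one of the finitely many vertices of $r\cap\phi^{-1}(w)$.
\end{itemize}
Being the image of a walk, $\phi(r)$ is connected. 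König's lemma then yields a ray $s_r\subset\phi(r)$.

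\emph{Step 2 (one end).} Let $F\subset V(G_2)$ be finite. By Step 1, $r\cap\phi^{-1}(F)$ is finite, so there is a tail $t$ of $r$ with $\phi(t)\cap F=\emptyset$. Then $\phi(t)$ is connected and lies in a single component $D_F$ of $G_2\setminus F$. The complement $\phi(r)\setminus\phi(t)$ is contained in the image of a finite initial segment of $r$, hence is finite. Therefore all but finitely many vertices of $\phi(r)$ lie in $D_F$.

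It follows that every ray contained in $\phi(r)$ has a tail in $D_F$. Likewise, every ray $s'$ adhering to $\phi(r)$ has $C(F,s')=D_F$: its component meets $\phi(r)$ infinitely often, but only finitely many vertices of $\phi(r)$ lie outside $D_F$. Since this holds for every finite $F$, such rays induce the same direction as $s_r$, hence are equivalent to it. This proves (i), and $s_r$ itself adheres to $\phi(r)$, so it is the unique end adhering to $\phi(r)$.

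\emph{Step 3 (independence of representative).} For (ii), let $r'\subset r$. The rays $r'$ and $r$ share a tail, so $\phi(r')$ and $\phi(r)$ share the image of a common tail. Applying Steps 1--2 to $r'$ gives $s'_{r'}$ whose tails lie in $D_F$ for every $F$, that is, $s'_{r'}\sim s_r$.

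For a general $r'\sim r$, I would need $\phi$ to respect ray equivalence. Take a ladder between $r$ and $r'$ (cf.\ Figure~\ref{ladder}) and apply $\phi$. Dispersedness of fibers, via Step 1 applied to the spines and to the rungs, shows that for a finite $F$ only finitely many rungs meet $\phi^{-1}(F)$. Hence $D_F(r)=D_F(r')$. This gives well-definedness of the induced map $\Omega(G_1)\to\Omega(G_2)$.

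\emph{Main obstacle.} I expect the main obstacle to be this last point: controlling the images of infinitely many rungs with only the fiberwise dispersedness hypothesis. My first attempt would be to argue that $\phi^{-1}(F)$ is a finite union of dispersed sets, hence dispersed. Then one finite $F'\subset V(G_1)$ separates the common tails of $r$ and $r'$ from $\phi^{-1}(F)$. All but finitely many rungs then lie in the component $C(F',r)$, and their images avoid $F$ and connect $\phi(r)$ to $\phi(r')$.
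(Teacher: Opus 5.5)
Your proof is correct. It reaches the same conclusions as the paper by a partly different route, mainly in how you produce $s_r$.

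\textbf{Parts (i) and (ii).} These rest on the same fact as the paper's proof: for finite $F\subset V(G_2)$, a tail of $r$ avoids $\phi^{-1}(F)$ and so maps into a single component of $G_2\setminus F$.

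\begin{itemize}
\item Your handling of a general $r'\sim r$ in the last paragraph is essentially the paper's argument. A finite union of dispersed sets is dispersed (take the union of the separators), so one finite $F'\subset V(G_1)$ has $C(F',r)=C(F',r')$ disjoint from $\phi^{-1}(F)$.
\item The earlier sentence about ``Step 1 applied to the rungs'' should be dropped, since Step 1 concerns rays, not finite paths.
\item No rungs are needed at all. $C(F',r)$ is itself a connected subgraph avoiding $\phi^{-1}(F)$ that contains tails of both rays. Its image is therefore connected in $G_2\setminus F$, which gives $D_F(r)=D_F(r')$ directly.
\end{itemize}

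\textbf{Existence of $s_r$.} This is the genuine difference.
\begin{itemize}
\item The paper applies the star-comb lemma to the induced subgraph $G_2[\phi(r)]$. It then has to exclude the star alternative, which it does rather informally.
\item You work with the image subgraph, keeping only images of edges of $r$. Finiteness of each $r\cap\phi^{-1}(w)$ makes it locally finite, and K\"onig's lemma gives the ray.
\end{itemize}
Your route is more elementary and removes the case analysis. It does rely on using the image subgraph, because $G_2[\phi(r)]$ need not be locally finite. For example, let $G_1=G_2$ be a ray plus a vertex $c$ adjacent to all its vertices, let $\phi$ be the identity, and let $r$ start at $c$.

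\textbf{Your Step 2.} The paper proves (i) by contradiction using a comb. You instead prove the positive statement that all but finitely many vertices of $\phi(r)$ lie in one component $D_F$. This yields (i) and the uniqueness of the adhering end at once. It needs only that $r$ meets each fiber finitely, whereas the paper invokes dispersedness of $\phi^{-1}(F)$.

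Your notion of adherence is implied by the paper's comb-based one. That is the direction needed for (i), and $s_r$ adheres trivially in either sense, so nothing is lost by using it.
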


\begin{proof}
    The image $\phi(r)$ can't be finite, otherwise $\phi$ would not be strongly proper. By the star-comb lemma (\myref{star-comb}) applied to the \emph{connected} graph $G_2[\phi(r)]$, there is either a comb or an infinite star subdivision in $G_2[\phi(r)]$. If there wasn't a comb then we would definitely have a star, let's say with center $c$. There can't be infinitely many vertices $v \in r$ with $\phi(v) = c$ because that would mean $\phi$ is not strongly proper. On the other hand, $\phi^{-1}(c)$ can't be finite either, otherwise a tail of $r$ would have to be mapped into a single leg of the star, which is a finite set, again contradicting $\phi$ being strongly proper. So we conclude there must be a comb inside $\phi(r)$, let's say with spine $s_r \subset \phi(r)$.\\

    $(i)$: Consider any other ray $s'$ that adheres to $\phi(r)$. This means there is a $G_2$-comb $\{s',\{p_n\}\}$ with $s'$ as a spine and infinitely many teeth $U \subset \phi(r)$. Suppose $s' \not\sim s_r$. In that case, there must be a finite vertex-set $F \subset V(G_2)$ that separates them. This means that in $G_2 \backslash F$, $s'$ has a tail in a connected component $C_1$ and $s_r$ in another $C_2$. Since $\phi^{-1}(F)$ is dispersed, then $r$ has a tail $r'\subset r$ such that $\phi(r')$ is entirely contained in a connected component of $G_2 \setminus F$, which must necessarily then be $C_2$. But this contradicts the existence of infinitely many pairwise disjoint paths $p_n$ from $s'$ to $\phi(r)$.\\

  $(ii)$: Let $r' \sim r$. We prove the ray $s_r$ adheres to $\phi(r')$, from which, by using $(i)$, we can conclude $s_{r} \sim s'_{r'}$. Suppose not, in other words, suppose there is a finite $F \subset V(G_2)$ that separates $\phi(r')$ from $s_r$. Since $r' \sim r$, there must be infinitely many pairwise disjoint paths $\{p_n\}$ where each $p_n$ starts at a vertex in $r$ and ends in a vertex in $r'$. This means $\phi(p_n)$ is a \emph{walk} (a path that can repeat vertices) from $\phi(r')$ to $\phi(r)$. Since $\phi^{-1}(F)$ must be dispersed, there is a tail of $r$ and a tail of $r'$ such that those tails are disjoint from $\phi^{-1}(F)$ and so are the paths $p_n$. But then this means that in $G_2 \setminus F$, a tail of the spine $s_r \subset \phi(r)$ is in the same connected component as $\phi(r')$, contradicting the choice of $F$ as the set of vertices that separates $\phi(r')$ from $s_r$.
\end{proof}

\begin{definition}[The end-map]
    Let $\phi:G_1 \to G_2$ be a strongly proper map. The previous proposition gives rise to a well defined function $\Omega \phi: \Omega(G_1) \to \Omega(G_2)$, where for each ray $r$ we define $\Omega \phi([r]) = [s]$ where $s$ is any ray $s \subset \phi(r)$.
\end{definition} 

We would like to form a category with strongly proper morphisms as the morphisms. So, we need to make sure that composition of strongly proper morphisms is still strongly proper. This follows from

\begin{proposition}[\cite{funcPaper}]
\label{thinfiber}
    Let $\phi:G_1 \to G_2$ be strongly proper and $D \subset \mathrm{V}(G_2)$ dispersed. Then $\phi^{-1}(D)$ is dispersed.
\end{proposition}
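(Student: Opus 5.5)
We argue by contradiction and reduce everything to the definition of dispersedness in $G_1$ and $G_2$. Fix a $G_1$-ray $r$. We want a finite $F_1 \subset V(G_1)$ such that the component of $G_1 \setminus F_1$ containing a tail of $r$ misses $\phi^{-1}(D)$. By \myref{properraymap}, $\phi(r)$ contains a ray $s_r$, and $\phi(r)$ adheres only to the end $[s_r]$. Since $D$ is dispersed, there is a finite $F_2 \subset V(G_2)$ such that the component $C_2$ of $G_2 \setminus F_2$ containing a tail of $s_r$ is disjoint from $D$.

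The first step is to show that a tail $r'$ of $r$ is mapped entirely into $C_2$. Since each fiber $\phi^{-1}(v)$, $v \in F_2$, is dispersed, and a finite union of dispersed sets is dispersed (take the union of the finitely many separating sets), $\phi^{-1}(F_2)$ is dispersed. So there is a finite $F' \subset V(G_1)$ whose removal leaves the tail of $r$ in a component $C'$ with $C' \cap \phi^{-1}(F_2) = \emptyset$. A 1-complex morphism maps connected sets to connected sets. Hence $\phi(C')$ is a connected subgraph of $G_2 \setminus F_2$. It contains a tail of $\phi(r)$, and therefore meets the tail of $s_r \subset \phi(r)$, so $\phi(C') \subset C_2$. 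In fact this gives more than a tail of $r$: the whole component $C'$ maps into $C_2$, which is disjoint from $D$. Thus $C' \cap \phi^{-1}(D) = \emptyset$, and $F_1 \doteq F'$ witnesses dispersedness for $r$.

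The point needing care is the claim that $\phi(C')$ meets the tail of $s_r$ in $C_2$, rather than some other component. Since $s_r \subset \phi(r)$ and $\phi(r')$ lies in one component of $G_2 \setminus F_2$, infinitely many vertices of $s_r$ lie in $\phi(r')$, because $\phi(r) \setminus \phi(r')$ is finite. Those vertices eventually avoid the finite set $F_2$, so they form a tail of $s_r$ up to finitely many vertices. The tail of $s_r$ lying in $C_2$ therefore shares vertices with $\phi(r')$, which forces $\phi(C') \subset C_2$.

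The main obstacle, then, is that finiteness bookkeeping: $\phi(r) \setminus \phi(r')$ must be finite. This uses that $r \setminus r'$ is finite, together with the observation that $s_r$, being a ray, cannot lie within a finite set. With it in place, the rest of the argument is routine.
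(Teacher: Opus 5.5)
Your proof is correct, but it takes a different route from the paper's: the paper argues by contradiction through disjoint paths, while you exhibit the separator directly.

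The paper works in the language of adherence. It assumes some ray $r$ has infinitely many disjoint paths $p_n$ to $\phi^{-1}(D)$ and pushes them forward to walks $q_n$ from $\phi(r)$ to $D$. It then separates the ray $r'\subset\phi(r)$ from $D$ by a finite $F$. Finally it concludes that infinitely many $p_n$ meet $\phi^{-1}(F)$, so $r$ adheres to $\phi^{-1}(F)$, contradicting strong properness.

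You instead show that the whole component $C'$ of $G_1\setminus F'$ containing a tail of $r$ is mapped into $C_2$. This gains you three things:
\begin{itemize}
\item You never need the Menger-type equivalence between ``cannot be separated by finitely many vertices'' and ``has infinitely many disjoint paths'', which the paper uses implicitly when it unpacks non-dispersedness.
\item You prove that $\phi^{-1}(F)$ is dispersed because it is a finite union of dispersed fibers. The paper only asserts a contradiction with strong properness at this point.
\item You show that a tail of $r$ lands in the component of $G_2\setminus F$ containing a tail of the ray inside $\phi(r)$. This is exactly what the paper's claim that ``the paths $q_n$ must go through $F$'' needs and does not spell out, since the walks $q_n$ start somewhere in $\phi(r)$, not necessarily on that component.
\end{itemize}
The paper's version does have the merit of matching the adherence language used in the surrounding propositions.

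Two cosmetic points. You announce a proof by contradiction but then give a direct argument. Also, the clause ``$\phi(r')$ lies in one component of $G_2\setminus F_2$'' plays no role in your count showing that infinitely many vertices of $s_r$ lie in $\phi(r')$; that count only needs $r\setminus r'$ to be finite.
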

\begin{proof}
    Suppose there was a ray $r \subset G_1$ that adheres to $\phi^{-1}(D)$. This means we have infinitely many pairwise disjoint paths $\{p_n\}$ starting from $r$ and ending at $\phi^{-1}(D)$. Applying $\phi$ to $r$, we get a connected subset $\phi(r)$ that is not necessarily a ray, but contains a ray $r' \subset \phi(r)$, as we argued in the last proposition. By applying $\phi$ to the paths $\{p_n\}$, we get connected subsets $\phi(p_n)$ which must contain a path $q_n$ from the start in $\phi(p_1)$ to the end in $\phi(p_n)$. Therefore $q_n$ connects $\phi(r)$ to $D$. Since $D$ is dispersed, $r'$ can't adhere to $D$, so there is a finite $F \subset V(G_2)$ that separates $r'$ from $D$. The paths $q_n$ must go through $F$ then, which implies $\phi^{-1}(F)$ contains infinitely many elements from the paths $p_n$. But this would imply $r$ adheres to $\phi^{-1}(F)$, contradicting $\phi$ being strongly proper.
\end{proof}

If $\phi:G_1 \to G_2$ and $\psi:G_2 \to G_3$ are strongly proper, then if we take a a finite $F \subset V(G_3)$, we will have $(\psi \circ \phi)^{-1}(F) = \phi^{-1}(\psi^{-1}(F))$ dispersed due to $\psi^{-1}(F)$ being dispersed - showing that $\psi \circ \phi$ is also strongly proper.

\begin{proposition}[\cite{funcPaper}]
    Let $\phi:G_1 \to G_2$ be a strongly proper morphism. The end-map $\Omega(\phi): \Omega(G_1) \rightarrow \Omega(G_2)$ is continuous.
\end{proposition}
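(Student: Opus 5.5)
We will prove continuity at each end by pulling back basic open sets. Fix $\varepsilon = [r] \in \Omega(G_1)$ and set $\eta \doteq \Omega\phi(\varepsilon) = [s_r]$, with $s_r \subset \phi(r)$ the ray given by \myref{properraymap}. A basic neighbourhood of $\eta$ has the form $\Omega(F,\eta)$ with $F \in [\mathrm{V}(G_2)]^{<\aleph_0}$, and we write $C \doteq C(F,\eta)$. We will find a finite $F_1 \subset \mathrm{V}(G_1)$ with
\[
\Omega\phi[\Omega(F_1,\varepsilon)] \subset \Omega(F,\eta).
\]

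The natural candidate is produced by dispersedness. Because $F$ is finite, $\phi^{-1}(F)$ is a finite union of fibers, each dispersed, so $\phi^{-1}(F)$ is itself dispersed. (Alternatively, note that $F$ is trivially dispersed and apply \myref{thinfiber}.) Applying this to the ray $r$, we obtain a finite $F_1 \subset \mathrm{V}(G_1)$ such that the component $D \doteq C(F_1,\varepsilon)$ of $G_1 \setminus F_1$ is disjoint from $\phi^{-1}(F)$.

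The key observation is then that $\phi(D)$ lies inside a single component of $G_2 \setminus F$. Indeed, $D$ is connected, and $\phi$ is a 1-complex morphism, so adjacent vertices of $D$ are sent to adjacent or equal vertices. Hence $\phi(D)$ induces a connected subgraph of $G_2$, and it avoids $F$ because $D \cap \phi^{-1}(F) = \emptyset$. To see that this component is $C$: a tail of $r$ lies in $D$, so a tail of $\phi(r)$ lies in $\phi(D)$. The ray $s_r \subset \phi(r)$ has a tail inside the image of that tail of $r$. This holds because only finitely many vertices of $r$ map into any given finite initial segment of $s_r$, since fibers are dispersed and so meet $r$ only finitely often. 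Hence $\phi(D) \subset C$.

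Finally, take any $\varepsilon' = [r'] \in \Omega(F_1,\varepsilon)$. Then $r'$ has a tail $t \subset D$, so $\phi(t) \subset \phi(D) \subset C$. By \myref{properraymap}(ii) applied to the tail $t$, $\Omega\phi(\varepsilon')$ is represented by a ray inside $\phi(t) \subset C$, and therefore $C(F,\Omega\phi(\varepsilon')) = C$, that is, $\Omega\phi(\varepsilon') \in \Omega(F,\eta)$. The main obstacle is the step that places a tail of $s_r$ inside $\phi(D)$, equivalently the claim that any ray contained in $\phi(t)$ has a tail in $C$. We handle it with the same fiber-finiteness argument: every ray contained in $\phi(t)$ lies in $\phi(t)$ entirely, and $\phi(t) \subset C$. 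So in fact it suffices to choose the representative ray inside the image of the tail, which \myref{properraymap}(ii) permits.
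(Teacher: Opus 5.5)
Your proof is correct, and it takes a more direct route than the paper's.

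\textbf{What the paper does.} It does not use the dispersedness of $\phi^{-1}(F)$ ray by ray. Instead it invokes the enveloping technique of \cite{duality} to enlarge $\phi^{-1}(F)$ to a single dispersed set $D \subset \mathrm{V}(G_1)$ of finite adherence. Then, for an end living in a component $C$ of $G_1 \setminus D$, it uses the finite neighbourhood $F_{D,C}$ as the separator, and maps paths inside $C$ forward to paths in $G_2 \setminus F$.

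\textbf{What you do differently.} You observe that this detour is unnecessary for continuity. The definition of dispersed, applied to $\phi^{-1}(F)$ and a representative of $\varepsilon$, already gives a finite $F_1$ with $C(F_1,\varepsilon) \cap \phi^{-1}(F) = \emptyset$. Connectedness of $\phi(C(F_1,\varepsilon))$ under a 1-complex morphism then does the rest.

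\textbf{What each route buys.} The envelope gives uniformity: one set $D$ serves every end of $\Omega(\phi)^{-1}(\Omega(F,\eta))$ simultaneously, and the separator depends only on the component of $G_1 \setminus D$, not on the individual end. The price is importing a nontrivial lemma. Your pointwise argument is self-contained and uses nothing beyond the definition of dispersed and \myref{properraymap}.

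\textbf{One small slip.} Your stated reason that $s_r$ has a tail inside $\phi(t)$ points the wrong way. You say only finitely many vertices of $r$ map into a finite initial segment of $s_r$, but that does not yield the claim. The correct one-line reason is as follows. The discarded initial segment $r \setminus t$ is finite, so its image is finite and $s_r$ meets it only finitely often. Since $s_r \subset \phi(t) \cup \phi(r \setminus t)$, a tail of $s_r$ lies in $\phi(t)$. Your fallback also settles the point: by \myref{properraymap}(ii), you may choose the representative of $\eta$ inside $\phi(t)$. So the argument goes through as written.
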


\begin{proof}
A basic neighborhood of $\Omega(G_2)$ has the form $\Omega(\epsilon',F) \subset \Omega(G_2)$ for a certain $\epsilon' \in \Omega(G_2)$ and a finite $F \subset V(G_2)$. Since $\phi^{-1}(F)$ is dispersed, we can use the technique of enveloping, which can be found in \cite{duality}, to find a set $D \subset V(G_1)$ such that\footnote{$D$ will in fact be a rayless normal sub-tree of $G$, but we will not need to use that.}
\begin{itemize}
    \item $\phi^{-1}(F) \subset D$
    \item $D$ has finite adherence, which means that for each $C$ connected component of $G_1 \setminus D$ there exists a finite set $F_{D,C}$ such that any path starting at $C$ and ending at $G_1 \setminus C$ must go through a vertex in $F_{D,C}$
    \item $D$ is dispersed
\end{itemize}

For a given $\epsilon \in \Omega(\phi)^{-1}(\Omega(\epsilon',F))$, say that $C$ is the connected component containing a tail of any representative of $\epsilon$ in $G_1 \setminus D$ (this must exist since $D$ is dispersed). We claim that $ \Omega(\epsilon,F_{D,C}) \subset \Omega(\phi)^{-1}(\Omega(\epsilon',F))$, which is enough to conclude $\Omega(\phi)$ is continuous. Take a $\eta \in \Omega(\epsilon,F_{D,C})$. Notice that $C$ is also a connected component of $G_1 \setminus F_{D,C}$ and it contains a tail of any representative of $\epsilon$ and therefore a tail of any representative of $\eta$. Also, $C \subset G_1 \setminus D \subset G_1 \setminus \phi^{-1}(F)$. Take rays $r_\epsilon$ and $r_\eta$ that induce $\epsilon$ and $\eta$ respectively. The images $\phi(r_\epsilon)$ and $\phi(r_\eta)$ are connected subsets of $G_2$ that contain rays $r_{\epsilon'}$ and $r_{\eta'}$  that induce, respectively, $\Omega(\phi)(\epsilon) = \epsilon'$ and $\Omega(\phi)(\eta) \defeq \eta'$. Since there is a path between $r_\epsilon$ and $r_\eta$ in $C$, this path is entirely outside of $\phi^{-1}(F)$, and by mapping that path with $\phi$ to $G_2$, we get a path between $r_{\epsilon'}$ and $r_{\eta'}$ that does not go through $F$. Therefore $\eta' \in \Omega(\epsilon',F)$, concluding our claim.

\end{proof}

It is quite clear that $\Omega \id_G = \id_{\Omega(G)}$, since $r \subset r = \id_{G}(r)$. The functoriality is also readily available for us now.

\begin{proposition}
    Let $\phi:G_1 \to G_2$ and $\psi:G_2 \to G_3$ be strongly proper maps, then $\Omega(\psi \circ \phi) = \Omega \psi \circ \Omega \psi$.
\end{proposition}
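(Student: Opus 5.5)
The statement as printed has a typo: it should read $\Omega(\psi \circ \phi) = \Omega \psi \circ \Omega \phi$, and this is what I will prove. The plan is to work entirely at the level of representative rays, using the characterization in \myref{properraymap}. For a strongly proper $\phi$ and a ray $r$, $\Omega\phi([r])$ is the unique end of $G_2$ that adheres to $\phi(r)$, and it is represented by any ray contained in $\phi(r)$. By the remark after \myref{thinfiber}, $\psi\circ\phi$ is strongly proper, so $\Omega(\psi\circ\phi)$ is defined. Fix a $G_1$-ray $r$; it suffices to show that $\Omega(\psi\circ\phi)([r])$ and $\Omega\psi(\Omega\phi([r]))$ coincide.

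First apply \myref{properraymap} to $\phi$ to get a ray $s \subset \phi(r)$ with $\Omega\phi([r]) = [s]$. Next apply it to $\psi$ and $s$ to get a ray $t \subset \psi(s)$ with $\Omega\psi([s]) = [t]$. Since $s \subset \phi(r)$, we have
\[
t \subset \psi(s) \subset \psi(\phi(r)) = (\psi\circ\phi)(r).
\]
So $t$ is a ray of $G_3$ contained in $(\psi\circ\phi)(r)$.

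Finally, apply \myref{properraymap} to the strongly proper map $\psi\circ\phi$ and the ray $r$. Any ray contained in $(\psi\circ\phi)(r)$ adheres to it, since a ray adheres to itself via infinitely many trivial one-vertex paths. By part (i), every such ray represents the same end $\Omega(\psi\circ\phi)([r])$. Hence $\Omega(\psi\circ\phi)([r]) = [t] = \Omega\psi(\Omega\phi([r]))$. The conclusion does not depend on the chosen representative $r$ or on the choices of $s$ and $t$, because each $\Omega$-map is well defined by \myref{properraymap}.

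The only point needing care is the claim that a ray contained in an image set adheres to it, which justifies using part (i) to identify ends. This is immediate: take the comb formed by the ray itself with trivial teeth at its own vertices. I therefore expect no real obstacle, and the argument is essentially bookkeeping of inclusions of images.
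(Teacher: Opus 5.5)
Your proof is correct and matches the paper's argument: choose $s \subset \phi(r)$, then $t \subset \psi(s) \subset (\psi\circ\phi)(r)$, and identify $[t]$ both as $\Omega\psi(\Omega\phi([r]))$ and as $\Omega(\psi\circ\phi)([r])$. You are right that the statement should read $\Omega\psi\circ\Omega\phi$, and your appeal to part (i) of \myref{properraymap}, using that $\psi\circ\phi$ is strongly proper, only makes explicit the last identification that the paper takes directly from the definition of the end-map.
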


\begin{proof}
    Let $r$ be any $G_1$-ray. There exists a $G_2$-ray $s \subset \phi(r)$, and there exists a $G_3$-ray $t$ such that \[t \subset \psi(s) \subset \psi(\phi(r)) = (\psi\circ \phi)(r).\] Notice $[s] = \Omega \phi ([r])$ and finally \[ [t] = \Omega \psi ([s]) = \Omega \psi (\Omega \phi ([r])) = (\Omega \psi \circ \Omega \phi) ([r]).\]
\end{proof}

With everything we showed, we have

\begin{theorem}[The strongly proper category, \cite{funcPaper}]
\label{end-spacefunc}
    There is a category $\catname{Graph}_{SP}$  whose objects are connected graphs and whose morphisms are the strongly proper morphisms. There is a functor $\Omega:\catname{Graph}_{SP} \to \catname{Top}$ that takes a graph $G$ to its end space $\Omega(G)$ and takes a strongly proper morphism $\phi:G_1 \rightarrow G_2$ to a continuous map $\Omega(\phi): \Omega(G_1) \rightarrow \Omega(G_2)$. We can describe the map $\Omega(\phi)$ explicitly: for any end $\varepsilon \in \Omega(G_1)$ and a ray $r$ that induces $\varepsilon$, the image $\Omega(\phi)(\varepsilon)$ is the only end that cannot be separated from $\phi(r)$ by removing a finite number of vertices; also, there exists a ray $r' \subset \phi(r)$ that induces $\Omega(\phi)(\varepsilon)$. 
\end{theorem}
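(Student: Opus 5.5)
The theorem is mostly bookkeeping: it collects into one statement the facts established just above. I will go through the category axioms, then the functor axioms, then the explicit description of $\Omega(\phi)$.

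\emph{Category.} Objects are connected graphs and morphisms are strongly proper 1-complex morphisms.
\begin{itemize}
\item Identities are strongly proper by \myref{strproperexamples}(i). Concretely, $\id^{-1}(v)=\{v\}$ is finite, and a single vertex is dispersed: remove it, or a finite neighbourhood of it.
\item Closure under composition follows from \myref{thinfiber}. For a vertex $v\in V(G_3)$, the set $\{v\}$ is finite, and finite sets are dispersed (remove the set itself). So $\psi^{-1}(v)$ is dispersed because $\psi$ is strongly proper. Applying \myref{thinfiber} to $\phi$ then shows $(\psi\circ\phi)^{-1}(v)=\phi^{-1}(\psi^{-1}(v))$ is dispersed.
\item Associativity and the unit laws hold because these are composition and identities of vertex maps. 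A composite of 1-complex morphisms is again a 1-complex morphism, since adjacent-or-equal vertices map to adjacent-or-equal vertices.
\end{itemize}

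\emph{Functor.} On objects, $\Omega(G)$ is the end space. On a morphism $\phi$, $\Omega(\phi)$ is the end-map, which \myref{properraymap} shows is well defined: it does not depend on the representative ray $r$ or on the chosen ray $s\subset\phi(r)$. The preceding proposition gives continuity. The identity law holds because $r\subset\id(r)$. The composition law is the preceding proposition: a ray $s\subset\phi(r)$ yields a ray $t\subset\psi(s)\subset(\psi\circ\phi)(r)$, with $[t]=\Omega\psi(\Omega\phi[r])$. Since $\psi\circ\phi$ is strongly proper, $[t]=\Omega(\psi\circ\phi)[r]$ by well-definedness. (The statement of that proposition contains a typo, $\Omega\psi\circ\Omega\psi$; it should read $\Omega\psi\circ\Omega\phi$.)

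\emph{Explicit description.} The proof of \myref{properraymap} shows, via the star-comb lemma and strong properness, that $\phi(r)$ contains a ray $s_r$. By definition $s_r$ represents $\Omega(\phi)(\varepsilon)$. We then need uniqueness: $\Omega(\phi)(\varepsilon)$ is the only end that cannot be finitely separated from $\phi(r)$.

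First, $[s_r]$ itself is not separable from $\phi(r)$, since $s_r\subset\phi(r)$.

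Conversely, let $\eta$ be an end with ray $s'$, and suppose no finite $F$ separates $s'$ from $\phi(r)$. We argue as in item (i) of \myref{properraymap}. Suppose $s'\not\sim s_r$, and take a finite $F$ separating them. Then $\phi^{-1}(F)$ is dispersed, because a finite union of fibres is dispersed (take the union of the finitely many separating sets). So some tail $r_0$ of $r$ avoids $\phi^{-1}(F)$, and $\phi(r_0)$ lies in the component of $G_2\setminus F$ containing the tail of $s_r$. The remaining part $\phi(r\setminus r_0)$ is finite. Enlarging $F$ by this finite set then separates $s'$ from $\phi(r)$, a contradiction.

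The only genuinely delicate point is this last one: matching ``cannot be separated'' with ``adheres'' in item (i). The enlargement of $F$ by $\phi(r\setminus r_0)$ handles it.
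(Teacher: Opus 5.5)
Your proposal is correct and follows the paper's route. The paper likewise obtains the theorem by assembling the preceding results: identities and closure under composition (via \myref{thinfiber}), well-definedness from \myref{properraymap}, the continuity proposition, and the composition law. Your uniqueness step for the explicit description is a direct finite-separator version of item (i) of \myref{properraymap}, which neatly avoids translating ``cannot be separated'' into the comb formulation of ``adheres''.
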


\subsection{Making \texorpdfstring{$L$}{Lg} into a functor }
Take a morphism $\phi:G_1 \to G_2$. For there to be a map $L(\phi):L(G_1) \to L(G_2)$ defined in the natural way $L(\phi)(\{u,v\}) \doteq \{\phi(u),\phi(v)\}$ we have to demand that $\phi$ maps vertices joined together by an edge into vertices joined together by an edge - as in, $\phi$ be a graph homomorphism. If two edges $a \doteq {\{x,y\}}$ and $b \doteq {\{y,z\}}$ are adjacent in $L(G_1)$, observe $L(\phi)(a)$ and $L(\phi)(b)$ will also be adjacent edges (they share the common vertex $\phi(y)$).

\begin{figure}[ht]
\centering
\scalebox{0.85}{
\tikzset{every picture/.style={line width=0.75pt}} 

\begin{tikzpicture}[x=0.75pt,y=0.75pt,yscale=-0.7,xscale=0.95]

\draw   (108,226.42) .. controls (107.99,222.65) and (110.9,219.57) .. (114.49,219.57) .. controls (118.09,219.56) and (121.01,222.61) .. (121.02,226.39) .. controls (121.03,230.17) and (118.12,233.24) .. (114.52,233.25) .. controls (110.93,233.26) and (108.01,230.2) .. (108,226.42) -- cycle ;
\draw   (48,156.42) .. controls (47.99,152.65) and (50.9,149.57) .. (54.49,149.57) .. controls (58.09,149.56) and (61.01,152.61) .. (61.02,156.39) .. controls (61.03,160.17) and (58.12,163.24) .. (54.52,163.25) .. controls (50.93,163.26) and (48.01,160.2) .. (48,156.42) -- cycle ;
\draw   (110,156.42) .. controls (109.99,152.65) and (112.9,149.57) .. (116.49,149.57) .. controls (120.09,149.56) and (123.01,152.61) .. (123.02,156.39) .. controls (123.03,160.17) and (120.12,163.24) .. (116.52,163.25) .. controls (112.93,163.26) and (110.01,160.2) .. (110,156.42) -- cycle ;
\draw   (165.47,154.78) .. controls (165.47,151) and (168.37,147.93) .. (171.97,147.92) .. controls (175.56,147.91) and (178.49,150.96) .. (178.49,154.74) .. controls (178.5,158.52) and (175.6,161.59) .. (172,161.6) .. controls (168.4,161.61) and (165.48,158.55) .. (165.47,154.78) -- cycle ;
\draw    (54.52,163.25) -- (114.49,219.57) ;
\draw    (116.52,163.25) -- (114.49,219.57) ;
\draw    (172,161.6) -- (114.49,219.57) ;
\draw   (474,226.42) .. controls (473.99,222.65) and (476.9,219.57) .. (480.49,219.57) .. controls (484.09,219.56) and (487.01,222.61) .. (487.02,226.39) .. controls (487.03,230.17) and (484.12,233.24) .. (480.52,233.25) .. controls (476.93,233.26) and (474.01,230.2) .. (474,226.42) -- cycle ;
\draw   (414,156.42) .. controls (413.99,152.65) and (416.9,149.57) .. (420.49,149.57) .. controls (424.09,149.56) and (427.01,152.61) .. (427.02,156.39) .. controls (427.03,160.17) and (424.12,163.24) .. (420.52,163.25) .. controls (416.93,163.26) and (414.01,160.2) .. (414,156.42) -- cycle ;
\draw   (532,155.42) .. controls (531.99,151.65) and (534.9,148.57) .. (538.49,148.57) .. controls (542.09,148.56) and (545.01,151.61) .. (545.02,155.39) .. controls (545.03,159.17) and (542.12,162.24) .. (538.52,162.25) .. controls (534.93,162.26) and (532.01,159.2) .. (532,155.42) -- cycle ;
\draw    (420.52,163.25) -- (480.49,219.57) ;
\draw    (538.52,162.25) -- (480.49,219.57) ;
\draw    (247,189.6) -- (301,189.6) -- (324,189.6) ;
\draw [shift={(326,189.6)}, rotate = 180] [color={rgb, 255:red, 0; green, 0; blue, 0 }  ][line width=0.75]    (10.93,-3.29) .. controls (6.95,-1.4) and (3.31,-0.3) .. (0,0) .. controls (3.31,0.3) and (6.95,1.4) .. (10.93,3.29)   ;
\draw   (38,111.6) -- (220,111.6) -- (220,275.6) -- (38,275.6) -- cycle ;
\draw   (365,112.6) -- (653,112.6) -- (653,276.6) -- (365,276.6) -- cycle ;
\draw   (114.23,446.48) .. controls (114.22,442.7) and (117.13,439.63) .. (120.72,439.62) .. controls (124.32,439.61) and (127.24,442.67) .. (127.25,446.44) .. controls (127.26,450.22) and (124.35,453.29) .. (120.76,453.3) .. controls (117.16,453.31) and (114.24,450.26) .. (114.23,446.48) -- cycle ;
\draw   (40.78,339.64) .. controls (40.77,335.86) and (43.68,332.79) .. (47.28,332.78) .. controls (50.87,332.77) and (53.79,335.82) .. (53.8,339.6) .. controls (53.81,343.38) and (50.9,346.45) .. (47.31,346.46) .. controls (43.71,346.47) and (40.79,343.41) .. (40.78,339.64) -- cycle ;
\draw   (187.68,339.63) .. controls (187.67,335.85) and (190.58,332.78) .. (194.18,332.77) .. controls (197.77,332.76) and (200.69,335.81) .. (200.7,339.59) .. controls (200.71,343.37) and (197.8,346.44) .. (194.21,346.45) .. controls (190.61,346.46) and (187.69,343.4) .. (187.68,339.63) -- cycle ;
\draw   (472.6,417.42) .. controls (472.59,413.65) and (475.5,410.57) .. (479.09,410.57) .. controls (482.69,410.56) and (485.61,413.61) .. (485.62,417.39) .. controls (485.63,421.17) and (482.72,424.24) .. (479.12,424.25) .. controls (475.53,424.26) and (472.61,421.2) .. (472.6,417.42) -- cycle ;
\draw   (472.47,347.78) .. controls (472.47,344) and (475.37,340.93) .. (478.97,340.92) .. controls (482.56,340.91) and (485.49,343.96) .. (485.49,347.74) .. controls (485.5,351.52) and (482.6,354.59) .. (479,354.6) .. controls (475.4,354.61) and (472.48,351.55) .. (472.47,347.78) -- cycle ;
\draw    (479,354.6) -- (479.09,410.57) ;
\draw    (245.6,380.6) -- (299.6,380.6) -- (322.6,380.6) ;
\draw [shift={(324.6,380.6)}, rotate = 180] [color={rgb, 255:red, 0; green, 0; blue, 0 }  ][line width=0.75]    (10.93,-3.29) .. controls (6.95,-1.4) and (3.31,-0.3) .. (0,0) .. controls (3.31,0.3) and (6.95,1.4) .. (10.93,3.29)   ;
\draw   (36.6,302.6) -- (218.6,302.6) -- (218.6,466.6) -- (36.6,466.6) -- cycle ;
\draw   (363.6,303.6) -- (651.6,303.6) -- (651.6,467.6) -- (363.6,467.6) -- cycle ;
\draw   (120.72,439.62) -- (53.8,339.6) -- (187.68,339.63) -- cycle ;

\draw (107,235) node [anchor=north west][inner sep=0.75pt]   [align=left] {$\displaystyle c$};
\draw (48,123) node [anchor=north west][inner sep=0.75pt]   [align=left] {$\displaystyle x_{1}$};
\draw (108,124) node [anchor=north west][inner sep=0.75pt]   [align=left] {$\displaystyle x_{2}$};
\draw (161,123) node [anchor=north west][inner sep=0.75pt]   [align=left] {$\displaystyle x_{3}$};
\draw (57,175) node [anchor=north west][inner sep=0.75pt]   [align=left] {$\displaystyle e_{1}$};
\draw (473,235) node [anchor=north west][inner sep=0.75pt]   [align=left] {$\displaystyle \phi ($$\displaystyle c)$};
\draw (387,124) node [anchor=north west][inner sep=0.75pt]   [align=left] {$\displaystyle \phi ( x_{1})$};
\draw (397,191) node [anchor=north west][inner sep=0.75pt]   [align=left] {$\displaystyle L\phi ( e_{1})$};
\draw (277,160) node [anchor=north west][inner sep=0.75pt]   [align=left] {$\displaystyle \phi $};
\draw (482,124) node [anchor=north west][inner sep=0.75pt]   [align=left] {$\displaystyle \phi ( x_{2}) \ =\ \phi ( x_{3})$};
\draw (97.52,165.25) node [anchor=north west][inner sep=0.75pt]   [align=left] {$\displaystyle e_{2}$};
\draw (521,189) node [anchor=north west][inner sep=0.75pt]   [align=left] {$\displaystyle L\phi ( e_{2}) \ =\ L\phi ( e_{3})$};
\draw (145.52,183.25) node [anchor=north west][inner sep=0.75pt]   [align=left] {$\displaystyle e_{3}$};
\draw (40.6,307) node [anchor=north west][inner sep=0.75pt]   [align=left] {$\displaystyle e_{1}$};
\draw (183.6,308) node [anchor=north west][inner sep=0.75pt]   [align=left] {$\displaystyle e_{2}$};
\draw (453.6,428) node [anchor=north west][inner sep=0.75pt]   [align=left] {$\displaystyle L\phi ( e_{1})$};
\draw (275.6,351) node [anchor=north west][inner sep=0.75pt]   [align=left] {$\displaystyle L\phi $};
\draw (416.6,316) node [anchor=north west][inner sep=0.75pt]   [align=left] {$\displaystyle L\phi ( e_{2}) \ =\ L\phi ( e_{3})$};
\draw (111.6,408) node [anchor=north west][inner sep=0.75pt]   [align=left] {$\displaystyle e_{3}$};
\draw (196,245) node [anchor=north west][inner sep=0.75pt]   [align=left] {$\displaystyle G_{1}$};
\draw (628,246) node [anchor=north west][inner sep=0.75pt]   [align=left] {$\displaystyle G_{2}$};
\draw (173,435) node [anchor=north west][inner sep=0.75pt]   [align=left] {$\displaystyle L( G_{1})$};
\draw (595,437) node [anchor=north west][inner sep=0.75pt]   [align=left] {$\displaystyle L( G_{2})$};

\end{tikzpicture}
}
\caption{A figure depicting the behavior of the defined $L\phi$. }
\end{figure}

However, notice that if $\phi(x) = \phi(z)$ then $L(\phi)(a) = L(\phi)(b)$, which means $L(\phi):L(G_1) \to L(G_2)$ is in fact only a 1 complex morphism. So the association $\phi \mapsto L(\phi)$ is well defined, and it is clear that it preserves composition, so we have defined a functor $L$ - but not exactly from the category of graphs to itself, but actually from the category of graphs with graph homomorphisms as morphism towards the category of graphs with 1-complex morphism as morphisms. 

We want to compose $L$ with the $\Omega$ functor, so we have to restrict our morphisms a little more.

\begin{proposition}
\label{linepropcharact}
    Let $\phi:G_1 \to G_2$ be a graph homomorphism. Then $L(\phi)$ is strongly proper if and only if $L(\phi)^{-1}(e)$ is finite for every edge $e \in \mathrm{E}(G_2)$. 
\end{proposition}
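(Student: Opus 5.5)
The direction ``$\Leftarrow$'' is immediate from \myref{strproperexamples}(iii). Since $\phi$ is a graph homomorphism, $L(\phi)$ is a 1-complex morphism $L(G_1)\to L(G_2)$. A 1-complex morphism all of whose fibres are finite is strongly proper. So the work is in ``$\Rightarrow$''. I would prove its contrapositive: if some fibre $U\doteq L(\phi)^{-1}(e)$ is infinite, then $U$ is not dispersed in $L(G_1)$. Concretely, I will exhibit a ray $r$ of $L(G_1)$ together with infinitely many pairwise vertex-disjoint paths from $r$ to $U$. No finite vertex set $F\subset \mathrm{V}(L(G_1))$ can then separate a tail of $r$ from $U$, because all but finitely many of these paths avoid $F$.

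Since $G_1$ is connected (as in all our categories), $L(G_1)$ is connected. I apply the star-comb lemma (\myref{star-comb}) to the infinite vertex set $U\subset \mathrm{V}(L(G_1))$.

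\emph{Comb case.} Suppose there is a comb with infinitely many teeth in $U$. Its spine is a ray $r$, and its teeth paths are infinitely many disjoint paths from $r$ to $U$. So $U$ is not dispersed, as explained above.

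\emph{Star case.} Suppose there is an infinite star in $L(G_1)$ with centre $c=\{w_1,w_2\}\in \mathrm{E}(G_1)$ and infinitely many tips in $U$. The second vertex of each leg is an edge of $G_1$ adjacent to $c$, and these second vertices are pairwise distinct. By pigeonhole, infinitely many of them share the same endpoint of $c$, say $w_1$. All edges of $G_1$ at $w_1$ form a clique in $L(G_1)$. Let $d_1,d_2,\dots$ be these infinitely many distinct second vertices. They are pairwise adjacent, so $r\doteq\sequence{d_1,d_2,\dots}$ is a ray in $L(G_1)$. Removing the centre $c$ from each of the corresponding legs leaves infinitely many pairwise disjoint paths, each starting at some $d_i\in r$ and ending in $U$. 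Hence $r$ adheres to $U$, and again $U$ is not dispersed.

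Degenerate situations need a remark. For instance, a tip could coincide with $d_i$, or a leg could have length $1$. Only finitely many legs have length $1$ since legs are disjoint except at $c$. In a length-$2$ leg the path from $d_i$ to $U$ is trivial, which is still fine. I would also note, for intuition, that the star case is exactly the one where a vertex $w_1$ of $G_1$ has infinite degree. In particular it covers the case where infinitely many edges of the fibre share an endpoint; then $U$ itself contains a clique, and hence a ray.

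The main obstacle I expect is precisely this star case. Dispersedness is defined only with respect to rays, so an infinite star in $L(G_1)$ does not immediately contradict it. The key observation is that a star in a line graph forces an infinite clique, hence a ray, right next to the centre. After that, the rest of the argument is bookkeeping of disjointness of the legs.
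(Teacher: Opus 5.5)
Your proof is correct. The easy direction is the same as in the paper: finite sets are dispersed. For the hard direction you take a different route.

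The paper applies the star-comb lemma in $G_1$ to the infinite set of endpoints of the fibre edges. This yields a ray, or a star with an infinite-degree centre, in $G_1$ that is infinitely edge-connected to those endpoints. The paper then passes to the corresponding end of $L(G_1)$. Deleting finitely many vertices of $L(G_1)$ means deleting finitely many edges of $G_1$, and this leaves the fibre meeting the component chosen by that end.

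You instead apply the star-comb lemma directly in $L(G_1)$ to the fibre itself. The comb case is then immediate. Your one new ingredient is that a star in a line graph forces an infinite clique at one endpoint of its centre, which turns the star into a comb.

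Your version stays in the graph where dispersedness is defined and produces explicit disjoint paths. It therefore avoids translating between edge-disjoint paths in $G_1$ and vertex-disjoint paths in $L(G_1)$. It also avoids attaching fibre edges to the endpoint vertices, a step the paper passes over. The paper's version has the advantage of identifying the offending end as an edge-direction of $G_1$ given by a ray or an infinite-degree vertex, which fits the rest of that section.

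You are right to invoke connectedness of $G_1$. Both arguments need it, and the implication is false without it. Take infinitely many disjoint copies of $K_2$ mapped onto a single edge. Then $L(G_1)$ is rayless, so $L(\phi)$ is strongly proper, yet the fibre is infinite.
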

\begin{proof}
($\impliedby$) Suppose $L(\phi)$ isn't strongly proper. Then, there is an $e \in E(G_2) = V(L(G_2))$ such that $L(\phi)^{-1}(e)$ is not dispersed. Since finite sets are dispersed, this means $L(\phi)^{-1}(e)$ must be infinite.\\

($\implies$) Suppose there is an edge $e \in E(G_2) = V(L(G_2))$ such that $L(\phi)^{-1}(e)$ is infinite. This implies that the set $U \subset V(G_1) $ of vertices incident to the edges in $L(\phi)^{-1}(e)$ must be infinite too. Using the Star-Comb Lemma in $G_1$, there is either a star infinitely edge-connected to $U$ or a ray infinitely edge-connected to $U$. Let $\rho \in \Omega(L(G_1))$ be the end associated to either that star or that ray. In either case, if we were to remove from $L(G_1)$ a set $F \subset V(L(G_1))$, that would amount to removing from $G_1$ finitely many edges. This still leaves the star or the ray edge-connected to $U$, therefore $L(\phi)^{-1}(e)$ intersects the connected component of $L(G_1) \setminus F$ chosen by $\rho$. This means $L(\phi)^{-1}(e)$ is not dispersed, and therefore $L(\phi)$ is not strongly proper. 

\end{proof}

\begin{figure}[ht]
    \centering

\tikzset{every picture/.style={line width=0.75pt}} 

\begin{tikzpicture}[x=0.75pt,y=0.75pt,yscale=-1,xscale=1,scale = 0.7]

\draw  [fill={rgb, 255:red, 126; green, 211; blue, 33 }  ,fill opacity=1 ] (132.16,126.7) .. controls (132.11,123.35) and (134.16,120.59) .. (136.72,120.56) .. controls (139.28,120.52) and (141.39,123.21) .. (141.44,126.57) .. controls (141.48,129.93) and (139.44,132.68) .. (136.88,132.72) .. controls (134.31,132.75) and (132.2,130.06) .. (132.16,126.7) -- cycle ;
\draw   (177.99,126.78) .. controls (177.95,123.42) and (179.99,120.67) .. (182.55,120.63) .. controls (185.12,120.59) and (187.23,123.29) .. (187.27,126.64) .. controls (187.32,130) and (185.28,132.75) .. (182.71,132.79) .. controls (180.15,132.83) and (178.04,130.14) .. (177.99,126.78) -- cycle ;
\draw    (141.44,126.57) -- (178,126.78) ;
\draw    (187.27,126.64) -- (223.83,126.85) ;
\draw  [fill={rgb, 255:red, 126; green, 211; blue, 33 }  ,fill opacity=1 ] (224.4,126.85) .. controls (224.35,123.49) and (226.4,120.74) .. (228.96,120.7) .. controls (231.52,120.66) and (233.63,123.35) .. (233.68,126.71) .. controls (233.72,130.07) and (231.68,132.82) .. (229.12,132.86) .. controls (226.55,132.89) and (224.44,130.2) .. (224.4,126.85) -- cycle ;
\draw   (270.23,126.92) .. controls (270.19,123.56) and (272.23,120.81) .. (274.79,120.77) .. controls (277.36,120.74) and (279.47,123.43) .. (279.51,126.78) .. controls (279.56,130.14) and (277.52,132.89) .. (274.95,132.93) .. controls (272.39,132.97) and (270.28,130.28) .. (270.23,126.92) -- cycle ;
\draw    (233.68,126.71) -- (270.24,126.92) ;
\draw    (279.51,126.78) -- (316.07,126.99) ;
\draw  [fill={rgb, 255:red, 126; green, 211; blue, 33 }  ,fill opacity=1 ] (315.51,127) .. controls (315.46,123.65) and (317.5,120.89) .. (320.07,120.86) .. controls (322.63,120.82) and (324.74,123.51) .. (324.79,126.87) .. controls (324.83,130.22) and (322.79,132.98) .. (320.23,133.01) .. controls (317.66,133.05) and (315.55,130.36) .. (315.51,127) -- cycle ;
\draw   (361.34,127.08) .. controls (361.3,123.72) and (363.34,120.97) .. (365.9,120.93) .. controls (368.47,120.89) and (370.58,123.58) .. (370.62,126.94) .. controls (370.67,130.3) and (368.63,133.05) .. (366.06,133.09) .. controls (363.5,133.13) and (361.39,130.44) .. (361.34,127.08) -- cycle ;
\draw    (324.79,126.87) -- (361.34,127.08) ;
\draw    (136.72,120.56) .. controls (163.6,108.3) and (208.31,108.39) .. (228.96,120.7) ;
\draw    (228.96,120.7) .. controls (255.84,108.44) and (300.55,108.53) .. (321.2,120.84) ;
\draw    (321.2,120.84) .. controls (348.08,108.58) and (392.79,108.68) .. (413.44,120.98) ;
\draw  [fill={rgb, 255:red, 126; green, 211; blue, 33 }  ,fill opacity=1 ] (416.35,126.71) .. controls (416.4,130.07) and (414.36,132.82) .. (411.79,132.86) .. controls (409.23,132.89) and (407.12,130.2) .. (407.07,126.85) .. controls (407.03,123.49) and (409.07,120.74) .. (411.64,120.7) .. controls (414.2,120.66) and (416.31,123.35) .. (416.35,126.71) -- cycle ;
\draw    (407.08,126.84) -- (370.52,126.64) ;
\draw    (416.35,126.71) -- (449.2,127.63) ;
\draw [shift={(451.2,127.69)}, rotate = 181.6] [color={rgb, 255:red, 0; green, 0; blue, 0 }  ][line width=0.75pt]    (10.93,-3.29) .. controls (6.95,-1.4) and (3.31,-0.3) .. (0,0) .. controls (3.31,0.3) and (6.95,1.4) .. (10.93,3.29)   ;
\draw  [fill={rgb, 255:red, 217; green, 6; blue, 6 }  ,fill opacity=1 ] (177.78,110.18) .. controls (177.74,106.82) and (179.78,104.07) .. (182.34,104.03) .. controls (184.91,103.99) and (187.02,106.68) .. (187.06,110.04) .. controls (187.11,113.4) and (185.07,116.15) .. (182.5,116.19) .. controls (179.94,116.23) and (177.83,113.54) .. (177.78,110.18) -- cycle ;
\draw  [fill={rgb, 255:red, 232; green, 16; blue, 16 }  ,fill opacity=1 ] (270.43,108.24) .. controls (270.39,104.88) and (272.43,102.13) .. (274.99,102.09) .. controls (277.55,102.05) and (279.67,104.74) .. (279.71,108.1) .. controls (279.75,111.46) and (277.71,114.21) .. (275.15,114.25) .. controls (272.59,114.29) and (270.47,111.6) .. (270.43,108.24) -- cycle ;
\draw  [fill={rgb, 255:red, 235; green, 15; blue, 15 }  ,fill opacity=1 ] (360.98,109.88) .. controls (360.93,106.52) and (362.98,103.77) .. (365.54,103.73) .. controls (368.1,103.69) and (370.21,106.38) .. (370.26,109.74) .. controls (370.3,113.1) and (368.26,115.85) .. (365.7,115.89) .. controls (363.13,115.93) and (361.02,113.23) .. (360.98,109.88) -- cycle ;
\draw  [fill={rgb, 255:red, 126; green, 211; blue, 33 }  ,fill opacity=1 ] (132.07,229.72) .. controls (132.02,226.36) and (134.07,223.61) .. (136.63,223.57) .. controls (139.19,223.54) and (141.3,226.23) .. (141.35,229.59) .. controls (141.39,232.94) and (139.35,235.69) .. (136.79,235.73) .. controls (134.23,235.77) and (132.11,233.08) .. (132.07,229.72) -- cycle ;
\draw   (177.91,229.8) .. controls (177.86,226.44) and (179.9,223.69) .. (182.47,223.65) .. controls (185.03,223.61) and (187.14,226.3) .. (187.19,229.66) .. controls (187.23,233.02) and (185.19,235.77) .. (182.63,235.81) .. controls (180.06,235.84) and (177.95,233.15) .. (177.91,229.8) -- cycle ;
\draw    (141.35,229.59) -- (177.91,229.8) ;
\draw    (187.18,229.66) -- (223.74,229.87) ;
\draw  [fill={rgb, 255:red, 126; green, 211; blue, 33 }  ,fill opacity=1 ] (224.31,229.86) .. controls (224.26,226.5) and (226.31,223.75) .. (228.87,223.72) .. controls (231.43,223.68) and (233.54,226.37) .. (233.59,229.73) .. controls (233.63,233.08) and (231.59,235.84) .. (229.03,235.87) .. controls (226.47,235.91) and (224.35,233.22) .. (224.31,229.86) -- cycle ;
\draw   (270.15,229.94) .. controls (270.1,226.58) and (272.14,223.83) .. (274.71,223.79) .. controls (277.27,223.75) and (279.38,226.44) .. (279.43,229.8) .. controls (279.47,233.16) and (277.43,235.91) .. (274.87,235.95) .. controls (272.3,235.99) and (270.19,233.29) .. (270.15,229.94) -- cycle ;
\draw    (233.59,229.73) -- (270.15,229.94) ;
\draw    (279.42,229.8) -- (315.98,230.01) ;
\draw  [fill={rgb, 255:red, 126; green, 211; blue, 33 }  ,fill opacity=1 ] (315.42,230.02) .. controls (315.37,226.66) and (317.41,223.91) .. (319.98,223.87) .. controls (322.54,223.84) and (324.65,226.53) .. (324.7,229.88) .. controls (324.74,233.24) and (322.7,235.99) .. (320.14,236.03) .. controls (317.57,236.07) and (315.46,233.38) .. (315.42,230.02) -- cycle ;
\draw   (361.25,230.09) .. controls (361.21,226.74) and (363.25,223.98) .. (365.81,223.95) .. controls (368.38,223.91) and (370.49,226.6) .. (370.53,229.96) .. controls (370.58,233.32) and (368.54,236.07) .. (365.97,236.11) .. controls (363.41,236.14) and (361.3,233.45) .. (361.25,230.09) -- cycle ;
\draw    (324.7,229.88) -- (361.25,230.09) ;
\draw  [fill={rgb, 255:red, 126; green, 211; blue, 33 }  ,fill opacity=1 ] (416.27,229.73) .. controls (416.31,233.08) and (414.27,235.84) .. (411.71,235.87) .. controls (409.14,235.91) and (407.03,233.22) .. (406.99,229.86) .. controls (406.94,226.5) and (408.98,223.75) .. (411.55,223.71) .. controls (414.11,223.68) and (416.22,226.37) .. (416.27,229.73) -- cycle ;
\draw    (406.99,229.86) -- (370.43,229.65) ;
\draw    (416.27,229.73) -- (449.11,230.65) ;
\draw [shift={(451.11,230.7)}, rotate = 181.6] [color={rgb, 255:red, 0; green, 0; blue, 0 }  ][line width=0.75pt]    (10.93,-3.29) .. controls (6.95,-1.4) and (3.31,-0.3) .. (0,0) .. controls (3.31,0.3) and (6.95,1.4) .. (10.93,3.29)   ;
\draw  [fill={rgb, 255:red, 217; green, 6; blue, 6 }  ,fill opacity=1 ] (285.84,283.47) .. controls (285.79,280.11) and (287.84,277.36) .. (290.4,277.32) .. controls (292.96,277.28) and (295.07,279.97) .. (295.12,283.33) .. controls (295.16,286.69) and (293.12,289.44) .. (290.56,289.48) .. controls (287.99,289.52) and (285.88,286.82) .. (285.84,283.47) -- cycle ;
\draw    (295.16,150.98) -- (295.12,208.99) ;
\draw [shift={(295.12,210.99)}, rotate = 270.03] [color={rgb, 255:red, 0; green, 0; blue, 0 }  ][line width=0.75pt]    (10.93,-3.29) .. controls (6.95,-1.4) and (3.31,-0.3) .. (0,0) .. controls (3.31,0.3) and (6.95,1.4) .. (10.93,3.29)   ;
\draw    (136.79,235.73) -- (290.4,277.32) ;
\draw    (229.03,235.87) -- (290.4,277.32) ;
\draw    (320.14,236.03) -- (290.4,277.32) ;
\draw    (411.71,235.87) -- (290.4,277.32) ;

\draw (303.34,162.25) node [anchor=north west][inner sep=0.75pt]  [rotate=-359.08] [align=left] {$\displaystyle \phi $};

\end{tikzpicture}
\caption{\myref{linepropcharact} does not not imply that the fibers through $\phi$ must be finite. Consider the quotient collapsing the red vertices in the figure. Notice every edge still has a finite fiber, even though this collapsed infinitely many vertices.}
\end{figure}

\begin{definition}[Strongly line-proper]
A graph homomorphism $\phi:G_1 \rightarrow G_2$ is called \textbf{strongly line-proper} when one of the equivalent conditions of \myref{linepropcharact} occurs: $L(\phi)$ is strongly proper or $\phi^{-1}(e)$ is finite for every edge $e \in E(G_2)$.
\end{definition}

The strongly line-proper morphisms are the class of morphisms we will use. 

\begin{example}[A morphism that is not strongly line-proper]
\label{notstrongLP}
Consider the morphism described by the following picture, where red/green/black/blue/yellow vertices get taken to the unique red/green/black/blue/yellow vertex of the second graph; the white vertices of the ray on top get taken sequentially to the white vertices of the ray on top of the second graph, and the white vertices from the ray on the bottom get taken sequentially to the white vertices of the ray on the bottom.

\begin{figure}[ht]
    \centering
    
\tikzset{every picture/.style={line width=0.75pt}} 

\begin{tikzpicture}[x=0.75pt,y=0.75pt,yscale=-1,xscale=1,scale=0.5]

\draw  [fill={rgb, 255:red, 126; green, 211; blue, 33 }  ,fill opacity=1 ] (8.44,142.42) .. controls (8.43,138.65) and (11.34,135.57) .. (14.93,135.57) .. controls (18.53,135.56) and (21.45,138.61) .. (21.46,142.39) .. controls (21.47,146.17) and (18.56,149.24) .. (14.97,149.25) .. controls (11.37,149.26) and (8.45,146.2) .. (8.44,142.42) -- cycle ;
\draw   (72.74,143.09) .. controls (72.73,139.31) and (75.64,136.24) .. (79.24,136.23) .. controls (82.83,136.23) and (85.75,139.28) .. (85.76,143.06) .. controls (85.77,146.84) and (82.86,149.91) .. (79.27,149.92) .. controls (75.67,149.93) and (72.75,146.87) .. (72.74,143.09) -- cycle ;
\draw    (21.46,142.39) -- (72.74,143.09) ;
\draw    (85.76,143.06) -- (137.04,143.76) ;
\draw  [fill={rgb, 255:red, 126; green, 211; blue, 33 }  ,fill opacity=1 ] (137.84,143.76) .. controls (137.83,139.98) and (140.74,136.91) .. (144.33,136.9) .. controls (147.93,136.89) and (150.85,139.95) .. (150.86,143.73) .. controls (150.87,147.5) and (147.96,150.57) .. (144.36,150.58) .. controls (140.77,150.59) and (137.85,147.54) .. (137.84,143.76) -- cycle ;
\draw   (202.14,144.43) .. controls (202.13,140.65) and (205.04,137.58) .. (208.63,137.57) .. controls (212.23,137.56) and (215.15,140.62) .. (215.16,144.39) .. controls (215.17,148.17) and (212.26,151.24) .. (208.66,151.25) .. controls (205.07,151.26) and (202.15,148.21) .. (202.14,144.43) -- cycle ;
\draw    (150.86,143.73) -- (202.14,144.43) ;
\draw    (215.16,144.4) -- (266.44,145.1) ;
\draw  [fill={rgb, 255:red, 126; green, 211; blue, 33 }  ,fill opacity=1 ] (265.65,145.1) .. controls (265.64,141.32) and (268.55,138.25) .. (272.14,138.24) .. controls (275.74,138.23) and (278.66,141.29) .. (278.67,145.07) .. controls (278.68,148.84) and (275.77,151.91) .. (272.17,151.92) .. controls (268.58,151.93) and (265.66,148.88) .. (265.65,145.1) -- cycle ;
\draw   (329.95,145.77) .. controls (329.94,141.99) and (332.85,138.92) .. (336.44,138.91) .. controls (340.04,138.9) and (342.96,141.96) .. (342.97,145.74) .. controls (342.98,149.51) and (340.07,152.58) .. (336.47,152.59) .. controls (332.88,152.6) and (329.96,149.55) .. (329.95,145.77) -- cycle ;
\draw    (278.67,145.07) -- (329.95,145.77) ;
\draw    (14.93,135.57) .. controls (52.85,122.12) and (115.56,122.79) .. (144.33,136.9) ;
\draw    (144.33,136.9) .. controls (182.25,123.46) and (244.96,124.13) .. (273.73,138.24) ;
\draw    (273.73,138.24) .. controls (311.64,124.79) and (374.36,125.47) .. (403.13,139.57) ;
\draw  [fill={rgb, 255:red, 74; green, 144; blue, 226 }  ,fill opacity=1 ] (405.13,294.06) .. controls (405.14,297.84) and (402.23,300.91) .. (398.63,300.92) .. controls (395.04,300.92) and (392.12,297.87) .. (392.11,294.09) .. controls (392.1,290.31) and (395.01,287.24) .. (398.6,287.23) .. controls (402.2,287.22) and (405.12,290.28) .. (405.13,294.06) -- cycle ;
\draw   (340.83,293.39) .. controls (340.83,297.17) and (337.93,300.24) .. (334.33,300.25) .. controls (330.74,300.26) and (327.82,297.2) .. (327.81,293.42) .. controls (327.8,289.64) and (330.71,286.57) .. (334.3,286.56) .. controls (337.9,286.55) and (340.82,289.61) .. (340.83,293.39) -- cycle ;
\draw    (392.11,294.09) -- (340.83,293.39) ;
\draw    (327.81,293.42) -- (276.52,292.72) ;
\draw  [fill={rgb, 255:red, 74; green, 144; blue, 226 }  ,fill opacity=1 ] (275.73,292.72) .. controls (275.74,296.5) and (272.83,299.57) .. (269.24,299.58) .. controls (265.64,299.59) and (262.72,296.53) .. (262.71,292.75) .. controls (262.7,288.98) and (265.61,285.91) .. (269.21,285.9) .. controls (272.8,285.89) and (275.72,288.94) .. (275.73,292.72) -- cycle ;
\draw   (211.43,292.05) .. controls (211.44,295.83) and (208.53,298.9) .. (204.93,298.91) .. controls (201.34,298.92) and (198.42,295.86) .. (198.41,292.09) .. controls (198.4,288.31) and (201.31,285.24) .. (204.9,285.23) .. controls (208.5,285.22) and (211.42,288.27) .. (211.43,292.05) -- cycle ;
\draw    (262.71,292.75) -- (211.43,292.05) ;
\draw    (198.41,292.09) -- (147.13,291.38) ;
\draw  [fill={rgb, 255:red, 74; green, 144; blue, 226 }  ,fill opacity=1 ] (147.92,291.38) .. controls (147.93,295.16) and (145.02,298.23) .. (141.43,298.24) .. controls (137.83,298.25) and (134.91,295.19) .. (134.9,291.41) .. controls (134.89,287.64) and (137.8,284.57) .. (141.4,284.56) .. controls (144.99,284.55) and (147.91,287.6) .. (147.92,291.38) -- cycle ;
\draw   (83.62,290.71) .. controls (83.63,294.49) and (80.72,297.56) .. (77.12,297.57) .. controls (73.53,297.58) and (70.61,294.52) .. (70.6,290.75) .. controls (70.59,286.97) and (73.5,283.9) .. (77.09,283.89) .. controls (80.69,283.88) and (83.61,286.93) .. (83.62,290.71) -- cycle ;
\draw    (134.9,291.41) -- (83.62,290.71) ;
\draw    (405.13,294.06) -- (452,295.54) ;
\draw [shift={(454,295.6)}, rotate = 181.81] [color={rgb, 255:red, 0; green, 0; blue, 0 }  ][line width=0.75]    (10.93,-3.29) .. controls (6.95,-1.4) and (3.31,-0.3) .. (0,0) .. controls (3.31,0.3) and (6.95,1.4) .. (10.93,3.29)   ;
\draw    (398.63,300.91) .. controls (360.72,314.36) and (298.01,313.69) .. (269.24,299.58) ;
\draw    (269.24,299.58) .. controls (231.32,313.02) and (168.61,312.35) .. (139.84,298.24) ;
\draw    (139.84,298.24) .. controls (101.92,311.69) and (39.21,311.02) .. (10.44,296.91) ;
\draw    (19.32,290.04) -- (70.6,290.75) ;
\draw  [fill={rgb, 255:red, 74; green, 144; blue, 226 }  ,fill opacity=1 ] (6.3,290.08) .. controls (6.29,286.3) and (9.2,283.23) .. (12.79,283.22) .. controls (16.39,283.21) and (19.31,286.26) .. (19.32,290.04) .. controls (19.32,293.82) and (16.42,296.89) .. (12.82,296.9) .. controls (9.23,296.91) and (6.31,293.86) .. (6.3,290.08) -- cycle ;
\draw  [fill={rgb, 255:red, 126; green, 211; blue, 33 }  ,fill opacity=1 ] (407.13,146.06) .. controls (407.14,149.84) and (404.23,152.91) .. (400.63,152.92) .. controls (397.04,152.92) and (394.12,149.87) .. (394.11,146.09) .. controls (394.1,142.31) and (397.01,139.24) .. (400.6,139.23) .. controls (404.2,139.22) and (407.12,142.28) .. (407.13,146.06) -- cycle ;
\draw    (394.11,146.09) -- (342.83,145.39) ;
\draw    (982.13,97.06) -- (1029,98.54) ;
\draw [shift={(1031,98.6)}, rotate = 181.81] [color={rgb, 255:red, 0; green, 0; blue, 0 }  ][line width=0.75]    (10.93,-3.29) .. controls (6.95,-1.4) and (3.31,-0.3) .. (0,0) .. controls (3.31,0.3) and (6.95,1.4) .. (10.93,3.29)   ;
\draw  [fill={rgb, 255:red, 11; green, 1; blue, 1 }  ,fill opacity=1 ] (218.13,216.06) .. controls (218.14,219.84) and (215.23,222.91) .. (211.63,222.92) .. controls (208.04,222.92) and (205.12,219.87) .. (205.11,216.09) .. controls (205.1,212.31) and (208.01,209.24) .. (211.6,209.23) .. controls (215.2,209.22) and (218.12,212.28) .. (218.13,216.06) -- cycle ;
\draw    (14.97,149.25) -- (211.6,209.23) ;
\draw    (144.36,150.58) -- (211.6,209.23) ;
\draw    (272.17,151.92) -- (211.6,209.23) ;
\draw    (400.63,152.92) -- (211.6,209.23) ;
\draw    (12.79,283.22) -- (211.63,222.92) ;
\draw    (211.63,222.92) -- (146,281.6) ;
\draw    (211.63,222.92) -- (269.21,285.9) ;
\draw    (211.63,222.92) -- (398.6,287.23) ;
\draw   (583.44,93.42) .. controls (583.43,89.65) and (586.34,86.57) .. (589.93,86.57) .. controls (593.53,86.56) and (596.45,89.61) .. (596.46,93.39) .. controls (596.47,97.17) and (593.56,100.24) .. (589.97,100.25) .. controls (586.37,100.26) and (583.45,97.2) .. (583.44,93.42) -- cycle ;
\draw   (647.74,94.09) .. controls (647.73,90.31) and (650.64,87.24) .. (654.24,87.23) .. controls (657.83,87.23) and (660.75,90.28) .. (660.76,94.06) .. controls (660.77,97.84) and (657.86,100.91) .. (654.27,100.92) .. controls (650.67,100.93) and (647.75,97.87) .. (647.74,94.09) -- cycle ;
\draw    (596.46,93.39) -- (647.74,94.09) ;
\draw    (660.76,94.06) -- (712.04,94.76) ;
\draw   (712.84,94.76) .. controls (712.83,90.98) and (715.74,87.91) .. (719.33,87.9) .. controls (722.93,87.89) and (725.85,90.95) .. (725.86,94.73) .. controls (725.87,98.5) and (722.96,101.57) .. (719.36,101.58) .. controls (715.77,101.59) and (712.85,98.54) .. (712.84,94.76) -- cycle ;
\draw   (777.14,95.43) .. controls (777.13,91.65) and (780.04,88.58) .. (783.63,88.57) .. controls (787.23,88.56) and (790.15,91.62) .. (790.16,95.39) .. controls (790.17,99.17) and (787.26,102.24) .. (783.66,102.25) .. controls (780.07,102.26) and (777.15,99.21) .. (777.14,95.43) -- cycle ;
\draw    (725.86,94.73) -- (777.14,95.43) ;
\draw    (790.16,95.4) -- (841.44,96.1) ;
\draw   (840.65,96.1) .. controls (840.64,92.32) and (843.55,89.25) .. (847.14,89.24) .. controls (850.74,89.23) and (853.66,92.29) .. (853.67,96.07) .. controls (853.68,99.84) and (850.77,102.91) .. (847.17,102.92) .. controls (843.58,102.93) and (840.66,99.88) .. (840.65,96.1) -- cycle ;
\draw   (904.95,96.77) .. controls (904.94,92.99) and (907.85,89.92) .. (911.44,89.91) .. controls (915.04,89.9) and (917.96,92.96) .. (917.97,96.74) .. controls (917.98,100.51) and (915.07,103.58) .. (911.47,103.59) .. controls (907.88,103.6) and (904.96,100.55) .. (904.95,96.77) -- cycle ;
\draw    (853.67,96.07) -- (904.95,96.77) ;
\draw   (982.13,97.06) .. controls (982.14,100.84) and (979.23,103.91) .. (975.63,103.92) .. controls (972.04,103.92) and (969.12,100.87) .. (969.11,97.09) .. controls (969.1,93.31) and (972.01,90.24) .. (975.6,90.23) .. controls (979.2,90.22) and (982.12,93.28) .. (982.13,97.06) -- cycle ;
\draw    (969.11,97.09) -- (917.83,96.39) ;
\draw  [fill={rgb, 255:red, 126; green, 211; blue, 33 }  ,fill opacity=1 ] (796.13,167.06) .. controls (796.14,170.84) and (793.23,173.91) .. (789.63,173.92) .. controls (786.04,173.92) and (783.12,170.87) .. (783.11,167.09) .. controls (783.1,163.31) and (786.01,160.24) .. (789.6,160.23) .. controls (793.2,160.22) and (796.12,163.28) .. (796.13,167.06) -- cycle ;
\draw    (592.97,100.25) -- (789.6,160.23) ;
\draw    (722.36,101.58) -- (789.6,160.23) ;
\draw    (850.17,102.92) -- (789.6,160.23) ;
\draw    (978.63,103.92) -- (789.6,160.23) ;
\draw    (654.27,100.92) -- (789.6,160.23) ;
\draw    (783.66,102.25) -- (789.6,160.23) ;
\draw    (911.47,103.59) -- (789.6,160.23) ;
\draw    (407.13,146.06) -- (454,147.54) ;
\draw [shift={(456,147.6)}, rotate = 181.81] [color={rgb, 255:red, 0; green, 0; blue, 0 }  ][line width=0.75]    (10.93,-3.29) .. controls (6.95,-1.4) and (3.31,-0.3) .. (0,0) .. controls (3.31,0.3) and (6.95,1.4) .. (10.93,3.29)   ;
\draw  [fill={rgb, 255:red, 217; green, 6; blue, 6 }  ,fill opacity=1 ] (72.71,124.41) .. controls (72.7,120.63) and (75.61,117.56) .. (79.21,117.55) .. controls (82.8,117.54) and (85.72,120.6) .. (85.73,124.38) .. controls (85.74,128.15) and (82.83,131.23) .. (79.24,131.23) .. controls (75.64,131.24) and (72.72,128.19) .. (72.71,124.41) -- cycle ;
\draw  [fill={rgb, 255:red, 232; green, 16; blue, 16 }  ,fill opacity=1 ] (202.71,123.41) .. controls (202.7,119.63) and (205.61,116.56) .. (209.21,116.55) .. controls (212.8,116.54) and (215.72,119.6) .. (215.73,123.38) .. controls (215.74,127.15) and (212.83,130.23) .. (209.24,130.23) .. controls (205.64,130.24) and (202.72,127.19) .. (202.71,123.41) -- cycle ;
\draw  [fill={rgb, 255:red, 235; green, 15; blue, 15 }  ,fill opacity=1 ] (329.71,126.41) .. controls (329.7,122.63) and (332.61,119.56) .. (336.21,119.55) .. controls (339.8,119.54) and (342.72,122.6) .. (342.73,126.38) .. controls (342.74,130.15) and (339.83,133.23) .. (336.24,133.23) .. controls (332.64,133.24) and (329.72,130.19) .. (329.71,126.41) -- cycle ;
\draw  [fill={rgb, 255:red, 245; green, 166; blue, 35 }  ,fill opacity=1 ] (68.71,310.41) .. controls (68.7,306.63) and (71.61,303.56) .. (75.21,303.55) .. controls (78.8,303.54) and (81.72,306.6) .. (81.73,310.38) .. controls (81.74,314.15) and (78.83,317.23) .. (75.24,317.23) .. controls (71.64,317.24) and (68.72,314.19) .. (68.71,310.41) -- cycle ;
\draw  [fill={rgb, 255:red, 245; green, 166; blue, 35 }  ,fill opacity=1 ] (198.71,311.41) .. controls (198.7,307.63) and (201.61,304.56) .. (205.21,304.55) .. controls (208.8,304.54) and (211.72,307.6) .. (211.73,311.38) .. controls (211.74,315.15) and (208.83,318.23) .. (205.24,318.23) .. controls (201.64,318.24) and (198.72,315.19) .. (198.71,311.41) -- cycle ;
\draw  [fill={rgb, 255:red, 245; green, 166; blue, 35 }  ,fill opacity=1 ] (327.71,312.41) .. controls (327.7,308.63) and (330.61,305.56) .. (334.21,305.55) .. controls (337.8,305.54) and (340.72,308.6) .. (340.73,312.38) .. controls (340.74,316.15) and (337.83,319.23) .. (334.24,319.23) .. controls (330.64,319.24) and (327.72,316.19) .. (327.71,312.41) -- cycle ;
\draw  [fill={rgb, 255:red, 208; green, 2; blue, 27 }  ,fill opacity=1 ] (719.47,173.11) .. controls (719.46,169.33) and (722.37,166.26) .. (725.97,166.25) .. controls (729.56,166.24) and (732.48,169.29) .. (732.49,173.07) .. controls (732.5,176.85) and (729.59,179.92) .. (726,179.93) .. controls (722.4,179.94) and (719.48,176.88) .. (719.47,173.11) -- cycle ;
\draw  [fill={rgb, 255:red, 11; green, 1; blue, 1 }  ,fill opacity=1 ] (797.13,219.06) .. controls (797.14,222.84) and (794.23,225.91) .. (790.63,225.92) .. controls (787.04,225.92) and (784.12,222.87) .. (784.11,219.09) .. controls (784.1,215.31) and (787.01,212.24) .. (790.6,212.23) .. controls (794.2,212.22) and (797.12,215.28) .. (797.13,219.06) -- cycle ;
\draw    (789.63,173.92) -- (790.6,212.23) ;
\draw    (994.6,335.06) -- (1037.2,335.58) ;
\draw [shift={(1039.2,335.6)}, rotate = 180.7] [color={rgb, 255:red, 0; green, 0; blue, 0 }  ][line width=0.75]    (10.93,-3.29) .. controls (6.95,-1.4) and (3.31,-0.3) .. (0,0) .. controls (3.31,0.3) and (6.95,1.4) .. (10.93,3.29)   ;
\draw   (994.6,335.06) .. controls (994.61,338.84) and (991.7,341.91) .. (988.11,341.92) .. controls (984.51,341.92) and (981.59,338.87) .. (981.58,335.09) .. controls (981.57,331.31) and (984.48,328.24) .. (988.07,328.23) .. controls (991.67,328.22) and (994.59,331.28) .. (994.6,335.06) -- cycle ;
\draw   (930.3,334.39) .. controls (930.31,338.17) and (927.4,341.24) .. (923.8,341.25) .. controls (920.21,341.26) and (917.29,338.2) .. (917.28,334.42) .. controls (917.27,330.64) and (920.18,327.57) .. (923.77,327.56) .. controls (927.37,327.55) and (930.29,330.61) .. (930.3,334.39) -- cycle ;
\draw    (981.58,335.09) -- (930.3,334.39) ;
\draw    (917.28,334.42) -- (866,333.72) ;
\draw   (865.2,333.72) .. controls (865.21,337.5) and (862.3,340.57) .. (858.71,340.58) .. controls (855.11,340.59) and (852.19,337.53) .. (852.18,333.75) .. controls (852.17,329.98) and (855.08,326.91) .. (858.68,326.9) .. controls (862.27,326.89) and (865.19,329.94) .. (865.2,333.72) -- cycle ;
\draw   (800.9,333.05) .. controls (800.91,336.83) and (798,339.9) .. (794.41,339.91) .. controls (790.81,339.92) and (787.89,336.86) .. (787.88,333.09) .. controls (787.87,329.31) and (790.78,326.24) .. (794.38,326.23) .. controls (797.97,326.22) and (800.89,329.27) .. (800.9,333.05) -- cycle ;
\draw    (852.18,333.75) -- (800.9,333.05) ;
\draw    (787.88,333.09) -- (736.6,332.38) ;
\draw   (737.39,332.38) .. controls (737.4,336.16) and (734.49,339.23) .. (730.9,339.24) .. controls (727.3,339.25) and (724.38,336.19) .. (724.37,332.41) .. controls (724.36,328.64) and (727.27,325.57) .. (730.87,325.56) .. controls (734.46,325.55) and (737.38,328.6) .. (737.39,332.38) -- cycle ;
\draw   (673.09,331.71) .. controls (673.1,335.49) and (670.19,338.56) .. (666.6,338.57) .. controls (663,338.58) and (660.08,335.52) .. (660.07,331.75) .. controls (660.06,327.97) and (662.97,324.9) .. (666.57,324.89) .. controls (670.16,324.88) and (673.08,327.93) .. (673.09,331.71) -- cycle ;
\draw    (724.37,332.41) -- (673.09,331.71) ;
\draw   (595.91,331.42) .. controls (595.9,327.65) and (598.81,324.57) .. (602.41,324.57) .. controls (606,324.56) and (608.92,327.61) .. (608.93,331.39) .. controls (608.94,335.17) and (606.03,338.24) .. (602.44,338.25) .. controls (598.84,338.26) and (595.92,335.2) .. (595.91,331.42) -- cycle ;
\draw    (608.93,331.39) -- (660.21,332.09) ;
\draw    (985.07,328.23) -- (788.44,268.25) ;
\draw    (855.68,326.9) -- (788.44,268.25) ;
\draw    (727.87,325.56) -- (788.44,268.25) ;
\draw    (599.41,324.57) -- (788.44,268.25) ;
\draw    (923.77,327.56) -- (788.44,268.25) ;
\draw    (794.38,326.23) -- (788.44,268.25) ;
\draw    (666.57,324.89) -- (788.44,268.25) ;
\draw  [fill={rgb, 255:red, 245; green, 166; blue, 35 }  ,fill opacity=1 ] (864.57,242.37) .. controls (864.58,246.15) and (861.67,249.22) .. (858.07,249.23) .. controls (854.48,249.24) and (851.56,246.19) .. (851.55,242.41) .. controls (851.54,238.63) and (854.45,235.56) .. (858.04,235.55) .. controls (861.64,235.54) and (864.56,238.6) .. (864.57,242.37) -- cycle ;
\draw  [fill={rgb, 255:red, 74; green, 144; blue, 226 }  ,fill opacity=1 ] (794.95,268.23) .. controls (794.96,272.01) and (792.05,275.08) .. (788.45,275.09) .. controls (784.86,275.1) and (781.94,272.04) .. (781.93,268.26) .. controls (781.92,264.49) and (784.83,261.42) .. (788.42,261.41) .. controls (792.02,261.4) and (794.94,264.45) .. (794.95,268.23) -- cycle ;
\draw    (790.63,225.92) -- (788.44,268.25) ;
\draw    (789.6,160.23) .. controls (765.2,167.6) and (756.2,167.6) .. (732.49,173.07) ;
\draw    (467.2,227.6) -- (629.2,231.55) ;
\draw [shift={(631.2,231.6)}, rotate = 181.4] [color={rgb, 255:red, 0; green, 0; blue, 0 }  ][line width=0.75]    (10.93,-3.29) .. controls (6.95,-1.4) and (3.31,-0.3) .. (0,0) .. controls (3.31,0.3) and (6.95,1.4) .. (10.93,3.29)   ;
\draw    (79.27,149.92) .. controls (116.5,170.27) and (180.2,163.6) .. (208.66,151.25) ;
\draw    (207.08,151.26) .. controls (244.31,171.61) and (308.01,164.94) .. (336.47,152.59) ;
\draw    (77.09,283.89) .. controls (115.01,270.44) and (177.72,271.12) .. (206.49,285.22) ;
\draw    (204.9,285.23) .. controls (242.82,271.78) and (305.53,272.46) .. (334.3,286.56) ;
\draw    (334.3,286.56) .. controls (372.21,273.12) and (434.93,273.79) .. (463.7,287.9) ;
\draw    (336.47,152.59) .. controls (373.71,172.95) and (437.41,166.28) .. (465.87,153.93) ;
\draw    (403.13,139.57) .. controls (441.04,126.13) and (503.76,126.8) .. (532.53,140.91) ;
\draw    (398.63,300.91) .. controls (435.87,321.27) and (499.57,314.6) .. (528.03,302.25) ;
\draw    (788.42,261.41) -- (851.55,242.41) ;

\draw (538,201) node [anchor=north west][inner sep=0.75pt]   [align=left] {$\displaystyle \phi $};

\end{tikzpicture}
\end{figure}

Note the edge joining together the black and green vertex on the second graph and the edge joining together the black and blue vertex on the second graph. Those edges get mapped into an infinite number of times, so $\phi$ cannot  be strongly line-proper. Notice that the infinite degree black vertex gets mapped into a finite degree black vertex - and that $L(G_1)$ has a single end, and $L(G_2)$ has two ends. So the (unique) end of $L(G_1)$ associated with the infinite degree black vertex has no natural candidate in $\Omega(L(G_2))$ to be mapped into: the map $L(\phi)$ cannot induce a continuous mapping between $\Omega(L(G_1))$ and $\Omega(L(G_2))$.

\end{example}

The union of finite sets is still a finite set, so the composition of strongly line-proper morphisms is still strongly line-proper. So we can conclude

\begin{theorem}[The line-proper category]
There is a category $\catname{Graph}_{LP}$ with objects as connected graphs and morphisms as the strongly line-proper morphisms. There is a functor $L: \catname{Graph}_{LP} \rightarrow \catname{Graph}_{SP}$ that maps graphs $G$ to their line graphs $L(G)$ and strongly line-proper morphisms $\phi: G_1 \rightarrow G_2$ to strongly proper morphisms $L(\phi): L(G_1) \rightarrow L(G_2)$, such that $L(\phi)(\{u,v\}) = \{\phi(u), \phi(v)\}$.
\end{theorem}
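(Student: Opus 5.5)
To prove the theorem we must check two things. First, the strongly line-proper morphisms between connected graphs form a category. Second, $L$ is a functor into $\catname{Graph}_{SP}$. Throughout, the working definition is that a graph homomorphism $\phi$ is strongly line-proper iff the edge-fibres $L(\phi)^{-1}(e)$ are finite for every $e \in \mathrm{E}(G_2)$. By \myref{linepropcharact}, this is equivalent to $L(\phi)$ being strongly proper.

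\emph{Category structure.} The identity $\id_G$ is a graph homomorphism, and $L(\id_G)=\id_{L(G)}$ has singleton fibres, so it is strongly line-proper. Now let $\phi:G_1\to G_2$ and $\psi:G_2\to G_3$ be strongly line-proper. Their composite $\psi\circ\phi$ is again a graph homomorphism, since adjacent vertices go to adjacent vertices twice. Next we check $L(\psi\circ\phi)=L(\psi)\circ L(\phi)$ directly on edges: both send $\{u,v\}$ to $\{\psi\phi(u),\psi\phi(v)\}$. This is a genuine edge of $G_3$ because $\psi\circ\phi$ is a homomorphism. Hence for $e\in \mathrm{E}(G_3)$,
\[
L(\psi\circ\phi)^{-1}(e)=\bigcup_{e'\in L(\psi)^{-1}(e)} L(\phi)^{-1}(e').
\]
This is a finite union of finite sets, so $\psi\circ\phi$ is strongly line-proper. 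Associativity and the unit laws are inherited from composition of functions.

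\emph{Functoriality of $L$.} Connectedness of $L(G)$ for connected $G$ is needed for $L(G)$ to be an object of $\catname{Graph}_{SP}$. Any two edges of $G$ are joined by a path in $G$, and the consecutive edges along that path are adjacent in $L(G)$. There is one degenerate case: if $G$ has no edges, $L(G)$ is empty. Either we tacitly exclude this, or we allow the empty graph; I would remark on it rather than dwell on it.

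We next check that $L(\phi)$ is a morphism of $\catname{Graph}_{SP}$, meaning a strongly proper 1-complex morphism. The paper already observed that $L(\phi)$ is a 1-complex morphism. Indeed, if $a=\{x,y\}$ and $b=\{y,z\}$ are adjacent in $L(G_1)$, their images share the vertex $\phi(y)$, so they are either adjacent in $L(G_2)$ or equal. Strong properness then follows from \myref{linepropcharact}. Alternatively, we can get it directly: finite fibres are dispersed, which is item (iii) of \myref{strproperexamples}. Preservation of identities and of composition was verified above.

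The only step that carries real content is the equivalence in \myref{linepropcharact}, and we may assume it. So there is no substantial obstacle. The point needing the most care is simply that composites are still graph homomorphisms, which is what makes $L(\psi\circ\phi)$ well defined on edges. Without this, $L$ of a mere 1-complex morphism would collapse edges to vertices and not be defined at all.
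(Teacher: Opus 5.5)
Your proposal is correct and is essentially the paper's argument: $\psi\circ\phi$ is strongly line-proper because $L(\psi\circ\phi)^{-1}(e)$ is a finite union of finite edge-fibres, $L(\phi)$ is a 1-complex morphism, and finite fibres make $L(\phi)$ strongly proper. For the last step only the easy direction of \myref{linepropcharact} is needed, which is item (iii) of \myref{strproperexamples}. You also spell out details the paper leaves implicit: $L(\id_G)=\id_{L(G)}$, $L(\psi\circ\phi)=L(\psi)\circ L(\phi)$, and connectedness of $L(G)$, including your caveat about edgeless graphs.
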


\subsection{Making \texorpdfstring{$\mathbb{\cutalgebrasymb}$}{Lg} into a functor}

Any map $\phi:G_1 \to G_2$ is characterized by a map between the sets of vertices of each of the graphs. So there is a natural Boolean algebra homomorphism $\mathcal{P}(\phi):\mathcal{P}({\mathrm{V}(G_2)}) \to \mathcal{P}({\mathrm{V}(G_2)})$ defined by $\mathcal{P}(\phi)(A) \doteq \phi^{-1}(A)$. We want to know when this induced homomorphism takes finite cuts to finite cuts. Reminding the reader, to define $\cutalgebra{G}$ we first start with the sets that form finite cuts \[\precut{G} \doteq \{A \subset \mathrm{V}(G) \st E(A,G \backslash A) \text{ is finite }\}\]

and then we take the quotient $\cutalgebra{G} \doteq \precut{G}/I_G$ where $I_G$ is the ideal of $\precut{G}$ consisting of finite sets of finite degree vertices of $G$.

\begin{definition}[Cut-proper morphisms]
    We say a graph homomorphism $\phi:G_1 \to G_2$ is \textbf{cut-proper} when $\phi^{-1}:\cutalgebrasymb(G_2) \to \cutalgebrasymb(G_1)$ is well-defined. These will be precisely the graph homomorphisms that satisfy
    \begin{itemize}
        \item[(i)] If $A \subset V(G_2)$ is connected to its complement $G_2 \setminus A$ via finitely many edges, then $\phi^{-1}(A)$ is connected to its complement $\phi^{-1}(G_2 \setminus A)$ via finitely many edges.
        \item[(ii)] If $v \in V(G_2)$ is of finite degree, then $\phi^{-1}(v)$ is a finite set of vertices of finite degree. This is so that $\phi^{-1}$ maps $I_{G_2}$ into $I_{G_1}$ and thus $\phi^{-1}$  is well defined in the quotient.

    \end{itemize}
\end{definition}


Thankfully, we don't need to restrict our morphism class any further - the strongly line-proper morphisms we described last time will do.\\
\begin{proposition}[Strongly line-proper implies cut-proper]
    Let $G_1,G_2$ be graphs, $G_1$ connected and $\phi:G_1 \to G_2$ a graph homomorphism. If $\phi$ is strongly line-proper, then it is cut-proper.
\end{proposition}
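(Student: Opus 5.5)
The plan is to check the two conditions in the definition of cut-proper morphisms directly. Throughout, I use two facts. First, strong line-properness means $L(\phi)^{-1}(e)$ is finite for every $e \in \mathrm{E}(G_2)$. Second, since $\phi$ is a graph homomorphism (not merely a 1-complex morphism), every edge $\{x,y\} \in \mathrm{E}(G_1)$ is sent to a genuine edge $\{\phi(x),\phi(y)\}$ of $G_2$, with $\phi(x) \neq \phi(y)$. Once (i) and (ii) hold, the map $A \mapsto \phi^{-1}(A)$ preserves unions, intersections, complements and symmetric differences. It therefore descends to a Boolean homomorphism $\cutalgebra{G_2} \to \cutalgebra{G_1}$, because it sends $I_{G_2}$ into $I_{G_1}$.

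For (i), let $A \in \precut{G_2}$, so $E(A, G_2 \setminus A)$ is finite. Take any edge $\{x,y\} \in E(\phi^{-1}(A), G_1 \setminus \phi^{-1}(A))$ with $x \in \phi^{-1}(A)$. Then $\phi(x) \in A$ and $\phi(y) \notin A$, so $L(\phi)(\{x,y\}) \in E(A, G_2 \setminus A)$. Hence
\[
E(\phi^{-1}(A), G_1 \setminus \phi^{-1}(A)) \subset \bigcup_{e \in E(A, G_2\setminus A)} L(\phi)^{-1}(e),
\]
which is a finite union of finite sets and therefore finite. So $\phi^{-1}(A) \in \precut{G_1}$.

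For (ii), let $v \in \mathrm{V}(G_2)$ have finite degree, and let $E_v$ be the finite set of edges incident to $v$. For $x \in \phi^{-1}(v)$, each edge at $x$ maps to an edge containing $\phi(x) = v$, so all edges at $x$ lie in the finite set $\bigcup_{e \in E_v} L(\phi)^{-1}(e)$. In particular $\deg x$ is finite.

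To see that $\phi^{-1}(v)$ itself is finite, I use the connectedness of $G_1$. If $G_1$ is a single vertex the claim is trivial. Otherwise every $x \in \phi^{-1}(v)$ is incident to at least one edge, and by the previous paragraph that edge lies in $\bigcup_{e \in E_v} L(\phi)^{-1}(e)$. Each edge has only two endpoints, so
\[
|\phi^{-1}(v)| \le 2 \sum_{e \in E_v} |L(\phi)^{-1}(e)| < \infty.
\]
Now take any $J \in I_{G_2}$, a finite set of finite-degree vertices. Then $\phi^{-1}(J)$ is a finite union of such fibres, so it is a finite set of finite-degree vertices, i.e.\ it lies in $I_{G_1}$.

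The only delicate point I expect is this finiteness of the vertex fibre in (ii). The edge-fibre hypothesis controls edges, not vertices, and it is exactly here that connectedness of $G_1$ is needed: it rules out infinitely many isolated vertices of $G_1$ mapping onto $v$.
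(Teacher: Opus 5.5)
Your proof is correct and follows essentially the same route as the paper. For (i), the cut edges of $\phi^{-1}(A)$ lie in $L(\phi)^{-1}$ of the finite cut. For (ii), every edge at a vertex of $\phi^{-1}(v)$ lies in $L(\phi)^{-1}$ of the finite edge set at $v$, and connectedness of $G_1$ then gives finiteness of the fibre; your explicit bound $|\phi^{-1}(v)| \le 2\sum_{e} |L(\phi)^{-1}(e)|$ and your check that $\phi^{-1}$ descends to the quotient make it slightly more careful than the paper's version.
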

\begin{proof}
Take $A \subset V(G_2)$ connected to its complement via finitely many edges and let's call those edges $F \subset E(G_2)$. Any edge $\{x,y\}$ between $\phi^{-1}(A)$ and its complement must be mapped by $\phi$ into an edge $\{\phi(x), \phi(y)\} \in F$. Therefore the set of edges $F' \subset E(G_1)$ connecting $\phi^{-1}(A)$ and its complement must be such that $\ F' \subset L(\phi)^{-1}(F)$ - which must be a finite set, due to $\phi$ being strongly line-proper.\\
Take $v \in V(G_2)$ of finite degree. Call $N \subset E(G_2)$ the finite set of edges incident to $v$. A vertex $w \in V(G_1)$ mapped into $v$ by $\phi$ must be such that any edge incident to it gets mapped into $N$. Therefore, all the edges incident to $\phi^{-1}(v)$ must be in $L(\phi)^{-1}(N)$ - a finite set, due to $\phi$ being strongly line-proper. Therefore $\phi(v)$ contains only finite degree vertices, and a finite number of those - since $G_1$ is connected and therefore every vertex must have at least one edge incident to it.
\end{proof}

\myref{notstrongLP} provides an example of a map that is not strongly line-proper, and it is also not cut-proper: the pre-image of the cut induced by the black vertex on the second graph is not a finite cut. It would be very nice if these two classes of morphisms were equivalent - but they are not.

\begin{example}[A cut-proper morphism that is not strongly line-proper]
    Let $r$ be the graph consisting of a single ray $\sequence{w_1,w_2, w_3, \dots}$ and $H$ be the graph consisting of two vertices $x,y$ joined by a single edge $e = \{x,y\}$ and by infinitely many disjoint paths $\{\sequence{x,v_i,y}\}_{i \in \mathbb{N}}$. Consider the homomorphism $\phi:r \to H$ that maps $w_i$ to the $i$-th element of the following sequence \[\sequence{x,v_1,y,x,v_2,y,x,v_3,y,\dots}\]

    \begin{itemize}
        \item[(i)] \textbf{This morphism is cut-proper:} First note that a set $A \subset V(H)$ is only able to be in $\precut{H}$ if it or its complement consists of a finite number of the $v_i$. This implies that $\precut{H}$ is the union of $I_H$ and its complement, and so the quotient $\cutalgebra{H}$ is the trivial Boolean algebra $\{0,1\}$. The fact that the $\phi^{-1}$ of a finite number of $v_i$ is a finite number of $w_i$ is then enough for us to conclude that morphism will be cut-proper.
        \item[(ii)] \textbf{This morphism is not strongly line-proper:} The edge $e$ joining $x$ and $y$ gets mapped into by an infinite number of edges in the ray - this means that the set $\phi^{-1}(e)$ is not line-dispersed, since the edge-direction corresponding to the ray cannot be separated from $\phi^{-1}(e)$ by finitely many edges.
    \end{itemize}
\end{example}

It's clear that the composition of cut-proper morphisms is still a cut proper morphism. So, we can conclude, in summary

\begin{theorem}[The cut-proper category]
    There is a category $\catname{Graph}_{CP}$ with objects as the connected graphs and morphisms as the cut-proper morphisms. There is a (contravariant) functor  $\cutalgebrasymb: \catname{Graph}_{CP} \rightarrow \catname{Bool}$ taking a connected graph $G$ to its cut algebra $\cutalgebra{G}$ and a cut-proper morphism $\phi:G_1 \rightarrow G_2$ to a Boolean algebra morphism $\cutalgebrasymb(\phi): \cutalgebra{G_2} \rightarrow \cutalgebra{G_1}$ such that given an $A \in \cutalgebra{G_2}$ we have $\cutalgebra{\phi}(A) = \phi^{-1}(A) \in \cutalgebra{G_1}$.\\
    The same functor $\cutalgebrasymb$ is also well defined from the category $\catname{Graph}_{LP}$ of graphs with strongly line-proper morphisms to the category of Boolean algebras $\catname{Bool}$.
\end{theorem}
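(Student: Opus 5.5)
The plan is to check the usual three items for a category and a contravariant functor, using only the definition of cut-proper morphisms and the preceding proposition. For the category $\catname{Graph}_{CP}$, the identity of a connected graph is cut-proper, since $\id^{-1}(A)=A$ and $\id^{-1}(v)=\{v\}$. Composition is handled through conditions (i) and (ii) separately. Let $\phi:G_1\to G_2$ and $\psi:G_2\to G_3$ be cut-proper and let $A\in\precut{G_3}$. Then $\psi^{-1}(A)\in\precut{G_2}$ by (i) for $\psi$, and so
\[(\psi\circ\phi)^{-1}(A)=\phi^{-1}(\psi^{-1}(A))\in\precut{G_1}\]
by (i) for $\phi$. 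For (ii), take $v\in V(G_3)$ of finite degree. Then $\psi^{-1}(v)$ is a finite set $\{w_1,\dots,w_k\}$ of finite degree vertices, and $(\psi\circ\phi)^{-1}(v)=\bigcup_i \phi^{-1}(w_i)$ is a finite union of finite sets of finite degree vertices. Associativity of composition is inherited from composition of functions.

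Next I would show that $\cutalgebrasymb(\phi)$ is a well-defined Boolean homomorphism. On $\precut{G_2}$, the map $A\mapsto\phi^{-1}(A)$ lands in $\precut{G_1}$ by (i). It preserves unions, intersections and complements, since preimages commute with all set operations; in particular $\phi^{-1}(V(G_2)\setminus A)=V(G_1)\setminus\phi^{-1}(A)$. To pass to the quotients, I would check that $\phi^{-1}[I_{G_2}]\subset I_{G_1}$. An element of $I_{G_2}$ is a finite set $\{v_1,\dots,v_n\}$ of finite degree vertices, and its preimage is $\bigcup_i\phi^{-1}(v_i)$, which lies in $I_{G_1}$ by (ii). If $A\,\Delta\,B\in I_{G_2}$, then
\[\phi^{-1}(A)\,\Delta\,\phi^{-1}(B)=\phi^{-1}(A\,\Delta\,B)\in I_{G_1},\]
so $\phi^{-1}$ respects the equivalence and descends to a homomorphism $\cutalgebra{G_2}\to\cutalgebra{G_1}$. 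Equivalently, the composite $\pi_{I_{G_1}}\circ\phi^{-1}$ kills $I_{G_2}$ and therefore factors through $\pi_{I_{G_2}}$.

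Functoriality is then immediate on representatives. We have $\cutalgebrasymb(\id_G)=\id$, and
\[\cutalgebrasymb(\psi\circ\phi)(A)=\phi^{-1}(\psi^{-1}(A))=\cutalgebrasymb(\phi)\bigl(\cutalgebrasymb(\psi)(A)\bigr),\]
which is exactly contravariance.

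For the final sentence of the statement, I would invoke the previous proposition: every strongly line-proper morphism between connected graphs is cut-proper. Identities are strongly line-proper, since their edge fibers are singletons, and $\catname{Graph}_{LP}$ is closed under composition, as was noted when it was defined. Hence $\catname{Graph}_{LP}$ is a subcategory of $\catname{Graph}_{CP}$ with the same objects, and restricting $\cutalgebrasymb$ along this inclusion gives the required functor.

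The only point needing any care is the descent to the quotient. One has to confirm that $\phi^{-1}$ maps the ideal into the ideal, which is precisely what (ii) supplies, and that $\phi^{-1}$ commutes with symmetric difference. Everything else is routine bookkeeping with preimages.
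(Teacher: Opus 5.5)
Your proposal is correct and takes the same approach as the paper. The paper defines cut-proper morphisms as exactly those for which $\phi^{-1}$ descends to $\cutalgebra{G_2}\to\cutalgebra{G_1}$, remarks that these are closed under composition, and obtains the $\catname{Graph}_{LP}$ part from the preceding proposition that strongly line-proper implies cut-proper; you simply write out the routine checks the paper leaves implicit (identities, composing conditions (i) and (ii), the descent to the quotient, and contravariance).
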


\subsection{Putting it all together: the edge-direction functor \texorpdfstring{$\mathcal{D}_E$}{Lg}}

With what we have so far, we can build two maps from the class of strongly line proper morphisms to the class of continuous maps: $\mathrm{Ult} \circ \cutalgebrasymb$ and $\Omega \circ L$. Let's prove that these are in fact the same. Remember that, from \myref{object-commute}, for any graph $G$ we have $\mathrm{Ult} \circ \cutalgebra{G} \cong \Omega \circ L (G) \cong \mathcal{D}_E(G)$. We will switch freely between describing elements of this space as ultrafilters, equivalence class of rays or edge-directions. Also remember that, regardless of the description, all of them are characterized by either an infinite degree vertex $v \in V(G)$ or a ray $r \subset V(G)$.

\begin{proposition}
Let $\phi: G_1 \rightarrow G_2$ be a strongly line-proper morphism. Then $\mathrm{Ult} \circ \cutalgebrasymb (\phi)= \Omega \circ L (\phi)$. Call this map $\mathcal{D}_E(\phi): \mathcal{D}_E(G_1) \rightarrow \mathcal{D}_E(G_2)$. Explicitly, $\mathcal{D}_E(\phi)$ maps a direction $\rho_v \in \mathcal{D}_E(G_1)$ given by an infinite degree vertex $v \in V(G_1)$ to the direction $\rho_{\phi(v)} \in \mathcal{D}_E(G_2)$ given by the infinite degree vertex $\phi(v) \in V(G_2)$; and it maps a direction $\rho_r \in \mathcal{D}_E(G_1)$ given by a ray $r \subset V(G_1)$ to the unique direction $\rho_{\phi(r)} \in \mathcal{D}_E(G_2)$ that always picks a connected component intersecting $\phi(r) \subset V(G_2)$.
\end{proposition}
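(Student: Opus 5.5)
We compare the two maps pointwise, using the identifications $\mathrm{Ult}(\cutalgebra{G}) \approx \mathcal{D}_E(G) \approx \Omega(L(G))$ of \myref{directionhomeo} and \myref{linetheorem}, together with the dichotomy of \myref{directionrepresentation}. The plan has three steps. First, compute $\mathrm{Ult}(\cutalgebrasymb(\phi))$ explicitly on the two kinds of directions. Second, compute $\Omega(L(\phi))$ explicitly, using the description in \myref{end-spacefunc}. Third, check that both formulas give the direction described in the statement. Since an edge-direction $\rho \in \mathcal{D}_E(G_2)$ is determined by its values $\rho(F)$ on finite edge-sets $F$, it suffices in each case to show that, for every finite $F \subset E(G_2)$, both images pick the same component of $G_2 \setminus F$.

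\emph{The Boolean side.} By construction, $\mathrm{Ult}(\cutalgebrasymb\phi)(u) = \{A \in \cutalgebra{G_2} \st \phi^{-1}(A) \in u\}$. Transported through $\rho_\bullet$ of \myref{directionhomeo}, the image direction picks, for each finite $F$, the unique component $C$ of $G_2 \setminus F$ with $\phi^{-1}(C) \in u$.

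\begin{itemize}
\item If $u = u_v$ is principal at an infinite degree vertex $v$, then $\phi^{-1}(C) \ni v$ exactly when $\phi(v) \in C$. Hence the image picks the component containing $\phi(v)$.
\item If $u = u_r$ for a ray $r$, then $\phi^{-1}(C)$ contains a tail of $r$ exactly when $\phi$ maps a tail of $r$ into $C$. Since $\phi(r)$ is a walk meeting only finitely many edges of $F$ infinitely often, we need to show that a tail of $\phi(r)$ stays in one component. Strong line-properness gives that $L(\phi)^{-1}(F)$ is finite, so a tail of $r$ avoids all edges mapped into $F$. That tail is therefore mapped into a single component of $G_2 \setminus F$, which is the picked one.
\end{itemize}

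One point must be checked here. Translating the image ultrafilter back via \myref{directionrepresentation}, it need not be the $\rho_{\phi(v)}$ of a \emph{non-dominating} infinite degree vertex. Indeed $\phi(v)$ might dominate a ray, or might even have finite degree. The statement's $\rho_{\phi(v)}$ should be read as ``the direction picking the component containing $\phi(v)$''. I would argue that this is indeed an element of $\mathcal{D}_E(G_2)$, meaning it picks infinite components. This follows because $v$ has infinite degree and $\phi$ is strongly line-proper: infinitely many edges at $v$ map to edges at $\phi(v)$, each with finite fibre, so $\phi(v)$ has infinite degree. It is then a point of the direction space, possibly represented by a ray it dominates.

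\emph{The line-graph side.} Under $\mathcal{D}_E(G) \approx \Omega(L(G))$, a direction $\rho$ corresponds to the end of $L(G)$ whose component in $L(G) \setminus F$ consists of the edges of $\rho(F)$. This correspondence comes from the vertex–edge dictionary underlying \myref{linetheorem}.

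\begin{itemize}
\item For a ray $r$, its edges form a ray of $L(G_1)$. By \myref{end-spacefunc}, $\Omega(L\phi)$ sends it to the unique end not finitely separable from $L(\phi)(E(r))$, which is the edge-set of the walk $\phi(r)$. For each finite $F$ this end therefore picks the component containing a tail of $\phi(r)$, matching the Boolean computation.
\item For an infinite degree vertex $v$, the end of $L(G_1)$ is represented by a ray inside the clique of edges at $v$, or more generally by a ray of $L(G_1)$ adhering to that clique. Its image under $L(\phi)$ adheres to the edges at $\phi(v)$, so for each $F$ it lies in the component containing $\phi(v)$.
\end{itemize}

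\emph{Main obstacle.} I expect the hardest step to be this vertex case on the line-graph side. It requires making the homeomorphism of \myref{linetheorem} explicit enough to identify which end of $L(G)$ corresponds to $\rho_v$. One must then verify that the image of a representing ray of $L(G_1)$ indeed lives, after removing any finite $F$, in the component determined by $\phi(v)$. I would first try to choose the representing ray of $L(G_1)$ as a sequence of distinct edges at $v$ joined consecutively through $v$, which is legitimate since these edges are pairwise adjacent in $L(G_1)$. Its image then consists of edges at $\phi(v)$, and only finitely many of them lie in $F$ by finiteness of fibres. Combined with the uniqueness clause of \myref{end-spacefunc}, this would conclude equality of the two maps.
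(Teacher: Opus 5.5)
Your proposal is correct and takes essentially the same route as the paper. It splits via \myref{directionrepresentation} into vertex-type and ray-type directions, computes $\mathrm{Ult}(\cutalgebrasymb\phi)$ as the pullback ultrafilter and $\Omega(L\phi)$ via the uniqueness clause of \myref{end-spacefunc}, and compares the component of $G_2\setminus F$ picked for each finite $F$. You are somewhat more careful than the paper: you use finiteness of $L(\phi)^{-1}(F)$ to show a tail of $\phi(r)$ avoids $F$, you get infinite degree of $\phi(v)$ directly from finite edge-fibres rather than via cut-properness, and you correctly read $\rho_{\phi(v)}$ as the direction picking the component of $\phi(v)$, since $\phi(v)$ may dominate a ray.
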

\begin{proof}
Take $\rho \in \mathcal{D}_E(G_1)$ given by an infinite degree vertex $v \in V(G_1)$. Since $\phi$ is strongly line-proper, it must be cut-proper, so it must send $v$ to a $\phi(w) \in V(G_2)$ that is also of infinite degree. The set of edges adjacent to $v$, call it $r_v \subset E(G_1) = V(L(G_1))$ is a ray that represents the end $\rho$. By \myref{end-spacefunc}, $\Omega(L(\phi))(\rho)$ must be the unique end that cannot be finitely separated via vertices of $L(G_2)$ from $L(\phi)(r_v)$. This implies that the edge-direction induced by $\Omega(L(\phi))(\rho)$ is the one that always picks the vertex $\phi(v)$ - in other words, $\Omega(L(\phi))(\rho) = \rho_{\phi(v)}$.\\
On the ultrafilter side, the direction $\rho$ is represented by $u_v$, the principal ultrafilter induced by $v$. So we will have $\mathrm{Ult} \circ \cutalgebrasymb (\phi)(u_v) = \{A \in \cutalgebra{G_2} \st \phi^{-1}(A) \in u_v\} = \{A \in \cutalgebra{G_2} \st \phi(v) \in A \}= u_{\phi(v)}$, which is the ultrafilter corresponding to the direction $\rho_{\phi(v)}$. So, indeed, $\Omega(L(\phi))(\rho) = \mathrm{Ult} \circ \cutalgebrasymb (\phi)(\rho)$.\\
Now suppose $\rho \in \mathcal{D}_E(G_1)$ is given by a ray $r \subset V(G_1)$. Again by \myref{end-spacefunc}, $\Omega(L(\phi))(\rho)$ must be the unique end that cannot be finitely separated via vertices of $L(G_2)$ from $L(\phi)(r)$. Any direction in $\mathcal{D}_E(G_2)$ that always picks a connected component that intersects $\phi(r)$ must correspond to an end in $\Omega(L(\phi))(G_2)$ that cannot be finitely separated via vertices of $L(G_2)$ from $L(\phi)(r)$. So there is only one direction that always picks a connected component that intersects $\phi(r)$, we can call it $\rho_{\phi(r)}$ and we have $\Omega(L(\phi))(\rho) = \rho_{\phi(r)}$.\\
Call $u_r = \{A \in \cutalgebra{G_1} \st r \subset A\}$ the ultrafilter corresponding to the direction $\rho$. Note that, since $\cutalgebra{G_1}$ is a quotient, the inclusion as we have used is `modulo' a finite number of finite degree vertices - since $r$ is infinite, this ultrafilter is well defined. Now, $\mathrm{Ult} \circ \cutalgebrasymb (\phi)(u_r) = \{A \in \cutalgebra{G_2} \st \phi^{-1}(A) \in u_r\} = \{A \in \cutalgebra{G_2} \st \phi(r) \subset A \}$. This ultrafilter always intersects $\phi(r)$, so it defines a direction that always picks a connected component intersecting $\phi(r)$ - so it corresponds to $\rho_{\phi(r)}$, finalizing the proof that $\Omega(L(\phi))(\rho) = \mathrm{Ult} \circ \cutalgebrasymb (\phi)(\rho)$.

\end{proof}

In summary then: we have built, in two ways, a functor $\mathcal{D}_E$. We can define a category $\catname{Graph}_{CP}$ with objects as connected graphs, and morphisms being the cut-proper morphisms. Here we can define the functor $\cutalgebrasymb$ and therefore the functor $\mathcal{D}_E = \mathrm{Ult} \circ \cutalgebrasymb: \catname{Graph}_{CP} \rightarrow \catname{Top}$ is also well-defined.

\begin{center}
\begin{tikzcd}
\catname{Graph}_{CP} \arrow[d, "\cutalgebrasymb"'] \arrow[rrd, "\mathcal{D}_E"] &  &               \\
 \catname{Bool}	 \arrow[rr, "\mathrm{Ult}"']                                    &  & \catname{Top}
\end{tikzcd}

\end{center}
We can also define a category $\catname{Graph}_{LP}$ with objects as connected graphs, and morphisms being the strongly line-proper morphisms. Here we can define both $\cutalgebrasymb$ and $L$. $L$ has its image in the category $\catname{Graph}_{SP}$ with objects as connected graphs and morphisms being the strongly proper morphisms - where $\Omega$ is well defined. So we have both compositions $ \mathcal{D}_E =\Omega \circ L = \mathrm{Ult} \circ \cutalgebrasymb: \catname{Graph}_{LP} \rightarrow \catname{Top}$, which means, finally, a proper commuting diagram. 

\begin{center}
\begin{tikzcd}
\catname{Graph}_{LP} \arrow[d, "\cutalgebrasymb"'] \arrow[rrd, "\mathcal{D}_E"] \arrow[rr, "L"] &  & \catname{Graph}_{SP} \arrow[d, "\Omega"] \\
 \catname{Bool}	 \arrow[rr, "\mathrm{Ult}"']                                                    &  & \catname{Top}                           
\end{tikzcd}

\end{center}

    \section{The tangle space functor} 

We could use similar ideas to make the association of a graph $G$ to its tangle space $\Theta(G)$ into a functor. First, we turn $\sepalgebrasymb$ into a functor in the same way did for the cut algebra.

\begin{definition}[Separation-proper morphisms]
    We say a graph homomorphism $\phi:G_1 \to G_2$ is \textbf{separation-proper} when $\phi^{-1}:\sepalgebrasymb(G_2) \to \sepalgebrasymb(G_1)$ is well-defined. These will be precisely the graph homomorphisms that satisfy
    \begin{itemize}
        \item[(i)] If $A \subset E(G_2)$ is connected to its complement $G_2 \setminus A$ via finitely many vertices, then $L(\phi)^{-1}(A)$ is connected to its complement $L(\phi)^{-1}(G_2 \setminus A)$ via finitely many vertices.
        \item[(ii)] For an $e \in E(G_2)$, then $L(\phi)^{-1}(e)$ is a finite set of edges.
    \end{itemize}
\end{definition}

This class of morphism is closed by composition, so it is already good enough to define a category and for us to have a well defined functor $\sepalgebrasymb$, and therefore also its composition $\Theta = \mathrm{Ult} \circ \sepalgebrasymb$. Now, since $\Omega(G)$ is homeomorphically contained in $\Theta(G)$, it is natural to think of the association of a $G$ to $\Theta(G)$ as an extension of the association of $G$ to $\Omega(G)$. However, the strongly proper class of morphisms we used to define the functor $\Omega(G)$ does not match with the class of separation-proper morphisms.

\begin{example}
    \begin{itemize}
        \item[(i)] A strongly proper morphism that is not separation-proper: Take an infinite star $G_1$ with center $c$ and tips $n \in \mathbb{N}$, and $G_2$ a graph with two vertices $a,b$ connected by an edge $e$. Define $\phi: G_1 \rightarrow G_2$ by mapping $c$ to $a$ and every $n$ to $b$. Then $L(\phi)^{-1}(e)$ is infinite, so $\phi$ is not separation-proper. Since $G_1$ has no rays, $\phi$ is strongly proper.
        \item[(ii)] A separation-proper morphism that is not strongly proper: Take $G_1$ to be a ray $<v_1,v_2, v_3, \dots>$ where each $v_i$ is connected to a single other vertex $w_i$ that is not connected to any other vertex. Take $G_2$ to be a star with center $c$ and tips $n \in \mathbb{N}$ and connect with an edge each $n$ to $n +1$. Define $\phi: G_1 \rightarrow G_2$ to be the map that takes each $v_n$ to $n$ and all the $w_i$ to $c$. This map is not strongly proper since $\phi^{-1}(c)$ adheres to a ray. It is separation-proper due to it being injective on the edges and the fact that none of the $w_i$ can connect two different edges.
    \end{itemize}
\end{example}

We can find a nice class of morphisms by copying the strongly line-proper definition.

\begin{definition}[Tangle-proper]
A graph homomorphism $\phi:G_1 \rightarrow G_2$ is called \textbf{tangle-proper} when  $\phi^{-1}(v)$ is finite for every vertex $v \in V(G_2)$.
\end{definition}

\begin{proposition}
    Take a graph homomorphism $\phi:G_1 \rightarrow G_2$. If $\phi$ is tangle-proper, then it is separation-proper and strongly proper.
\end{proposition}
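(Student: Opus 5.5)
The proof splits into two independent parts, one for each conclusion. Throughout we use that $\phi$ is a graph homomorphism whose vertex fibres $\phi^{-1}(v)$ are all finite.

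\emph{Strongly proper.} Every fibre $\phi^{-1}(v)$ is finite, and finite vertex sets are dispersed. Indeed, given a ray $r$ and a finite $U$, take $F = U$ itself. Then the component of $G_1 \setminus F$ containing a tail of $r$ is disjoint from $U$. This is exactly case (iii) of \myref{strproperexamples}, so this half is immediate.

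\emph{Separation-proper, condition (ii).} Fix $e=\{x,y\}\in E(G_2)$. Every edge of $L(\phi)^{-1}(e)$ has both endpoints in $\phi^{-1}(x)\cup\phi^{-1}(y)$, which is a finite set. A finite vertex set spans only finitely many edges, so $L(\phi)^{-1}(e)$ is finite. Applied to finite sets of edges, this also shows that $L(\phi)^{-1}$ sends the ideal $F_{G_2}$ of finite edge sets into $F_{G_1}$. Hence $L(\phi)^{-1}$ descends to the quotients, and it is automatically a Boolean homomorphism because preimages commute with unions, intersections and complements.

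\emph{Separation-proper, condition (i).} This is the step needing an actual argument. Let $A\in\presep{G_2}$ and let $S\defeq V(A)\cap V(E(G_2)\setminus A)$ be its finite separator. Put $A'\defeq L(\phi)^{-1}(A)$, so that $E(G_1)\setminus A' = L(\phi)^{-1}(E(G_2)\setminus A)$. Take a vertex $w\in V(A')\cap V(E(G_1)\setminus A')$. Then $w$ is incident to an edge $f_1$ with $L(\phi)(f_1)\in A$ and to an edge $f_2$ with $L(\phi)(f_2)\notin A$. The images $L(\phi)(f_1)$ and $L(\phi)(f_2)$ both contain $\phi(w)$, since $\phi$ is a homomorphism. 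So $\phi(w)$ lies in $V(A)\cap V(E(G_2)\setminus A) = S$. Therefore the separator of $A'$ is contained in $\phi^{-1}(S)$, a finite union of finite fibres, and so $A'\in\presep{G_1}$.

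The only mildly delicate point is bookkeeping: the definition of separation-proper phrases (i) in terms of edge sets "connected to their complement via finitely many vertices". I would make explicit that this means finiteness of $V(A)\cap V(E\setminus A)$, matching the definition of $\presep{G}$. With that identification, combining (i) and (ii) gives that $\phi^{-1}:\separationalgebra{G_2}\to\separationalgebra{G_1}$ is well defined, which proves the proposition.
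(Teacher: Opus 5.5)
Your proof is correct and follows the paper's argument. Finite fibres are dispersed, which gives strong properness; the separator of $L(\phi)^{-1}(A)$ lies in $\phi^{-1}$ of the separator of $A$; and $L(\phi)^{-1}(e)$ is finite because its edges have endpoints in the finite set $\phi^{-1}(x)\cup\phi^{-1}(y)$. You also justify the separator inclusion and the descent to the quotient by $F_{G_2}$, both of which the paper only asserts.
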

\begin{proof}
It is strongly proper due to finite sets being dispersed. For any $A \subset E(G_2)$, define $F = V(A, E(G_2) \setminus A)$, and then we'll have that $V(L(\phi)^{-1}(A), E(G_1) \setminus L(\phi)^{-1}(A)) \subset \phi^{-1}(F)$. If $F$ is finite, then $\phi^{-1}(F)$ is finite, so this shows $\phi^{-1}$ preserves finite separations. Also, for an $e \in E(G_2)$, $L(\phi)^{-1}(e)$ must be finite because if it wasn't, then one of the vertices incident in $e$ would be mapped into an infinite number of times.
\end{proof}

Of course, the composition of tangle-proper morphisms is still tangle-proper, so we can define a category $\catname{Graph}_{TP}$. In it, $\sepalgebrasymb$ is a well defined functor, and so we can define the functor $\Theta = \mathrm{Ult} \circ \sepalgebrasymb$. We can also define the $\Omega$ functor in this category, as we have described in the previous section.

\begin{center}
\begin{tikzcd}
\catname{Graph}_{TP} \arrow[d, "\sepalgebrasymb"'] \arrow[rrd, "\Theta"] \arrow[rrd, "\Omega", bend left=49] &  &               \\
 \catname{Bool}	 \arrow[rr, "\mathrm{Ult}"']                                                                 &  & \catname{Top}
\end{tikzcd}
\end{center}

\section{Final remarks and questions}
We hope our work will provide foundation and inspiration for further work on infinite graph
theory that makes ample use of category theory and powerful algebraic constructions such as Boolean algebras and Stone duality. Here are some questions regarding the constructions we presented here.

\begin{question}
Is there a more natural class of morphisms to be used in defining the $\Theta$ functor?
\end{question}
\begin{question}
What is the class of Boolean algebras that can be characterized as separation algebras or cut algebras of a graph? Equivalently, what is the class of topological spaces that are tangle spaces or edge-direction spaces?
\end{question}
\begin{question}
Is there a similarly algebraic or categorical construction of tangles for finite graphs?
\end{question}
\begin{question}
Can we apply our functors to uncover topological obstructions to the existence of minors or quotients of a graph?
\end{question}

    \section*{Acknowledgments}
    We thank CAPES for the financial support. 
    \bibliographystyle{alpha}
    \bibliography{./bibliography}
    \Addresses
    \typeout{get arXiv to do 4 passes: Label(s) may have changed. Rerun}

\end{document}